\numberwithin{equation}{section}
\numberwithin{figure}{section}
\theoremstyle{plain}
\newtheorem{thm}{\protect\theoremname}
  \theoremstyle{remark}
    \newtheorem*{nota}{Notation}
  \newtheorem{rem}[thm]{\protect\remarkname}
  \theoremstyle{plain}
  \newtheorem{lem}[thm]{\protect\lemmaname}
  \theoremstyle{definition}
  \newtheorem{defn}[thm]{\protect\definitionname}
  \theoremstyle{plain}
  \newtheorem{prop}[thm]{\protect\propositionname}
  \providecommand{\definitionname}{Definition}
  \providecommand{\lemmaname}{Lemma}
  \providecommand{\propositionname}{Proposition}
  \providecommand{\remarkname}{Remark}
\providecommand{\theoremname}{Theorem}
\begin{document}

\title[Compactness for scalar-flat metrics on umbilic
boundary manifolds]{A compactness result for scalar-flat metrics on manifolds with umbilic
boundary}

\author{Marco Ghimenti}
\address{M. Ghimenti, \newline Dipartimento di Matematica Universit\`a di Pisa
Largo B. Pontecorvo 5, 56126 Pisa, Italy}
\email{marco.ghimenti@unipi.it}

\author{Anna Maria Micheletti}
\address{A. M. Micheletti, \newline Dipartimento di Matematica Universit\`a di Pisa
Largo B. Pontecorvo 5, 56126 Pisa, Italy}
\email{a.micheletti@dma.unipi.it.}

\begin{abstract}
Let $(M,g)$ a compact Riemannian $n$-dimensional manifold with umbilic
boundary. It is well know that, under certain hypothesis, in the conformal
class of $g$ there are scalar-flat metrics that have $\partial M$
as a constant mean curvature hypersurface. In this paper we prove
that these metrics are a compact set, provided $n=8$ and the Weyl
tensor of the boundary is always different from zero, or if $n>8$
and the Weyl tensor of $M$ is always different from zero on the boundary.
\end{abstract}

\keywords{Scalar flat metrics, Umbilic boundary, Yamabe problem, Compactness}

\subjclass[2010]{35J65, 53C21}
\maketitle

\section{Introduction}

Let $(M,g)$ be a $n$-dimensional ($n\ge3$) compact Riemannian manifold
with boundary $\partial M$. In \cite{Es,Es2} J. Escobar investigated
the question of finding a conformal metric $\tilde{g}=u^{\frac{4}{n-2}}g$
for which $M$ has constant scalar curvature and $\partial M$ as
constant mean curvature hypersurface. From a PDEs point of view, this
is equivalent to the existence of a positive solution to the equation
\begin{equation}
\left\{ \begin{array}{cc}
L_{g}u=ku^{\frac{n+2}{n-2}} & \text{ in }M\\
B_{g}u=cu^{\frac{n}{n-2}} & \text{ on }\partial M
\end{array}\right.\label{eq:prob*}
\end{equation}
where $L_{g}u=\Delta_{g}u-\frac{n-2}{4(n-1)}R_{g}u$ and $B_{g}u=-\frac{\partial}{\partial\nu}u-\frac{n-2}{2}h_{g}u$
are respectively the conformal Laplacian and the conformal boundary
operator, $R_{g}$ is the scalar curvature of the manifold, $h_{g}$
is the mean curvature of the $\partial M$ and $\nu$ is the outer
normal with respect to $\partial M$. The motivation to study this
question arises from the classical Yamabe problem which consists of
finding a constant scalar curvature metric, conformal to a given metric
$g$ on a compact Riemannian manifold without boundary. By the works
of Yamabe, Trudinger, Aubin, Schoen \cite{Au1,S,T,Y} the original
problem was settled.

If a solution $u$ of Problem (\ref{eq:prob*}) exists, then the metric
$\tilde{g}=u^{\frac{4}{n-2}}g$ has constant scalar curvature $\frac{k(n-2)}{4(n-1)}$
and the boundary has mean curvature $c$. Problem (\ref{eq:prob*})
has been studied by many authors, see the recent paper of Disconzi,
Khuri \cite{DK} and the survey of Marques \cite{M2} for a list of references. For the case $c=0$ we
limit ourselves to cite among others \cite{ALM} and references therein. 

In this paper we consider the case of zero scalar curvature which
is particularly interesting because it is a higher-dimensional generalization
of the well known Riemann mapping Theorem and it leads to a linear
equation on the interior of $M$ with a critical nonlinear boundary
condition of Neumann type.

Namely, we are interested to positive solution of the equation 
\begin{equation}
\left\{ \begin{array}{cc}
L_{g}u=0 & \text{ in }M\\
B_{g}u+(n-2)u^{\frac{n}{n-2}}=0 & \text{ on }\partial M
\end{array}\right..\label{eq:Pconf}
\end{equation}
Solutions of (\ref{eq:P-conf}) are critical points of the functional
quotient
\[
Q(u):=\frac{\int\limits _{M}\left(|\nabla u|^{2}+\frac{n-2}{4(n-1)}R_{g}u^{2}\right)dv_{g}+\int\limits _{\partial M}\frac{n-2}{2}h_{g}u^{2}d\sigma_{g}}{\left(\int\limits _{\partial M}|u|^{\frac{2(n-1)}{n-2}}d\sigma_{g}\right)^{\frac{n-2}{n-1}}}.
\]
 In \cite{Es} Escobar introduced, in analogy with the classical Yamabe
problem, the quotient
\[
Q(M,\partial M):=\inf\left\{ Q(u)\ :\ u\in H^{1}(M),\ u\not\equiv0\text{ on }\partial M\right\} 
\]
which always satisfies the fundamental estimate
\begin{equation}
Q(M,\partial M)\le Q(\mathbb{B}^{n},\mathbb{S}^{n-1})\label{eq:ineq}
\end{equation}
where $\mathbb{B}^{n}$ is the unit ball in $\mathbb{R}^{n}$ endowed
with euclidean metric. Inequality (\ref{eq:ineq}) is important since
if it strict inequality holds, then a solution of (\ref{eq:Pconf})
exists.

When $(M,g)$ is not conformally equivalent to $(\mathbb{B}^{n},g_{\mathbb{R}^{n}})$,
existence results are proved by Escobar \cite{Es}, Marques \cite{M1},
Almaraz \cite{A3}, Chen \cite{ch}, Mayer and Ndiaye \cite{MN}.

Once the existence of solutions of (\ref{eq:Pconf}) is settled, it
is natural to study the compactness of the full set of solutions.
If $Q(M,\partial M)\le0$ the solution is unique up to a constant
factor. The situation turns out to be delicate if $Q(M,\partial M)>0$
and the underlying manifold is not the euclidean ball (in the case
of the euclidean ball the set of solution is known to be non compact).
Compactness has be proven by Felli and Ould Ahmedou in \cite{FA}
for any dimension $n\ge3$ in the case of locally conformally flat
manifolds with umbilic boundary and by Almaraz in \cite{Al} when
$n\ge7$ and the trace-free second fundamental form in non zero everywhere
on $\partial M$. An example of non compactness is given for $n\ge25$
and manifolds with umbilic boundary in \cite{A2}. We recall that
the boundary of $M$ is called \emph{umbilic} if the trace-free second
fundamental form of $\partial M$ is zero everywhere.

In the present work we are interested in the compactness of the set
of positive solutions to
\begin{equation}
\left\{ \begin{array}{cc}
L_{g}u=0 & \text{ in }M\\
B_{g}u+(n-2)u^{p}=0 & \text{ on }\partial M
\end{array}\right.\label{eq:Prob-2}
\end{equation}
where $1\le p\le\frac{n}{n-2}$ and the boundary of $M$ is umbilic.
Our main result is the following.
\begin{thm}
\label{thm:main}Let $(M,g)$ a smooth, $n$-dimensional Riemannian
manifold of positive type with regular umbilic boundary $\partial M$.
Suppose that $n>8$ and that the Weyl tensor $W_{g}$ is not vanishing
on $\partial M$ or suppose that $n=8$ and that the Weyl tensor referred
to the boundary $\bar{W}_{g}$ is not vanishing on $\partial M$.
Then, given $\bar{p}>1$, there exists a positive constant $C$ such
that, for any $p\in\left[\bar{p},\frac{n}{n-2}\right]$ and for any
$u>0$ solution of (\ref{eq:Prob-2}), it holds 
\[
C^{-1}\le u\le C\text{ and }\|u\|_{C^{2,\alpha}(M)}\le C
\]
for some $0<\alpha<1$. The constant $C$ does not depend on $u,p$. 
\end{thm}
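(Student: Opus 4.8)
\section*{Proof strategy for Theorem \ref{thm:main}}

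The natural approach is a proof by contradiction via the blow-up scheme of Schoen, in the boundary version developed by Felli--Ould Ahmedou and Almaraz (and in the form used by the authors for the related perturbation problem). Suppose the $L^{\infty}$ bound fails: there are exponents $p_{i}\in[\bar p,\frac{n}{n-2}]$ and positive solutions $u_{i}$ of (\ref{eq:Prob-2}) with $p=p_{i}$ and $\max_{M}u_{i}\to\infty$. It suffices to rule this out: once a uniform $L^{\infty}$ bound is known, the Harnack inequality for $L_{g}u=0$ (here the hypothesis that $(M,g)$ is of positive type enters, giving a maximum principle and a positive Green's function) yields $u\ge C^{-1}$, and Schauder estimates for the nonlinear Neumann--Robin condition $B_{g}u+(n-2)u^{p}=0$, with $p_{i}$ bounded, give the $C^{2,\alpha}(M)$ estimate; the hypothesis $\bar p>1$ is needed so that the rescaling exponent $p_{i}-1$ stays bounded away from $0$ and the bubbles genuinely concentrate. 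A first observation is that, since there is no nonlinearity in the interior and $L_{g}u_{i}=0$, the maximum principle together with the interior Harnack inequality confines all blow-up to $\partial M$.

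The first substantial step is to show that, along a subsequence, the blow-up set $\mathcal S\subset\partial M$ is finite and consists of \emph{isolated simple} blow-up points. This is the standard boundary blow-up machinery: rescaling $u_{i}$ around a sequence of local maxima $x_{i}\to\bar x$ with $M_{i}=u_{i}(x_{i})\to\infty$, one shows that $v_{i}(y)=M_{i}^{-1}u_{i}\big(x_{i}+M_{i}^{-(p_{i}-1)}y\big)$ converges in $C^{2}_{\mathrm{loc}}$ to the standard bubble on $\mathbb R^{n}_{+}$; one then establishes a Harnack inequality away from $\mathcal S$, and uses a first, rough Pohozaev inequality to exclude accumulation of bubbles and non-simple blow-up. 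Next, near a fixed $\bar x\in\mathcal S$ one passes to conformal Fermi coordinates: replacing $g$ by a conformal metric $\hat g=\varphi_{i}^{4/(n-2)}g$ with $\varphi_{i}>0$ bounded in $C^{2}$, one arranges (using umbilicity) that in $\hat g$-Fermi coordinates centered at $\bar x$ the boundary is totally geodesic to high order, the mean-curvature term drops, and $\det\hat g=1+O(|x|^{N})$ on $\partial M$, so that the Taylor expansion of $\hat g$ at $\bar x$ has its leading obstruction carried by the Weyl tensor $W_{g}(\bar x)$ of $M$ and, in the tangential block, by the boundary Weyl tensor $\bar W_{g}(\bar x)$.

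The third step is the quantitative Pohozaev identity. For $u_{i}$ in these coordinates on a half-ball $B_{\delta}^{+}$ one equates a surface integral $\mathcal P(u_{i},\delta)$ over the spherical part $\partial^{+}B_{\delta}^{+}$ with a bulk integral of error terms built from $\partial\hat g$ (plus, when $p_{i}<\frac{n}{n-2}$, a manifestly nonnegative Rellich-type term proportional to $\frac{n}{n-2}-p_{i}$). Using the isolated-simple structure and letting $\delta$ shrink suitably, one shows on one side that $\mathcal P(u_{i},\delta)$ converges, after renormalization, to a nonnegative quantity (essentially the coefficient of the $|x|^{2-n}$ term in the limiting profile, nonnegative by positivity of $u_{i}$ and the maximum principle), and on the other side that the dominant part of the bulk integral converges to $-c_{n}|W_{g}(\bar x)|^{2}$ times a positive constant when $n\ge9$, whereas for $n=8$ the interior Weyl contribution is only logarithmically controlled and a finer analysis makes $-c_{n}|\bar W_{g}(\bar x)|^{2}$ the surviving obstruction. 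Combining the two sides forces $0\le -c_{n}|W_{g}(\bar x)|^{2}$ for $n\ge 9$, respectively $0\le -c_{n}|\bar W_{g}(\bar x)|^{2}$ for $n=8$, contradicting the hypothesis that the relevant Weyl tensor does not vanish on $\partial M$. This contradiction yields the uniform bound $\max_{M}u\le C$, and the remaining estimates follow as explained above.

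The heart of the argument, and the main obstacle, is the combination of the isolated-simple blow-up analysis with the sharp evaluation of both sides of the Pohozaev identity: one must control the remainder in the bubble approximation in weighted norms precisely enough that the curvature term is not swamped by errors, pin down the sign of the surface/tail term, and carry out the delicate bookkeeping in the borderline dimension $n=8$, where the governing quantity switches from $W_{g}$ to $\bar W_{g}$. These are exactly the points at which the dimensional restriction $n\ge 8$ and the Weyl non-vanishing hypotheses are used.
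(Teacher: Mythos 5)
Your outline matches the paper's strategy at the top level: argue by contradiction, reduce to isolated simple boundary blow-up points (Propositions~\ref{prop:4.2}, \ref{prop:splitting}, \ref{prop:isolato->semplice}), pass to conformal Fermi coordinates, and use a Pohozaev identity to force the Weyl tensor to vanish at the limit point (Proposition~\ref{prop:7.1}). But the two things you yourself flag as ``the heart of the argument'' are left as wishes rather than steps, and your diagnosis of the $n=8$ borderline is wrong. The remainder control you gesture at is not achieved by weighted estimates on $v_i-U$ alone: the paper subtracts an explicit quadratic corrector $\gamma_q$ (Lemma~\ref{lem:vq}), defined by $-\Delta\gamma = \big[\tfrac13\bar R_{ikjl}y_ky_l + R_{ninj}y_n^2\big]\partial^2_{ij}U$ with the linearized Robin condition and orthogonality to the kernel, and proves $|v_i - U - \delta_i^2\gamma_{x_i}| \le C\delta_i^3(1+|y|)^{5-n}$ (Lemma~\ref{lem:coreLemma}, Proposition~\ref{prop:stimawi}). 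Without this subtraction the error is $O(\delta_i^2)$, which lands exactly at the $\delta_i^4$ order of the Weyl contribution and would swamp it. The corrector itself contributes $-2\delta_i^4\int\gamma_q\Delta\gamma_q$, whose sign $\ge 0$ is proved via an inversion to the ball and a sharp trace eigenvalue inequality; this is the device that replaces any positive-mass input, which your sketch (``nonnegative \dots by positivity of $u_i$ and the maximum principle'') implicitly leans on. The upper bound on $P(u_i,r)$ is, in turn, just the crude decay estimate $O(M_i^{-2\lambda_i})=o(\delta_i^4)$ from Proposition~\ref{prop:Lemma 4.4}: no comparison with a limiting Green's profile is required on that side.

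Your explanation of the $n=8$ dichotomy as a logarithmic borderline is incorrect: no logarithm appears at $n=8$ in this analysis (the only log-critical integral, the mean-curvature error in Proposition~\ref{prop:segno}, is borderline at $n=7$, which the hypothesis $n\ge8$ already excludes). The mechanism is purely algebraic. Proposition~\ref{prop:segno} yields
\[
P(u_i,r)\;\ge\;c_n\,\delta_i^4\Big[\tfrac{n-2}{6}|\bar W(x_i)|^2 + \tfrac{4(n-8)}{n-4}R_{ninj}^2(x_i)\Big] - 2\delta_i^4\int_{\mathbb R^n_+}\gamma_{x_i}\Delta\gamma_{x_i}\,dy + o(\delta_i^4)
\]
with $c_n>0$. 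For $n>8$ both bracketed coefficients are strictly positive, so $\bar W(x_0)=0$ and $R_{ninj}(x_0)=0$, which for an umbilic boundary is equivalent to $W(x_0)=0$. For $n=8$ the factor $\tfrac{4(n-8)}{n-4}$ vanishes identically, so only $\bar W(x_0)=0$ can be concluded. This explicit coefficient cancellation --- established by the integral computations of Lemma~\ref{lem:R(U,U)} and Lemma~\ref{lem:integraligamma} --- and not a log estimate, is why the theorem's hypothesis switches from $W_g$ to $\bar W_g$ at $n=8$. A sketch that does not identify the corrector $\gamma_q$ and that misreads the source of the $n=8$ threshold has not yet located the actual content of the proof.
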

The proof is based on a local argument with Pohozaev type identity. This strategy was first introduced by Schoen \cite{S} for a manifold 
without boundary. 
In this paper we avoid the use of any positive mass assumption: a crucial step
is to provide a sharp correction term (see Lemma \ref{lem:vq}, Lemma
\ref{lem:coreLemma} and Proposition \ref{eq:stimaW1}) for the usual
approximation of a rescaled solution by a bubble around an isolated
simple blow up point (see Definition \ref{def:isolated}). The idea
of using a suitable correction term of a bubble to obtain refined
point-wise blow up estimates was used in \cite{B,HV,KMS}, in the case
of manifold without boundary, and in \cite{Al}, in the case of manifold
with boundary.

The compactness issue is closely related to the existence of blowing
up solution for small perturbation of (\ref{eq:Pconf}). In this direction
there are some result of noncompactness for the perturbed problem
if the linear perturbation of the mean curvature on the boundary is
strictly positive everywhere (see \cite{GMP,GMP2}). Then we do not
have the stability of compactness result under a small positive linear
perturbation of the boundary condition.

A key observation is that our correction term allows us to obtain
the vanishing of the Weyl tensor on the boundary (see Proposition
\ref{prop:7.1}).

The paper is organized as follows. After some preliminaries, in Section
\ref{sec:blowuppoints} we recall the notions of isolated and isolated
simple blow up point, and some well known basic properties related
to these points. In Section \ref{sec:estimates}, and in particular
in Proposition \ref{eq:stimaW1}, we give a crucial estimate for a
blowing up sequence of solutions near an isolated simple blow up point,
using the sharp correction term defined in Lemma \ref{lem:vq}. Then,
in Section \ref{sec:Poho} and in Section \ref{sec:Sign}, after presenting
a Pohozaev type identity, we provide a sign estimate of the terms
of Pohozaev identity near an isolated simple blow up point, and by
this result we prove the vanishing of Weyl tensor at any isolated
simple blow up point (Proposition \ref{prop:7.1}). In Section \ref{sec:splitting}
we reduce our analysis to the case of an isolated simple blow up points.
and finally in Section \ref{sec:Proof} we prove our compactness result.

\section{Preliminaries and notations}
\begin{nota}
We collect here our main notations. We will use the indices $1\le i,j,k,m,p,r,s\le n-1$
and $1\le a,b,c,d\le n$. Moreover we use the Einstein convention
on repeated indices. We denote by $g$ the Riemannian metric, by $R_{abcd}$
the full Riemannian curvature tensor, by $R_{ab}$ the Ricci tensor
and by $R_{g}$ the scalar curvature of $(M,g)$; moreover the Weyl
tensor of $(M,g)$ will be denoted by $W_{g}$. The bar over an object
(e.g. $\bar{W}_{g}$) will means the restriction to this object to
the metric of $\partial M$. By $-\Delta_{g}$ we denote the Laplace-Beltrami
operator on $(M,g)$ and we will often use the common notation for
conformal Laplacian $L_{g}=-\Delta_{g}+\frac{n-2}{4(n-1)}R_{g}$ and
the conformal boundary operator $B_{g}=\frac{\partial}{\partial\nu}+\frac{n-2}{2}h_{g}$,
where $\nu$ is the outward normal to $\partial M$. Finally, on the
half space $\mathbb{R}_{+}^{n}=\left\{ y=(y_{1},\dots,y_{n-1},y_{n})\in\mathbb{R}^{n},\!\ y_{n}\ge0\right\} $
we set $B_{r}(y_{0})=\left\{ y\in\mathbb{R}^{n},\!\ |y-y_{0}|\le r\right\} $
and $B_{r}^{+}(y_{0})=B_{r}(y_{0})\cap\left\{ y_{n}>0\right\} $.
When $y_{0}=0$ we will use simply $B_{r}=B_{r}(y_{0})$ and $B_{r}^{+}=B_{r}^{+}(y_{0})$.
On the half ball $B_{r}^{+}$ we set $\partial'B_{r}^{+}=B_{r}^{+}\cap\partial\mathbb{R}_{+}^{n}=B_{r}^{+}\cap\left\{ y_{n}=0\right\} $
and $\partial^{+}B_{r}^{+}=\partial B_{r}^{+}\cap\left\{ y_{n}>0\right\} $.
On $\mathbb{R}_{+}^{n}$ we will use the following decomposition of
coordinates: $(y_{1},\dots,y_{n-1},y_{n})=(\bar{y},y_{n})=(z,t)$
where $\bar{y},z\in\mathbb{R}^{n-1}$ and $y_{n},t\ge0$.

Finally, fixed a point $q\in\partial M$, we denote by $\psi_{q}:B_{r}^{+}\rightarrow M$
the Fermi coordinates centered at $q$. We denote by $B_{g}^{+}(q,r)$
the image of $\psi_{q}(B_{r}^{+})$. When no ambiguity is possible,
we will denote $B_{g}^{+}(q,r)$ simply by $B_{r}^{+}$, omitting
the chart $\psi_{q}$.
\end{nota}

We can work with a slightly more general problem
\begin{equation}
\left\{ \begin{array}{cc}
L_{g}u=0 & \text{ in }M\\
B_{g}u+(n-2)f^{-\tau}u^{p}=0 & \text{ on }\partial M
\end{array}\right.\label{eq:Prob-3}
\end{equation}
where $\tau=\frac{n}{n-2}-p$, $p\in\left[\bar{p},\frac{n}{n-2}\right]$
for some fixed $\bar{p}>1$, and $f>0$. The reason to work with this
equation instead of equation (\ref{eq:Prob-2}) is that equation (\ref{eq:Prob-3})
has an important conformal invariance property.

Since the boundary $\partial M$of $M$ is umbilic, it is well know
the existence of a conformal metric related to $g$ and the existence
of the conformal Fermi coordinates, which will simplify the future
computations. 

Given $q\in\partial M$ there exists a conformally related metric
$\tilde{g}_{q}=\Lambda_{q}^{\frac 4{n-2}}g$ such that some geometric quantities
at $q$ have a simpler form which will be summarized in the next claim.
Moreover 
\[
\Lambda_{q}(q)=1,\ \frac{\partial\Lambda_{q}}{\partial y_{k}}(q)=0\text{ for all }k=1,\dots,n-1.
\]
 Set $\tilde{u}_{q}=\Lambda_{q}^{-1}u$ and $\tilde{f}_{q}=\Lambda_{q}f$
it holds 
\begin{equation}
\left\{ \begin{array}{cc}
L_{\tilde{g}_{q}}\tilde{u}_{q}=0 & \text{ in }M\\
B_{\tilde{g}_{q}}\tilde{u}_{q}+(n-2)\tilde{f}_{q}^{-\tau}\tilde{u}_{q}^{p}=0 & \text{ on }\partial M
\end{array}\right..\label{eq:P-conf}
\end{equation}
In the following we study equation (\ref{eq:P-conf}) and in order
to simplify notations, we will omit the \emph{tilda} symbol and we
will omit $\psi_{x_{i}}$ whenever is not needed, so we will write
\[
y\in B_{r}^{+}\text{ instead of }\psi_{q}(y)\in M;\ 0\text{ instead of }q=\psi_{q}(0);\ u\text{ instead of }u\circ\psi_{q}
\]
where $\psi_{q}:B_{r}^{+}\rightarrow M$ are the Fermi conformal coordinates
centered at $q$. 
\begin{rem}
\label{rem:confnorm}In Fermi conformal coordinates around $q\in\partial M$,
it holds (see \cite{M1})
\begin{equation}
|\text{det}g_{q}(y)|=1+O(|y|^{n})\label{eq:|g|}
\end{equation}
\begin{eqnarray}
|h_{ij}(y)|=O(|y^{4}|) &  & |h_{g}(y)|=O(|y^{4}|)\label{eq:hij}
\end{eqnarray}
\begin{align}
g_{q}^{ij}(y)= & \delta^{ij}+\frac{1}{3}\bar{R}_{ikjl}y_{k}y_{l}+R_{ninj}y_{n}^{2}\label{eq:gij}\\
 & +\frac{1}{6}\bar{R}_{ikjl,m}y_{k}y_{l}y_{m}+R_{ninj,k}y_{n}^{2}y_{k}+\frac{1}{3}R_{ninj,n}y_{n}^{3}\nonumber \\
 & +\left(\frac{1}{20}\bar{R}_{ikjl,mp}+\frac{1}{15}\bar{R}_{iksl}\bar{R}_{jmsp}\right)y_{k}y_{l}y_{m}y_{p}\nonumber \\
 & +\left(\frac{1}{2}R_{ninj,kl}+\frac{1}{3}\text{Sym}_{ij}(\bar{R}_{iksl}R_{nsnj})\right)y_{n}^{2}y_{k}y_{l}\nonumber \\
 & +\frac{1}{3}R_{ninj,nk}y_{n}^{3}y_{k}+\frac{1}{12}\left(R_{ninj,nn}+8R_{nins}R_{nsnj}\right)y_{n}^{4}+O(|y|^{5})\nonumber 
\end{align}
\begin{equation}
\bar{R}_{g_{q}}(y)=O(|y|^{2})\text{ and }\partial_{ii}^{2}\bar{R}_{g_{q}}=-\frac{1}{6}|\bar{W}|^{2}\label{eq:Rii}
\end{equation}
\begin{equation}
\partial_{tt}^{2}\bar{R}_{g_{q}}=-2R_{ninj}^{2}-2R_{ninj,ij}\label{eq:Rtt}
\end{equation}
\begin{equation}
\bar{R}_{kl}=R_{nn}=R_{nk}=R_{nn,kk}=0\label{eq:Ricci}
\end{equation}
\begin{equation}
R_{nn,nn}=-2R_{nins}^{2}.\label{eq:Rnnnn}
\end{equation}
All the quantities above are calculate in $q\in\partial M$, unless
otherwise specified.
\end{rem}
We set ${\displaystyle U(y):=\frac{1}{\left[(1+y_{n})^{2}+|\bar{y}|^{2}\right]^{\frac{n-2}{2}}}}$
to be the standard bubble. The function $U$ solves the problem 
\begin{equation}
\left\{ \begin{array}{cc}
\Delta U=0 & \text{ in }\mathbb{R}_{+}^{n}\\
\frac{\partial U}{\partial y_{n}}+(n-2)U^{\frac{n}{n-2}}=0 & \text{ on }\partial\mathbb{R}_{+}^{n}
\end{array}\right..\label{eq:ProbBubble}
\end{equation}
If we linearize Problem (\ref{eq:ProbBubble}) around the function
$U$, we have that all the solutions of the linearized problem are
generated by the functions
\begin{equation}
j_{l}:=\partial_{l}U=-(n-2)\frac{y_{l}}{\left[(1+y_{n})^{2}+|\bar{y}|^{2}\right]^{\frac{n}{2}}}\label{eq:jl}
\end{equation}
\begin{equation}
j_{n}:=y^{b}\partial_{b}U+\frac{n-2}{2}U=-\frac{n-2}{2}\frac{|y|^{2}-1}{\left[(1+y_{n})^{2}+|\bar{y}|^{2}\right]^{\frac{n}{2}}}.\label{eq:jn}
\end{equation}
Finally, we have
\[
\partial_{k}\partial_{l}U=(n-2)\left\{ \frac{ny_{l}y_{k}}{\left[(1+y_{n})^{2}+|\bar{y}|^{2}\right]^{\frac{n+2}{2}}}-\frac{\delta^{kl}}{\left[(1+y_{n})^{2}+|\bar{y}|^{2}\right]^{\frac{n}{2}}}\right\} .
\]
In the following Lemma we introduce the function $\gamma_{q}$ as
the solution of a certain linear problem. This function $\gamma_{q}$
plays a fundamental role in this paper: by this choice of $\gamma_{q}$
we are able to cancel the term of second order in formula (\ref{eq:Qi-parziale}),
which is crucial to obtain Lemma \ref{lem:coreLemma}. Also, the estimates
of Proposition \ref{prop:stimawi} and of Lemma \ref{lem:R(U,vq)}
depend on the properties of function $\gamma_{q}$. The proof of the
following Lemma is analogous to \cite[Lemma 3]{GMP} and \cite[Proposition 5.1]{Al}.
However, we rewrite the proof in the appendix.
\begin{lem}
\label{lem:vq}Assume $n\ge5$. Given a point $q\in\partial M$, there
exists a unique $\gamma_{q}:\mathbb{R}_{+}^{n}\rightarrow\mathbb{R}$
a solution of the linear problem 
\begin{equation}
\left\{ \begin{array}{ccc}
-\Delta\gamma=\left[\frac{1}{3}\bar{R}_{ijkl}(q)y_{k}y_{l}+R_{ninj}(q)y_{n}^{2}\right]\partial_{ij}^{2}U &  & \text{on }\mathbb{R}_{+}^{n}\\
\frac{\partial\gamma}{\partial y_{n}}=-nU^{\frac{2}{n-2}}\gamma &  & \text{on }\partial\mathbb{R}_{+}^{n}
\end{array}\right.\label{eq:vqdef}
\end{equation}
which is $L^{2}(\mathbb{R}_{+}^{n})$-orthogonal to the functions
$j_{1},\dots,j_{n}$ defined in (\ref{eq:jl}) and (\ref{eq:jn}).

Moreover it holds
\begin{equation}
|\nabla^{\tau}\gamma_{q}(y)|\le C(1+|y|)^{4-\tau-n}\text{ for }\tau=0,1,2;\label{eq:gradvq}
\end{equation}
\begin{equation}
\int_{\mathbb{R}_{+}^{n}}\gamma_{q}\Delta\gamma_{q}dy\le0;\label{new}
\end{equation}

\begin{equation}
\int_{\partial\mathbb{R}_{+}^{n}}U^{\frac{n}{n-2}}(t,z)\gamma_{q}(t,z)dz=0;\label{eq:Uvq}
\end{equation}
\begin{equation}
\gamma_{q}(0)=\frac{\partial\gamma_{q}}{\partial y_{1}}(0)=\dots=\frac{\partial\gamma_{q}}{\partial y_{n-1}}(0)=0.\label{eq:dervq}
\end{equation}

Finally the map $q\mapsto\gamma_{q}$ is $C^{2}(\partial M)$.
\end{lem}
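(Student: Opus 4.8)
The plan is to follow the scheme of \cite{GMP,Al}. Write $F_q$ for the right-hand side of the interior equation in (\ref{eq:vqdef}); from the explicit formula for $\partial^2_{ij}U$ and the inequality $|y|^2\le(1+y_n)^2+|\bar y|^2$ one has $|F_q(y)|\le C(1+|y|)^{2-n}$. I would work in the Hilbert space $\mathcal H$ obtained by completing $C^\infty_c(\overline{\mathbb R^n_+})$ with respect to $\|\phi\|_{\mathcal H}^2=\int_{\mathbb R^n_+}|\nabla\phi|^2\,dy$, so that $F_q$ represents a bounded functional $\phi\mapsto\int_{\mathbb R^n_+}F_q\phi\,dy$ on $\mathcal H$ (for $n=5,6$, where the natural decay rate $|y|^{4-n}$ just fails to put $\gamma_q$ in $\mathcal H$, one replaces $\mathcal H$ by a suitable weighted Sobolev space, as in \cite{Al}, and proceeds identically). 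On $\mathcal H$ consider the bounded symmetric form
\[
B(\phi,\psi):=\int_{\mathbb R^n_+}\nabla\phi\cdot\nabla\psi\,dy-n\int_{\partial\mathbb R^n_+}U^{\frac2{n-2}}\phi\psi\,dz ,
\]
whose continuity comes from the trace Sobolev inequality and $U^{\frac2{n-2}}\in L^{n-1}(\partial\mathbb R^n_+)$. Writing $B(\phi,\psi)=\langle(I-K)\phi,\psi\rangle_{\mathcal H}$, the operator $K$ is compact, hence $L:=I-K$ is Fredholm of index zero; its kernel equals $\mathrm{span}\{j_1,\dots,j_n\}$ by the nondegeneracy of $U$ recalled after (\ref{eq:ProbBubble}).

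The main step is to verify the $n$ compatibility conditions $\int_{\mathbb R^n_+}F_q\,j_l\,dy=0$, $l=1,\dots,n$: by the Fredholm alternative these guarantee that (\ref{eq:vqdef}) has a solution in $\mathcal H$, unique once $L^2$-orthogonality to $\mathrm{span}\{j_1,\dots,j_n\}$ is imposed. To check them I would pair $F_q$ with $j_l=\partial_lU$, and with $j_n=y^b\partial_bU+\tfrac{n-2}{2}U$, integrate by parts to remove the second derivatives of $U$, and observe that each surviving integrand is odd in a free coordinate; the cancellations come from the antisymmetries of the Riemann tensor, the first Bianchi identity, and the vanishing at $q$ of $\bar R_{g_q}$, $\bar R_{ij}$, $R_{ni}$ and $R_{nn}$ in conformal Fermi coordinates (see (\ref{eq:Ricci}) and Remark \ref{rem:confnorm}). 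The same computation gives $\int_{\mathbb R^n_+}F_q\,U\,dy=0$, which I record for later. Once $\gamma_q$ exists, elliptic regularity (the Laplacian with a smooth Robin condition) gives $\gamma_q\in C^\infty(\overline{\mathbb R^n_+})$, so in particular $\gamma_q$ and its first two derivatives are bounded near the origin.

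For the decay estimate (\ref{eq:gradvq}) I would argue on the dyadic half-annuli $A_k:=\{2^k\le|y|\le2^{k+1}\}\cap\mathbb R^n_+$. As $(1+|y|)^{4-n}$ is a supersolution of $-\Delta v=|F_q|$ away from the origin and the boundary coefficient $nU^{\frac2{n-2}}$ is nonnegative, a comparison argument gives $|\gamma_q(y)|\le C(1+|y|)^{4-n}$. Rescaling $\gamma_q$ on each $A_k$ to the unit half-annulus — where the interior datum has size $O(2^{k(2-n)})$ and the boundary coefficient remains bounded — and applying interior and boundary Schauder estimates then upgrades this to $|\nabla^\tau\gamma_q(y)|\le C(1+|y|)^{4-\tau-n}$, $\tau=0,1,2$. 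These bounds make the integrals below absolutely convergent (for $n\ge7$; for $n=5,6$ they hold in the weighted framework).

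The remaining properties follow quickly. Testing (\ref{eq:vqdef}) with $U$ and integrating by parts twice, using $\Delta U=0$ in $\mathbb R^n_+$ and $\partial_{y_n}U=-(n-2)U^{\frac n{n-2}}$ on $\partial\mathbb R^n_+$, gives $-2\int_{\partial\mathbb R^n_+}U^{\frac n{n-2}}\gamma_q\,dz=\int_{\mathbb R^n_+}F_q\,U\,dy=0$, i.e.\ (\ref{eq:Uvq}). For (\ref{new}), integrating by parts and using the boundary condition in (\ref{eq:vqdef}) gives $\int_{\mathbb R^n_+}\gamma_q\Delta\gamma_q\,dy=-B(\gamma_q,\gamma_q)$, and $B(\gamma_q,\gamma_q)\ge0$ because, by (\ref{eq:Uvq}), $\gamma_q$ lies in the subspace $\{\psi:\int_{\partial\mathbb R^n_+}U^{\frac n{n-2}}\psi\,dz=0\}$ on which the second variation of Escobar's sharp trace inequality — attained by $U$, with null space $\mathrm{span}\{j_1,\dots,j_n\}$ — is nonnegative. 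The identities (\ref{eq:dervq}) would follow from the invariance of $F_q$, hence of $\gamma_q$ by uniqueness, under $y_k\mapsto-y_k$ for $k=1,\dots,n-1$ (after, if necessary, rotating the boundary coordinates to diagonalize $R_{ninj}(q)$), combined with (\ref{eq:Uvq}). Finally $q\mapsto\gamma_q$ is $C^2$ because the coefficients of $F_q$ depend smoothly on $q$ through the metric and the conformal Fermi chart, so $q\mapsto F_q$ is $C^2$ into $\mathcal H'$ and $\gamma_q$ is the image of $F_q$ under the fixed bounded inverse of $L$ on $(\mathrm{span}\{j_l\})^\perp$, the uniform decay bounds transferring the $C^2$-dependence to the pointwise topology on $\partial M$. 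I expect the real difficulty to be the compatibility identities $\int_{\mathbb R^n_+}F_q j_l\,dy=0$ and $\int_{\mathbb R^n_+}F_qU\,dy=0$ — where the umbilicity and the conformal Fermi normalization of Remark \ref{rem:confnorm} are used essentially — and the sharp second-order bound (\ref{eq:gradvq}) uniform in $q$; the low dimensions $n=5,6$ add the technical point that $\mathcal H$ must be taken as a weighted space, since then $\gamma_q\notin D^{1,2}(\mathbb R^n_+)$.
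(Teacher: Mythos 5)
Your proposal follows the same architecture as the paper (orthogonality of $F_q$ to $j_1,\dots,j_n$, solvability via Fredholm, decay, the spectral-gap proof of \eqref{new}), but with a genuinely different implementation: you work directly on $\mathbb{R}^n_+$ with a variational bilinear form, a supersolution comparison, and a symmetry argument for \eqref{eq:dervq}, whereas the paper pulls everything back to the ball $B^n$ through the conformal inversion $F$, solving $-\Delta\bar\gamma_q=f_q$, $\partial_\nu\bar\gamma_q+2\bar\gamma_q=0$, where Fredholm theory is classical, the Green function of the ball gives the decay at once, and the Steklov eigenvalue $\inf_{\int_{\partial B^n}\phi=0}\int_{B^n}|\nabla\phi|^2\big/\int_{\partial B^n}\phi^2=2$ gives \eqref{new}. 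The ball route is not merely cosmetic: it removes the $n=5,6$ obstruction that forces you into weighted spaces (on the ball everything lives in $H^1(B^n)$), and it turns the decay estimate into a one-line Green-function computation with the bound \eqref{eq:ALstimagreen}.

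Two steps in your version have real gaps. First, your comparison argument for \eqref{eq:gradvq} does not apply as stated: after rewriting the boundary condition with the outward normal $\nu=-e_n$ it reads $\partial_\nu\gamma_q-nU^{2/(n-2)}\gamma_q=0$, and the Robin coefficient $-nU^{2/(n-2)}$ has the \emph{wrong} sign for the maximum principle. This is not a technicality — it is exactly why the problem has the $n$-dimensional kernel $\mathrm{span}\{j_1,\dots,j_n\}$, so no naive supersolution barrier can close. One must either restrict the comparison to the exterior of a large half-ball (where the Robin coefficient is small) and patch with an a priori interior bound, or, as the paper does, transform to the ball and represent $\bar\gamma_q$ through the Green function with mixed boundary data.

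Second, your symmetry argument for \eqref{eq:dervq} is incomplete. A single reflection $y_k\mapsto -y_k$ does \emph{not} leave $F_q$ invariant: diagonalizing $R_{ninj}(q)$ fixes only the second summand, while $\tfrac13\bar R_{ijkl}y_ky_l\partial^2_{ij}U$ contains off-diagonal components $\bar R_{ijkl}$ (with $i\ne j$, $k\ne l$) that cannot be simultaneously reduced by a rotation; the correct invariance is only under the simultaneous flip $\bar y\mapsto-\bar y$. That does give the evenness of $\gamma_q$ in $\bar y$ and hence $\partial_{y_k}\gamma_q(0)=0$ for $k=1,\dots,n-1$, but it says nothing about $\gamma_q(0)=0$, and \eqref{eq:Uvq} (a weighted integral condition) does not supply it. The vanishing $\gamma_q(0)=0$ is a genuinely finer statement; the paper simply cites \cite[Proposition 5.1]{Al} for it (where it is extracted from the spherical-harmonic decomposition of $\bar\gamma_q$ on $B^n$), and you would need either that decomposition or an equivalent structural argument.

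Minor points: your constant in the integration by parts that yields \eqref{eq:Uvq} should be $-2(n-1)$, not $-2$ (harmless), and the compactness of your operator $K$ needs the observation that the weighted trace embedding $D^{1,2}(\mathbb{R}^n_+)\hookrightarrow L^2(\partial\mathbb{R}^n_+,U^{2/(n-2)}\,dz)$ is compact because the weight decays like $|z|^{-2}$ — worth a sentence.
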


\section{Isolated and isolated simple blow up points\label{sec:blowuppoints}}

In this section we will define two particular kind of blow up points,
and we collect a series of results that focus on the asymptotic behavior
of these blow up points. These results are now quite standard, so
we will only collect the claims, while for the proofs we refer to
\cite{Al,FA,HL,M3}. 

Let $\left\{ u_{i}\right\} _{i}$ be a sequence of positive solution
to 
\begin{equation}
\left\{ \begin{array}{cc}
L_{g_{i}}u=0 & \text{ in }M\\
B_{g_{i}}u+(n-2)f_{i}^{-\tau_{i}}u^{p_{i}}=0 & \text{ on }\partial M
\end{array}\right..\label{eq:Prob-i}
\end{equation}
where $p_{i}\in\left[\bar{p},\frac{n}{n-2}\right]$ for some fixed
$\bar{p}>1$, $\tau_{i}=\frac{n}{n-2}-p_{i}$, $f_{i}\rightarrow f$
in $C_{\text{loc}}^{1}$ for some positive function $f$ and $g_{i}\rightarrow g_{0}$
in the $C_{\text{loc}}^{3}$ topology.
\begin{defn}
\label{def:blowup}We say that $x_{0}\in\partial M$ is a blow up
point for the sequence $u_{i}$ of solutions of (\ref{eq:Prob-i})
if there is a sequence $x_{i}\in\partial M$ such that 
\begin{enumerate}
\item $x_{i}\rightarrow x_{0}$;
\item $x_{i}$ is a local maximum point of $\left.u_{i}\right|_{\partial M}$
;
\item $u_{i}(x_{i})\rightarrow+\infty.$
\end{enumerate}
\end{defn}
Shortly we say that $x_{i}\rightarrow x_{0}$ is a blow up point for
$\left\{ u_{i}\right\} _{i}$. 

Given $x_{i}\rightarrow x_{0}$ a blow up point for $\left\{ u_{i}\right\} _{i}$,
we set
\[
M_{i}:=u_{i}(x_{i})
\]

\begin{defn}
\label{def:isolated}We say that $x_{i}\rightarrow x_{0}$ is an \emph{isolated}
blow up point for $\left\{ u_{i}\right\} _{i}$ if $x_{i}\rightarrow x_{0}$
is a blow up point for $\left\{ u_{i}\right\} _{i}$ and there exist
two constants $\rho,C>0$ such that
\[
u_{i}(x)\le Cd_{\bar{g}}(x,x_{i})^{-\frac{1}{p_{i-1}}}\text{ for all }x\in\partial M\smallsetminus\left\{ x_{i}\right\} ,\ d_{\bar{g}}(x,x_{i})<\rho.
\]
Here $\bar{g}$ denotes the metric on the boundary induced by $g$
and $d_{\bar{g}}(\cdot,\cdot)$ is the geodesic distance on the boundary
between two points.

We recall the following result
\end{defn}
\begin{prop}
\label{prop:4.1}Let $x_{i}\rightarrow x_{0}$ is an \emph{isolated}
blow up point for $\left\{ u_{i}\right\} _{i}$ and $\rho$ as in
Definition \ref{def:isolated}. We set 
\[
v_{i}(y)=M_{i}^{-1}(u_{i}\circ\psi_{i})(M_{i}^{1-p_{i}}y),\text{ for }y\in B_{\rho M_{i}^{p_{i}-1}}^{+}(0).
\]
Then, given $R_{i}\rightarrow\infty$ and $\beta_{i}\rightarrow0$,
up to subsequences, we have
\begin{enumerate}
\item $|v_{i}-U|_{C^{2}\left(B_{R_{i}}^{+}(0)\right)}<\beta_{i}$;
\item ${\displaystyle \lim_{i\rightarrow\infty}\frac{R_{i}}{\log M_{i}}=0}$;
\item ${\displaystyle \lim_{i\rightarrow\infty}p_{i}=\frac{n}{n-2}}$.
\end{enumerate}
\end{prop}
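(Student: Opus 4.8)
The plan is to run the usual blow-up analysis at $x_i$ and to identify the rescaled limit through the classification of solutions on the half space. I set $\mu_i:=M_i^{1-p_i}$; since $p_i\ge\bar p>1$ and $M_i\to\infty$ we have $\mu_i\to0$ and $\rho M_i^{p_i-1}\to\infty$, so for every fixed $R>0$ the function $v_i$ is defined on $B_R^+$ once $i$ is large. Writing $\hat g_i(y):=(\psi_i^{*}g_i)(\mu_i y)$ for the Fermi pull-back of $g_i$ dilated by $\mu_i$ and $\hat f_i(y):=f_i(\psi_i(\mu_i y))$, the conformal scaling of $L_{g_i}$ and $B_{g_i}$ together with the identity $\mu_i M_i^{p_i-1}=1$ gives
\[
L_{\hat g_i}v_i=0 \ \text{ in }B_R^+,\qquad B_{\hat g_i}v_i+(n-2)\hat f_i^{-\tau_i}v_i^{p_i}=0 \ \text{ on }\partial'B_R^+,
\]
with $v_i(0)=1$ and $\nabla_{\bar y}v_i(0)=0$, the latter because $x_i$ is a local maximum of $u_i|_{\partial M}$. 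Using $g_i\to g_0$ in $C^3_{\mathrm{loc}}$ and $\mu_i\to0$ one gets $\hat g_i\to\delta$ in $C^2_{\mathrm{loc}}(\overline{\mathbb{R}_+^n})$, $R_{\hat g_i}\to0$, $h_{\hat g_i}\to0$ and $\hat f_i\to f(x_0)>0$ locally uniformly.

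\emph{A priori bounds and a limit.} The isolated blow-up hypothesis rescales to $v_i(y)\le C|y|^{-1/(p_i-1)}$ on $\partial'B_R^+\setminus\{0\}$, and the local maximum property gives $v_i\le1$ on a fixed half ball $\partial'B_{\bar\delta}^+$; since $p_i-1\ge\bar p-1>0$, these bound $v_i^{p_i-1}$ on every $\partial'B_R^+$ uniformly in $i$. Hence $v_i>0$ solves a uniformly elliptic equation with bounded coefficients carrying a Robin condition on $\partial'B_R^+$ with uniformly bounded coefficient, so the boundary Harnack inequality yields $\sup_{B_R^+}v_i\le C(R)\inf_{B_R^+}v_i\le C(R)v_i(0)=C(R)$. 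Elliptic $L^p$ and Schauder estimates up to $\partial'B_R^+$ (using $g_i\in C^3$, $f_i\in C^1$, $v_i$ bounded, $p_i\ge1$) then bound $v_i$ in $C^{2,\alpha}(B_R^+)$ for every $R$; by a diagonal compactness argument, along a subsequence $v_i\to v$ in $C^2_{\mathrm{loc}}(\overline{\mathbb{R}_+^n})$ for some $v\ge0$ with $v(0)=1$ and $\nabla_{\bar y}v(0)=0$.

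\emph{Identification of the limit.} Up to a further subsequence $p_i\to p_\infty\in[\bar p,\frac{n}{n-2}]$ and $\tau_i\to\tau_\infty=\frac{n}{n-2}-p_\infty$; passing to the limit, $v$ is a bounded nonnegative solution of $\Delta v=0$ in $\mathbb{R}_+^n$ and $\frac{\partial v}{\partial y_n}+(n-2)f(x_0)^{-\tau_\infty}v^{p_\infty}=0$ on $\partial\mathbb{R}_+^n$. If $p_\infty<\frac{n}{n-2}$ this problem is subcritical, and the Liouville theorem for bounded nonnegative solutions of the Laplace equation on the half space with such a nonlinear Neumann condition forces $v\equiv0$, contradicting $v(0)=1$; hence $p_\infty=\frac{n}{n-2}$, and since every convergent subsequence of $(p_i)$ has this limit, $p_i\to\frac{n}{n-2}$, which is item (3). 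With $\tau_\infty=0$ the limit problem is exactly problem (\ref{eq:ProbBubble}), so by the classification of its nonnegative solutions (Escobar, Li--Zhu) $v$ is a standard bubble, and $\nabla_{\bar y}v(0)=0$ together with $v(0)=1$ forces $v=U$; with the $C^2_{\mathrm{loc}}$ convergence this is item (1).

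\emph{Choice of $R_i$ and the main difficulty.} Finally, since $v_i\to U$ in $C^2_{\mathrm{loc}}$ along the subsequence, one picks $R_i\to\infty$ slowly enough that $\beta_i:=|v_i-U|_{C^2(B_{R_i}^+)}\to0$ and simultaneously $R_i\le\log\log M_i$; because $M_i\to\infty$ this gives $\lim_i R_i/\log M_i=0$ (and also $R_iM_i^{1-p_i}\to0$, so $B_{R_i}^+$ stays inside the Fermi chart), which is item (2). The one genuinely technical step is the interior estimate above — upgrading the one-sided boundary decay that defines an isolated blow-up to a full bound on $v_i$ near the blow-up point $0$, where the nonlinear boundary coefficient $v_i^{p_i-1}$ is not small — and this is precisely where the boundary Harnack inequality and the sign of the Neumann nonlinearity are used; the Liouville and classification facts invoked for the limit are classical (see the references for this section).
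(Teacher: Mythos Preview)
The paper does not supply its own proof of this proposition; it explicitly says these results are standard and refers to \cite{Al,FA,HL,M3}. Your outline is exactly the argument one finds there: rescale around $x_i$, use Harnack-type estimates to get uniform local $C^0$ bounds on $v_i$, upgrade to $C^{2,\alpha}$ via elliptic theory, extract a $C^2_{\mathrm{loc}}$ limit, identify the limit by the Liouville/classification theorems on $\mathbb{R}_+^n$, and finally pick $R_i\to\infty$ slowly enough to secure items (1) and (2). So the approach matches.

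One small point worth tightening: your sentence ``the local maximum property gives $v_i\le 1$ on a fixed half ball $\partial'B_{\bar\delta}^+$'' is not quite justified as written, since the boundary neighborhood on which $x_i$ is a local maximum of $u_i|_{\partial M}$ could in principle shrink with $i$. In the standard references this step is handled differently: one combines the rescaled isolated-blow-up bound $v_i(\bar y,0)\le C|\bar y|^{-1/(p_i-1)}$ with a Harnack inequality on dyadic half-annuli (valid because the interior equation is linear and the boundary coefficient $(n-2)\hat f_i^{-\tau_i}v_i^{p_i-1}$ is controlled on each such annulus), which already gives $\sup_{B_R^+}v_i\le C(R)$ without invoking a fixed-size local-max neighborhood. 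With that adjustment your argument is complete; the Liouville theorem you cite (Hu, Li--Zhu, Escobar) does apply to the limit $v$ because the annular Harnack bound also shows $v$ is globally bounded.
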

Given $x_{i}\rightarrow x_{0}$ an isolated blow up point for $\left\{ u_{i}\right\} _{i}$,
and given $\psi_{i}:B_{\rho}^{+}(0)\rightarrow M$ the Fermi coordinates
centered at $x_{i}$, we define the spherical average of $u_{i}$
as
\[
\bar{u}_{i}(r)=\frac{2}{\omega_{n-1}r^{n-1}}\int_{\partial^{+}B_{r}^{+}}u_{i}\circ\psi_{i}d\sigma_{r}
\]
and
\[
w_{i}(r):=r^{-\frac{1}{p_{i}-1}}\bar{u}_{i}(r)
\]
for $0<r<\rho.$
\begin{defn}
\label{def:isolatedsimple}We say that $x_{i}\rightarrow x_{0}$ is
an \emph{isolated simple} blow up point for $\left\{ u_{i}\right\} _{i}$
solutions of (\ref{eq:Prob-i}) if $x_{i}\rightarrow x_{0}$ is an
isolated blow up point for $\left\{ u_{i}\right\} _{i}$ and there
exists $\rho$ such that $w_{i}$ has exactly one critical point in
the interval $(0,\rho)$. 
\end{defn}
One can prove that is $x_{i}\rightarrow x_{0}$ is an isolated simple
blow up point for $\left\{ u_{i}\right\} _{i}$, and if $R_{i}\rightarrow+\infty,$
then 
\[
w_{i}'(r)<0\text{ for all }r\in[R_{i}M_{i}^{1-p_{i}},\rho).
\]
This allows to compare this definition of isolated simple blow up
point with the other one present in literature (see, e.g. \cite{FA}).
In fact, in light of Proposition \ref{prop:4.1}, if $x_{i}\rightarrow x_{0}$
is an isolated blow up point for $\left\{ u_{i}\right\} _{i}$ then
the function $r\rightarrow r^{\frac{1}{p_{i}-1}}\bar{u}_{i}(r)$ has
exactly one critical point in $(0,R_{i}M_{i}^{1-p_{i}})$ and the
derivative is negative right after the critical point.
\begin{prop}
\label{prop:Lemma 4.4}Let $x_{i}\rightarrow x_{0}$ be an isolated
simple blow up point for $\left\{ u_{i}\right\} _{i}$ and let $\eta$
small. Then there exist $C,\rho>0$ such that 
\[
M_{i}^{\lambda_{i}}|\nabla^{k}u_{i}(\psi_{i}(y))|\le C|y|^{2-k-n+\eta}
\]
for $y\in B_{\rho}^{+}(0)\smallsetminus\left\{ 0\right\} $ and $k=0,1,2$.
Here $\lambda_{i}=(p_{i}-1)(n-2-\eta)-1$.
\end{prop}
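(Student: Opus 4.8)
The plan is to establish the pointwise decay estimate by a standard bootstrap/interpolation argument built on the blow-up normalization of Proposition \ref{prop:4.1}, combined with the isolated-simple structure of the blow-up point. First I would record the rescaled picture: by Proposition \ref{prop:4.1}, after rescaling by $M_i^{p_i-1}$ the solution $v_i$ converges in $C^2_{\mathrm{loc}}$ to the bubble $U$, which decays like $|y|^{2-n}$ at infinity; so on the fixed-scale ball $B_{R_i}^+(0)$ the estimate with $k=0,1,2$ holds with the exponent $2-k-n$ (and the $\eta$ is pure slack). The content of the proposition is therefore entirely about the intermediate region $R_i M_i^{1-p_i}\le |y|\le \rho$, where the blow-up profile is no longer visible.

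The core of the argument is a Harnack-type / maximum-principle comparison in this annular region. First I would prove the weak estimate $u_i(\psi_i(y))\le C M_i^{-\lambda_i}|y|^{2-n+\eta}$ for $k=0$. The mechanism: set $\Lambda_i = M_i^{(p_i-1)(n-2-\eta)}$; the function $r\mapsto r^{\frac{1}{p_i-1}}\bar u_i(r)$ is (for $r$ past its unique critical point, which lies at scale $M_i^{1-p_i}$ by isolated-simpleness and Proposition \ref{prop:4.1}) decreasing, which forces $\bar u_i(r)\le C M_i^{-\lambda_i} r^{-(n-2)+\eta}$ on the annulus; note here that $\frac{1}{p_i-1}\to n-2$ and the $\eta$ absorbs the error between $\frac{1}{p_i-1}$ and $n-2-\eta$ as well as the fact that $M_i^{1-p_i}$ is the bubble scale. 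One then upgrades the spherical-average bound $\bar u_i$ to a pointwise bound on $u_i$ via a Harnack inequality applied on dyadic half-annuli $B_{2r}^+\setminus B_{r/2}^+$: since $L_{g_i}u_i=0$ in the interior with the Neumann-type boundary condition $B_{g_i}u_i = -(n-2)f_i^{-\tau_i}u_i^{p_i}$ on $\partial M$, and since on such an annulus the coefficient $f_i^{-\tau_i}u_i^{p_i-1}$ is controlled (by the isolated blow-up bound $u_i(x)\le C d_{\bar g}(x,x_i)^{-1/(p_i-1)}$ the boundary nonlinearity behaves like a bounded potential after scaling), the boundary Harnack inequality gives $\sup_{B_{2r}^+\setminus B_{r/2}^+} u_i \le C\,\bar u_i(r)$. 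Chaining these on the dyadic annuli between $R_i M_i^{1-p_i}$ and $\rho$ yields the $k=0$ case.

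For $k=1,2$ I would rescale locally: fix $y$ with $|y|=r$ in the annulus, set $\bar u_i^{r}(z)=u_i(\psi_i(rz))$ on $B_{2}^+(y/r)\setminus B_{1/2}^+(y/r)$; this satisfies a uniformly elliptic equation with the rescaled metric $g_i(r\cdot)$ (which is $C^2$-close to flat on this fixed-size region since $g_i\to g_0$ in $C^3_{\mathrm{loc}}$), and the rescaled boundary condition has coefficient $r\cdot f_i^{-\tau_i}(\bar u_i^r)^{p_i-1}$, which is bounded by the $k=0$ estimate just proved (using $\tau_i\to 0$ and $p_i-1\to \tfrac{2}{n-2}$). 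Interior and boundary elliptic $L^p$/Schauder estimates then give $|\nabla \bar u_i^r|, |\nabla^2 \bar u_i^r| \le C \sup \bar u_i^r \le C M_i^{-\lambda_i} r^{2-n+\eta}$ on the unit-scale annulus, and unscaling (one factor of $r^{-1}$ per derivative) produces exactly $|\nabla^k u_i(\psi_i(y))|\le C M_i^{-\lambda_i}|y|^{2-k-n+\eta}$, i.e. $M_i^{\lambda_i}|\nabla^k u_i(\psi_i(y))|\le C|y|^{2-k-n+\eta}$.

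The main obstacle is the $k=0$ step, specifically extracting the decay $\bar u_i(r)\le C M_i^{-\lambda_i} r^{-(n-2)+\eta}$ uniformly up to $r=\rho$: one must show the function $w_i(r)=r^{1/(p_i-1)}\bar u_i(r)$ genuinely has its only critical point at the bubble scale and is monotone thereafter (this is where \emph{isolated simple} and the remark following Definition \ref{def:isolatedsimple} are used), and one must handle the nonlinear boundary term carefully — a priori $u_i^{p_i-1}$ on the boundary is only controlled by the isolated blow-up bound, so the Harnack constant must be shown to be uniform in $i$. The exponent bookkeeping — reconciling $\lambda_i=(p_i-1)(n-2-\eta)-1$, the scaling exponent $M_i^{1-p_i}$, and $\frac{1}{p_i-1}\to n-2$ — is what forces the $\eta$-loss and must be tracked precisely; everything else is routine elliptic regularity. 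I would refer to \cite{Al,FA,HL,M3} for the details of this now-standard scheme.
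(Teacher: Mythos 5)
The paper offers no proof of Proposition \ref{prop:Lemma 4.4}; it is listed among the results declared ``now quite standard'' with a pointer to \cite{Al,FA,HL,M3}. So there is no in-paper argument to compare against, and the review below concerns the correctness of your argument on its own terms. There is a genuine gap in the core $k=0$ step.

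You assert that the monotonicity of $w_i(r)=r^{1/(p_i-1)}\bar u_i(r)$ past its unique critical point ``forces $\bar u_i(r)\le C M_i^{-\lambda_i}r^{-(n-2)+\eta}$''. It does not. Note first that $1/(p_i-1)\to (n-2)/2$, not $n-2$, so monotonicity gives $\bar u_i(r)\le w_i(r_1)\,r^{-1/(p_i-1)}\approx w_i(r_1)\,r^{-(n-2)/2}$ with $r_1=R_iM_i^{1-p_i}$ --- already the wrong power of $r$ compared with $r^{2-n+\eta}$. Moreover, from $v_i\to U$ and $\bar u_i(r_1)\approx M_i\,U(R_i)\approx M_iR_i^{2-n}$ one computes
\[
w_i(r_1)=(R_iM_i^{1-p_i})^{1/(p_i-1)}\,\bar u_i(r_1)\approx R_i^{1/(p_i-1)+2-n}\approx R_i^{-(n-2)/2},
\]
so at best $\bar u_i(\rho)\lesssim R_i^{-(n-2)/2}$. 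This does tend to $0$, but only through the slowly growing $R_i$: Proposition \ref{prop:4.1}(2) gives $R_i=o(\log M_i)$, hence $R_i^{-(n-2)/2}$ is far larger than $M_i^{-\epsilon}$ for every $\epsilon>0$. Since $\lambda_i\to 1-\tfrac{2\eta}{n-2}>0$, the bound you need, $\bar u_i(\rho)\lesssim M_i^{-\lambda_i}$, is essentially a full negative power of $M_i$. Monotonicity plus Harnack chaining cannot manufacture that factor; the dyadic Harnack step only transfers spherical averages to pointwise bounds on each annulus and introduces no extra decay in $M_i$.

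The missing ingredient in the intermediate region $r_1\le|y|\le\rho$ is a maximum-principle comparison against a barrier roughly of the shape $\phi_i=A\,M_i^{-\lambda_i}|y|^{2-n+\eta}+B\,\bar u_i(\rho)\,|y|^{-\eta}$. Both powers $|y|^{2-n+\eta}$ and $|y|^{-\eta}$ are superharmonic, so $L_{g_i}\phi_i\le 0$ for $\rho$ small once the conformal-Fermi error terms are absorbed; on $\partial'$ one checks $B_{g_i}\phi_i+(n-2)f_i^{-\tau_i}u_i^{p_i-1}\phi_i\ge 0$ using $h_{g_i}=O(|y|^4)$ and the isolated blow-up bound $u_i^{p_i-1}\le C|y|^{-1}$; and on the inner sphere $|y|=r_1$ the identity $M_i^{-\lambda_i}r_1^{2-n+\eta}=M_iR_i^{2-n+\eta}$ shows the first term dominates $u_i\approx M_iR_i^{2-n}$, while on $|y|=\rho$ the second term (with $\bar u_i(\rho)$) dominates. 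The isolated-simple monotonicity is used only to control $\bar u_i(\rho)$ and to seed this comparison, not to produce the decay by itself. Your treatment of the $k=1,2$ cases --- unit-scale rescaling plus interior and boundary Schauder/$L^p$ estimates on dyadic annuli --- is standard and would be correct once the $k=0$ bound is established.
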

\begin{prop}
\label{prop:4.3}Let $x_{i}\rightarrow x_{0}$ be an isolated simple
blow up point for $\left\{ u_{i}\right\} _{i}$. Then there exist
$C,\rho>0$ such that 
\begin{enumerate}
\item $M_{i}u_{i}(\psi_{i}(y))\le C|y|^{2-n}$ for all $y\in B_{\rho}^{+}(0)\smallsetminus\left\{ 0\right\} $;
\item $M_{i}u_{i}(\psi_{i}(y))\ge C^{-1}G_{i}(y)$ for all $y\in B_{\rho}^{+}(0)\smallsetminus B_{r_{i}}^{+}(0)$
where $r_{i}:=R_{i}M_{i}^{1-p_{i}}$ and $G_{i}$ is the Green\textquoteright s
function which solves
\[
\left\{ \begin{array}{ccc}
L_{g_{i}}G_{i}=0 &  & \text{in }B_{\rho}^{+}(0)\smallsetminus\left\{ 0\right\} \\
G_{i}=0 &  & \text{on }\partial^{+}B_{\rho}^{+}(0)\\
B_{g_{i}}G_{i}=0 &  & \text{on }\partial'B_{\rho}^{+}(0)\smallsetminus\left\{ 0\right\} 
\end{array}\right.
\]
\end{enumerate}
and $|y|^{n-2}G_{i}(y)\rightarrow1$ as $|z|\rightarrow0$.
\end{prop}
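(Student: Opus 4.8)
The plan is to prove both statements by comparison with barriers, exploiting that $x_i \to x_0$ is an \emph{isolated simple} blow up point together with the decay already obtained in Proposition \ref{prop:Lemma 4.4} and the Harnack-type information implicit in Definition \ref{def:isolatedsimple}. First I would fix $\rho$ as in Proposition \ref{prop:Lemma 4.4} and work in Fermi coordinates $\psi_i$ centered at $x_i$, writing $U_i(y) = M_i u_i(\psi_i(y))$. Since $L_{g_i} u_i = 0$ in $M$ and $B_{g_i} u_i + (n-2) f_i^{-\tau_i} u_i^{p_i} = 0$ on $\partial M$, the rescaled function satisfies $L_{g_i} U_i = 0$ in $B_\rho^+(0)$ and $B_{g_i} U_i + (n-2) f_i^{-\tau_i} M_i^{1-p_i} U_i^{p_i} = 0$ on $\partial' B_\rho^+(0)$. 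From Proposition \ref{prop:Lemma 4.4} with a small $\eta$ we already know $U_i(y) \le C |y|^{2-n+\eta}$ on $B_\rho^+$; the point of (1) is to remove the $\eta$.

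For statement (1), I would proceed in the standard way: set $r_i = R_i M_i^{1-p_i}$ and, on the annular region $B_\rho^+(0)\smallsetminus B_{r_i}^+(0)$, compare $U_i$ with a suitable supersolution. The natural candidate is a multiple of $G_i$ plus a correction, or more simply a barrier of the form $A|y|^{2-n}$: on $\partial^+ B_\rho^+$ one uses Proposition \ref{prop:Lemma 4.4} to bound $U_i$ by a constant (so $A \rho^{2-n} \ge C$ suffices there), on $\partial^+ B_{r_i}^+$ one uses Proposition \ref{prop:4.1}(1), which gives $v_i \approx U$ on $B_{R_i}^+$ and hence $U_i(y) \le C r_i^{2-n}|y|^{\text{(rescaled)}}$ — more precisely $M_i u_i \approx M_i^2 v_i(M_i^{p_i-1}y)$ with $v_i$ close to the bubble $U$, whose decay is $|y|^{2-n}$, so on the sphere $|y| = r_i$ one gets $U_i \lesssim r_i^{2-n}$ matching the barrier. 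On $\partial' B_\rho^+$, the boundary operator applied to the barrier is handled because $\partial_{y_n}(|y|^{2-n}) = (2-n) y_n |y|^{-n} \le 0$ on $\{y_n = 0\}$ (it vanishes), while $\tfrac{n-2}{2} h_{g_i}$ is bounded and the nonlinear term $(n-2) f_i^{-\tau_i} M_i^{1-p_i} U_i^{p_i}$ has the right sign (it only helps, being a sink). The maximum principle for $L_{g_i}$ with the oblique boundary condition $B_{g_i}$ (valid since the manifold is of positive type, so $L_{g_i}$ satisfies the maximum principle and the first boundary eigenvalue is positive) then yields $U_i \le A |y|^{2-n}$ on the annulus; combined with Proposition \ref{prop:4.1}(1) on the inner ball $B_{r_i}^+$ this gives (1) on all of $B_\rho^+\smallsetminus\{0\}$.

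For statement (2), the lower bound, I would again use the maximum principle but now with $c^{-1} G_i$ as a \emph{subsolution} candidate on $B_\rho^+(0)\smallsetminus B_{r_i}^+(0)$: $G_i$ is $L_{g_i}$-harmonic and satisfies the homogeneous boundary conditions, so $M_i u_i - c^{-1} G_i$ is a supersolution of the linear problem (the extra nonlinear term in the equation for $u_i$ has favorable sign for this comparison as well). On $\partial^+ B_\rho^+$ we have $G_i = 0 \le M_i u_i$, and on $\partial^+ B_{r_i}^+$ the matching comes from Proposition \ref{prop:4.1}(1) (so $M_i u_i \ge \tfrac12 M_i^2 U(M_i^{p_i-1}y) \gtrsim r_i^{2-n}$) together with the normalization $|y|^{n-2} G_i(y) \to 1$, which forces $G_i(y) \lesssim r_i^{2-n}$ on that sphere; choosing $c$ large enough makes $c^{-1}G_i \le M_i u_i$ there. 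The maximum principle then propagates the inequality to the whole annulus. The normalization statement $|y|^{n-2} G_i(y) \to 1$ is the classical expansion of the Green's function of the conformal Laplacian with Neumann-type boundary condition at its pole and follows from the fact that $g_i \to g_0$ in $C^3_{\mathrm{loc}}$ together with $|\det g_i| = 1 + O(|y|^n)$ in conformal Fermi coordinates (Remark \ref{rem:confnorm}).

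The main obstacle is making the matching on the inner sphere $\{|y| = r_i\}$ fully rigorous and uniform in $i$: one must carefully track how Proposition \ref{prop:4.1}(1), which controls $v_i - U$ only on fixed large balls $B_{R_i}^+$, translates into a two-sided bound $c^{-1} r_i^{2-n} \le M_i u_i \le c\, r_i^{2-n}$ on the corresponding sphere in the $u_i$ variable, using that $r_i = R_i M_i^{1-p_i}$ and $M_i^2 U(R_i e) \sim M_i^{2} R_i^{2-n}$, and to check this is compatible with the $\eta$-loss in Proposition \ref{prop:Lemma 4.4} being absorbed. Since this is a standard argument in the isolated-simple blow up literature, I would only sketch it and refer to \cite{Al,FA,HL,M3} for the routine details.
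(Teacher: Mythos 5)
The paper does not include its own proof of Proposition~\ref{prop:4.3}; it cites \cite{Al,FA,HL,M3} for the standard argument. Your proposal correctly identifies the general shape of that argument (a comparison/barrier estimate on the annulus $B_\rho^+\smallsetminus B_{r_i}^+$), but there is a genuine gap that makes the barrier argument as written fail before it starts.

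The problem is the claimed bound on the \emph{outer} sphere. You assert that Proposition~\ref{prop:Lemma 4.4} bounds $U_i=M_iu_i$ by a constant on $\partial^+B_\rho^+$. It does not. Proposition~\ref{prop:Lemma 4.4} gives $M_i^{\lambda_i}u_i\le C|y|^{2-n+\eta}$ with $\lambda_i=(p_i-1)(n-2-\eta)-1$. Multiplying by $M_i^{1-\lambda_i}$ one gets $M_iu_i(\rho e)\le CM_i^{1-\lambda_i}\rho^{2-n+\eta}$, and since $1-\lambda_i=\tau_i(n-2)+(p_i-1)\eta\to \tfrac{2\eta}{n-2}>0$, the factor $M_i^{1-\lambda_i}\to\infty$. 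So this estimate cannot supply a uniform bound for $U_i$ on $\partial^+B_\rho^+$, and the comparison function $A|y|^{2-n}$ is not above $U_i$ on that part of the boundary. The $\eta$-loss in Proposition~\ref{prop:Lemma 4.4} is \emph{not} absorbable directly, because the constant there depends on $\eta$, and the exponent $\lambda_i$ degrades to $1-\tfrac{2\eta}{n-2}<1$.

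A closely related issue is at the \emph{inner} sphere, which you flag as ``the main obstacle'' but do not resolve. From Proposition~\ref{prop:4.1}(1), $M_iu_i(r_ie)\approx M_i^2U(R_ie)\approx M_i^2R_i^{2-n}$, while $r_i^{2-n}=(R_i\delta_i)^{2-n}=R_i^{2-n}M_i^{(p_i-1)(n-2)}=R_i^{2-n}M_i^{2-\tau_i(n-2)}$. Hence $M_iu_i(r_ie)/r_i^{2-n}=M_i^{\tau_i(n-2)}$, and the matching you need ($U_i\lesssim r_i^{2-n}$) holds \emph{only} if one has already established that $M_i^{\tau_i}$ is bounded. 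This is the nontrivial preliminary step that the standard proofs (Li--Zhu, Han--Li, Almaraz) establish first, by a separate argument (integrating the boundary equation / a Pohozaev-type inequality against the $\eta$-decay of Proposition~\ref{prop:Lemma 4.4}); only after that is the sharp decay in part~(1) deduced, typically via a Green-representation bootstrap rather than a single one-shot barrier. Your sketch omits this step entirely, and both the inner- and outer-sphere boundary data for the barrier fail without it.

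There is also a minor technical point you pass over: the barrier $A|y|^{2-n}$ is only approximately a supersolution, since $L_{g_i}|y|^{2-n}\neq0$ and $B_{g_i}|y|^{2-n}=\tfrac{n-2}{2}h_{g_i}|y|^{2-n}=O(|y|^{6-n})$ on $\partial'B_\rho^+$ can have the wrong sign; one must add a lower-order correction (e.g.\ $-\varepsilon|y|^{3-n}$) to $\phi$ to make the comparison principle apply. This is routine, but as written your argument glosses over the comparison principle for the oblique (Robin) boundary condition, which is less trivial than invoking positivity of the conformal Laplacian. For part~(2), the lower bound, your outline is structurally correct but again needs the same $M_i^{\tau_i}$ control on the inner sphere to show $c^{-1}G_i\le M_iu_i$ there.
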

Let us notice that, by Proposition \ref{prop:4.1} and by Proposition
\ref{prop:4.3} we have that, if $x_{i}\rightarrow x_{0}$ is an isolated
simple blow up point for $\left\{ u_{i}\right\} _{i}$, then, given
$v_{i}$ as in Proposition \ref{prop:4.1} it holds
\[
v_{i}\le CU\text{ in }B_{\rho M_{i}^{p_{i}-1}}^{+}(0).
\]

\section{\label{sec:estimates}Blow up estimates}

In this section $x_{i}\rightarrow x_{0}$ is an isolated simple blow
up point for a sequence $\left\{ u_{i}\right\} _{i}$ of solutions
of (\ref{eq:Prob-i}). We will work in the conformal normal coordinates
in a neighborhood of $x_{i}$.

Set $\tilde{u}_{i}=\Lambda_{x_{i}}^{-1}u_{i}$ we define
\begin{equation}
\delta_{i}=\tilde{u}_{i}^{1-p_{i}}(x_{i})=u_{i}^{1-p_{i}}(x_{i})=M_{i}^{1-p_{i}},\label{eq:deltai}
\end{equation}
since $\Lambda_{x_{i}}(x_{i})=1$. 

We have that $x_{i}\rightarrow x_{0}$ is also an isolated blow up
point for the function $\tilde{u}_{i}$ and the estimates of Proposition
\ref{prop:4.3} hold since we have uniform control on the conformal
factor $\Lambda_{i}$. In the following we simply omit the \emph{tilde}
symbol unless otherwise specified. 

Set 
\[
v_{i}(y):=\delta_{i}^{\frac{1}{p_{i}-1}}u_{i}(\delta_{i}y)\text{ for }y\in B_{\frac{R}{\delta_{i}}}^{+}(0),
\]
we know that $v_{i}$ satisfies 
\begin{equation}
\left\{ \begin{array}{cc}
L_{\hat{g}_{i}}v_{i}=0 & \text{ in }B_{\frac{R}{\delta_{i}}}^{+}(0)\\
B_{\hat{g}_{i}}v_{i}+(n-2)\hat{f}_{i}^{-\tau_{i}}v_{i}^{p_{i}}=0 & \text{ on }\partial'B_{\frac{R}{\delta_{i}}}^{+}(0)
\end{array}\right.\label{eq:Prob-hat}
\end{equation}
where $\hat{g}_{i}:=\tilde{g}_{i}(\delta_{i}y)=\Lambda_{x_{i}}^{\frac{4}{n-2}}(\delta_{i}y)g(\delta_{i}y)$,
$\hat{f}_{i}(y)=f_{i}(\delta_{i}y)$, $f_{i}=\Lambda_{x_{i}}f\rightarrow\Lambda_{x_{0}}f$
and $\tau_{i}=\frac{n}{n-2}-p_{i}$. 

Our aim is to provide by Lemma \ref{lem:vq} a sharp correction term
for the usual approximation of the rescaled solution $v$ by $U$,
near an isolated simple blow up point $x_{i}\rightarrow x_{0}$. This
result is obtained in Proposition \ref{prop:stimawi} at the end of
this section. First, we need two lemmas.
\begin{lem}
\label{lem:coreLemma}Assume $n\ge8$. Let $\gamma_{x_{i}}$ be defined
in (\ref{eq:vqdef}). There exist $R,C>0$ such that 
\[
|v_{i}(y)-U(y)-\delta_{i}^{2}\gamma_{x_{i}}(y)|\le C\left(\delta_{i}^{3}+\tau_{i}\right)
\]
for $|y|\le R/\delta_{i}$.
\end{lem}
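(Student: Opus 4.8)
The plan is to prove Lemma \ref{lem:coreLemma} by a linearization-plus-elliptic-estimates argument applied to the difference $w_i := v_i - U - \delta_i^2\gamma_{x_i}$, showing it solves an equation whose right-hand side is $O(\delta_i^3+\tau_i)$ and then invoking a uniform invertibility of the linearized operator on the orthogonal complement of the kernel. First I would write out the equation satisfied by $v_i$ in the rescaled conformal Fermi coordinates, expanding $\hat g_i$ using the Taylor expansion of Remark \ref{rem:confnorm}. Schematically, $L_{\hat g_i}v_i = \Delta v_i + (\text{terms from } g_i^{ab}-\delta^{ab}) \partial^2 v_i + (\text{lower order in } \partial g_i)\partial v_i + \frac{n-2}{4(n-1)}R_{\hat g_i}v_i = 0$. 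Since $\hat g_i(y) = g_i(\delta_i y)$, the leading correction to the metric is $g_i^{ij}(\delta_i y) = \delta^{ij} + \delta_i^2\big(\tfrac13\bar R_{ikjl}(x_i)y_ky_l + R_{ninj}(x_i)y_n^2\big) + O(\delta_i^3|y|^3)$, by \eqref{eq:gij}, and the scalar curvature term is $O(\delta_i^2|y|^2)$ because $\bar R_{g_i}(x_i)=0$ by \eqref{eq:Rii}--\eqref{eq:Ricci} and $R_{g_i}$ itself vanishes to order $|y|$ in conformal Fermi coordinates; similarly the boundary condition expands with the mean curvature term being $O(\delta_i^4|y|^4)$ by \eqref{eq:hij}. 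Plugging $v_i = U + \delta_i^2\gamma_{x_i} + w_i$ into this equation and using the defining equation \eqref{eq:vqdef} for $\gamma_{x_i}$, the $\delta_i^2$-order terms cancel exactly (this is the whole point of the definition of $\gamma_{x_i}$), leaving
\begin{equation}
\left\{\begin{array}{ll}
-\Delta w_i = -nU^{\frac{2}{n-2}}\,\text{(quadratic structure)}\,w_i + E_i & \text{in } B_{R/\delta_i}^+(0)\\
\dfrac{\partial w_i}{\partial y_n} + nU^{\frac{2}{n-2}}w_i = E_i' & \text{on } \partial' B_{R/\delta_i}^+(0)
\end{array}\right.\nonumber
\end{equation}
where, more precisely, $-\Delta v_i = \Delta U + \delta_i^2\Delta\gamma_{x_i} + \Delta w_i$ gets matched against the metric-expansion terms acting on $U$, and the nonlinearity $(n-2)\hat f_i^{-\tau_i}v_i^{p_i}$ is linearized around $U$ giving the $nU^{2/(n-2)}$ coefficient on the boundary, with $E_i, E_i'$ collecting all remaining terms. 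The key estimates will be $|E_i| \le C(\delta_i^3 + \tau_i)(1+|y|)^{-\sigma}$ and similarly for $E_i'$, using \eqref{eq:gradvq} to control the decay of $\gamma_{x_i}$ and its derivatives (the factor $(1+|y|)^{4-\tau-n}$ is exactly what makes the cubic-in-$\delta_i$ remainder, which carries extra powers of $|y|$, still integrable/bounded once $n\ge 8$), the bound $|v_i|\le CU$ noted after Proposition \ref{prop:4.3} to control the nonlinear remainder, the estimate $|\hat f_i^{-\tau_i}-1|\le C\tau_i$ for the $\tau$-dependence, and Proposition \ref{prop:Lemma 4.4}/Proposition \ref{prop:4.1} to bound $v_i$ and its derivatives pointwise on the annular regions.

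Next I would run the standard contradiction/rescaling argument: suppose $\Theta_i := \max_{|y|\le R/\delta_i}|w_i(y)|(1+|y|)^{n-2} \to\infty$ (or more simply $\Theta_i/(\delta_i^3+\tau_i)\to\infty$), normalize $\tilde w_i := w_i/\Theta_i$, and extract a limit. The normalized functions satisfy an equation with right-hand side tending to zero in $C^0_{\mathrm{loc}}$, and by interior and boundary elliptic estimates (Schauder or $W^{2,q}$ then Sobolev) $\tilde w_i \to w_\infty$ in $C^2_{\mathrm{loc}}(\overline{\mathbb R^n_+})$ where $w_\infty$ solves the homogeneous linearized problem
\begin{equation}
\left\{\begin{array}{ll}
\Delta w_\infty = 0 & \text{in } \mathbb R^n_+\\
\dfrac{\partial w_\infty}{\partial y_n} + nU^{\frac{2}{n-2}}w_\infty = 0 & \text{on } \partial\mathbb R^n_+
\end{array}\right.\nonumber
\end{equation}
with the decay $|w_\infty(y)|\le C(1+|y|)^{2-n}$. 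By the nondegeneracy of the bubble recalled in the excerpt, $w_\infty$ is a linear combination of $j_1,\dots,j_n$ from \eqref{eq:jl}--\eqref{eq:jn}. Then I would use the orthogonality: $\gamma_{x_i}$ is $L^2$-orthogonal to all $j_l$, $U$ contributes fixed projections, and one arranges (as is standard in this literature, e.g. via choosing $x_i$ to be the maximum point and using Proposition \ref{prop:4.1}(1) to pin down the first-order behavior, plus a Kelvin-transform/Pohozaev normalization for $j_n$) that the projections of $v_i$ onto the kernel directions are themselves $O(\delta_i^3+\tau_i)$, forcing the projection of $w_\infty$ onto each $j_l$ to vanish, hence $w_\infty\equiv 0$. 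This contradicts $\max|\tilde w_i|(1+|y|)^{n-2}=1$ attained somewhere; a Harnack/maximum-principle argument handles the case where the maximum of $\tilde w_i$ escapes to the boundary of the large ball $|y|\sim R/\delta_i$, since there $v_i$ and $U$ are both $\lesssim \delta_i^{n-2}$ up to the Green's function comparison of Proposition \ref{prop:4.3}(2), which is $o(\delta_i^3+\tau_i)\cdot(\delta_i/R)^{-(n-2)}$ — wait, more carefully, the outer boundary contribution is controlled by the global bound $|v_i|\le CU$ plus $|\gamma_{x_i}|\le C(1+|y|)^{4-n}$, so $|w_i|\le C(1+|y|)^{4-n}$ near $|y|=R/\delta_i$, giving $|w_i|(1+|y|)^{n-2}\le C(1+|y|)^2 = O(\delta_i^{-2})$, which is too weak; instead one works with the weight $(1+|y|)^{n-4}$ or splits into the region $|y|\le R$ where $C^2$ convergence applies directly and $R\le|y|\le R/\delta_i$ where a maximum principle with barrier $\varepsilon(|y|^{2-n}+\delta_i^2(1+|y|)^{4-n})$ and the smallness of $nU^{2/(n-2)}$ for large $|y|$ closes the estimate.

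The main obstacle, I expect, is the maximum-principle step on the large annulus $R \le |y| \le R/\delta_i$: the linearized boundary operator has the potential $nU^{2/(n-2)}$ which is positive but decays like $|y|^{-2}$, so one cannot directly apply a naive maximum principle, and one must construct an explicit supersolution (barrier) — typically of the form $\varepsilon\big(|y|^{2-n} + \delta_i^2|y|^{4-n}\big)$ plus possibly a term handling the boundary potential — verify it dominates $|E_i|$, $|E_i'|$ there, and absorb the bad sign of the potential using that $nU^{2/(n-2)}\le C|y|^{-2} < $ (something small) once $|y|\ge R$ with $R$ large. Getting the constant $R$ uniform and the powers to match (this is exactly where $n\ge 8$ enters — for smaller $n$ the $\gamma_{x_i}$-correction of size $\delta_i^2(1+|y|)^{4-n}$ would not be dominated by $(1+|y|)^{2-n}$ times an $o(1)$ quantity on the relevant scale) is the delicate bookkeeping. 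A secondary, more technical obstacle is verifying that the kernel-projection conditions on $v_i$ are genuinely $O(\delta_i^3+\tau_i)$ and not merely $o(1)$; this requires using the precise choice of conformal Fermi coordinates (so $\Lambda_{x_i}(x_i)=1$, $\partial_k\Lambda_{x_i}(x_i)=0$) together with $x_i$ being the local maximum of $u_i|_{\partial M}$ and the Pohozaev identity to kill the $j_n$-direction, mirroring \cite[Proposition 5.1]{Al}.
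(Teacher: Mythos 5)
Your overall strategy---set $w_i:=v_i-U-\delta_i^2\gamma_{x_i}$, use the definition of $\gamma_{x_i}$ in (\ref{eq:vqdef}) to cancel the $\delta_i^2$-order terms, run a rescaling/contradiction argument, pass to a limit solving the homogeneous linearized problem, and invoke the nondegeneracy classification---matches the paper's. But two of your key steps have genuine gaps, and in both cases the paper's actual argument is simpler than what you propose.

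\emph{The conclusion $w_\infty\equiv 0$.} You propose to show that the $L^2$-projections of $v_i$ onto the kernel directions are $O(\delta_i^3+\tau_i)$, invoking the Pohozaev identity to kill the $j_n$-direction. This is not what happens, and it is not clear the projections are that small: the blow-up sequence $v_i$ is normalized by $v_i(0)=1$ and by $x_i$ being a local maximum, not by any $L^2$-orthogonality, so a Lyapunov--Schmidt-type projection estimate is not automatic and would require its own proof. The paper's argument is purely pointwise and much more elementary: $w_i(0)=\mu_i^{-1}\big(v_i(0)-U(0)-\delta_i^2\gamma_{x_i}(0)\big)=\mu_i^{-1}(1-1-0)=0$ exactly, and $\partial_j w_i(0)=0$ exactly for $j=1,\dots,n-1$ because $\partial_j v_i(0)=0$ (local maximum), $\partial_j U(0)=0$, and $\partial_j\gamma_{x_i}(0)=0$ by (\ref{eq:dervq}). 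Passing to the limit gives $w(0)=\partial_1 w(0)=\dots=\partial_{n-1}w(0)=0$, and since $j_n(0)=\frac{n-2}{2}\ne 0$ while $\partial_k j_k(0)=-(n-2)\ne0$ (with vanishing cross-terms), these $n$ linear conditions kill every element of $\operatorname{span}\{j_1,\dots,j_n\}$. No Pohozaev identity, no $L^2$-orthogonality is needed here; the orthogonality in Lemma \ref{lem:vq} is used later in Lemma \ref{lem:R(U,vq)}, not at this step.

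\emph{The weight and the outer boundary.} Your weighted max $\Theta_i=\max|w_i|(1+|y|)^{n-2}$ does not work, as you notice yourself, and switching to $(1+|y|)^{n-4}$ or constructing barriers on the annulus $R\le|y|\le R/\delta_i$ is an unnecessary complication. The paper simply takes $\mu_i=\max_{|y|\le R/\delta_i}|w_i|$, observes that if the maximizer $y_i$ satisfies $|y_i|>c/\delta_i$ then the bound $|w_i(y_i)|\le C(|y_i|^{2-n}+\delta_i^2|y_i|^{4-n})\le C\delta_i^{n-2}$ already proves the lemma, and hence reduces to $|y_i|\le R/(2\delta_i)$. The decay $|w|\le C(1+|y|)^{-1}$ is then obtained not by a barrier but by the Green's representation formula together with the Riesz-potential estimate (\ref{eq:ALstimagreen}); it is precisely the estimate on the term coming from $\bar Q_i$, of size $(1+|\bar\xi|)^{5-n}$ on the boundary, that forces $n-5>2$, i.e. $n\ge 8$. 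Your placement of the $n\ge 8$ restriction in the barrier step is therefore not where it actually bites.

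In short: the skeleton of the proposal is right, but the way you propose to close the argument (kernel projections plus Pohozaev, and a weighted max with barriers) does not work as stated and misses the elementary pointwise normalization at the origin and the Green's-function decay estimate that make the paper's proof go through.
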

\begin{proof}
Let $y_{i}$ such that 
\[
\mu_{i}:=\max_{|y|\le R/\delta_{i}}|v_{i}(y)-U(y)-\delta_{i}^{2}\gamma_{x_{i}}(y)|=|v_{i}(y_{i})-U(y_{i})-\delta_{i}^{2}\gamma_{x_{i}}(y_{i})|.
\]
We can assume, without loss of generality, that $|y_{i}|\le\frac{R}{2\delta_{i}}.$

In fact, suppose that there exists $c>0$ such that $|y_{i}|>\frac{c}{\delta_{i}}$
for all $i$. Then, since $v_{i}(y)\le CU(y)$, and by (\ref{eq:gradvq}),
we get the inequality
\[
|v_{i}(y_{i})-U(y_{i})-\delta_{i}^{2}\gamma_{x_{i}}(y_{i})|\le C\left(|y_{i}|^{2-n}+\delta_{i}^{2}|y_{i}|^{4-n}\right)\le C\delta_{i}^{n-2}
\]
which proves the Lemma. So, in the next we will suppose $|y_{i}|\le\frac{R}{2\delta_{i}}$.
This condition will be exploited later.

To achieve the proof we proceed by contradiction, supposing that 
\begin{equation}
\max\left\{ \mu_{i}^{-1}\delta_{i}^{3},\mu_{i}^{-1}\tau_{i}\right\} \rightarrow0\text{ when }i\rightarrow\infty.\label{eq:ipass}
\end{equation}
Defined 
\[
w_{i}(y):=\mu_{i}^{-1}\left(v_{i}(y)-U(y)-\delta_{i}^{2}\gamma_{x_{i}}(y)\right)\text{ for }|y|\le R/\delta_{i},
\]
we have, by direct computation, that $w_{i}$ satisfies 
\begin{equation}
\left\{ \begin{array}{cc}
L_{\hat{g}_{i}}w_{i}=Q_{i} & \text{ in }B_{\frac{R}{\delta_{i}}}^{+}(0)\\
B_{\hat{g}_{i}}w_{i}+b_{i}w_{i}=\bar{Q}_{i} & \text{ on }\partial'B_{\frac{R}{\delta_{i}}}^{+}(0)
\end{array}\right.\label{eq:wi}
\end{equation}
where 
\begin{align*}
b_{i}= & (n-2)\hat{f}_{i}^{-\tau_{i}}\frac{v_{i}^{p_{i}}-(U+\delta_{i}^{2}\gamma_{x_{i}})^{p_{i}}}{v_{i}-U-\delta_{i}^{2}\gamma_{x_{i}}}\\
\bar{Q}_{i}= & -\frac{1}{\mu_{i}}\left\{ (n-2)(U+\delta_{i}^{2}\gamma_{x_{i}})^{\frac{n}{n-2}}-(n-2)U^{\frac{n}{n-2}}-n\delta_{i}^{2}U^{\frac{2}{n-2}}\gamma_{x_{i}}-\frac{n-2}{2}h_{\hat{g}_{i}}(U+\delta_{i}^{2}\gamma_{x_{i}})\right\} \\
 & +\frac{n-2}{\mu_{i}}\left\{ (U+\delta_{i}^{2}\gamma_{x_{i}})^{\frac{n}{n-2}}-\hat{f}_{i}^{-\tau_{i}}(U+\delta_{i}^{2}\gamma_{x_{i}})^{p_{i}}\right\} =:\bar{Q}_{i,1}+\bar{Q}_{i,2}\\
Q_{i}= & -\frac{1}{\mu_{i}}\left\{ \left(L_{\hat{g}_{i}}-\Delta\right)(U+\delta_{i}^{2}\gamma_{x_{i}})+\delta_{i}^{2}\Delta\gamma_{x_{i}}\right\} .
\end{align*}
We give now some estimate for the terms $b_{i},Q_{i,}\bar{Q}_{i}$
in order to show that the sequence $w_{i}$ converges in $C_{\text{loc}}^{2}(\mathbb{R}_{+}^{n})$
to some $w$ solution of 
\begin{equation}
\left\{ \begin{array}{cc}
\Delta w=0 & \text{ in }\mathbb{R}_{+}^{n}\\
\frac{\partial}{\partial\nu}w+nU^{\frac{n}{n-2}}w=0 & \text{ on }\partial\mathbb{R}_{+}^{n}
\end{array}\right..\label{eq:diff-w}
\end{equation}
Then we will derive a contradiction using (\ref{eq:ipass}). 

By Lagrange Theorem we have 
\[
b_{i}=(n-2)p_{i}\hat{f}_{i}^{-\tau_{i}}\left[\theta v_{i}^{p_{i}-1}-(1-\theta)(U+\delta_{i}^{2}\gamma_{x_{i}})^{p_{i}-1}\right]
\]
and, since $v_{i}\rightarrow U$ in $C_{\text{loc}}^{2}(\mathbb{R}_{+}^{n})$,
we have, at once, 
\begin{align}
b_{i} & \rightarrow nU^{\frac{2}{n-2}}\text{ in }C_{\text{loc}}^{2}(\mathbb{R}_{+}^{n});\label{eq:b1}\\
|b_{i}(y)| & \le(1+|y|)^{-2}\text{ for }|y|\le R/\delta_{i}.\label{eq:b2}
\end{align}
We proceed now by estimating $Q_{i}$ and $\bar{Q}_{i}$. We recall
that 
\begin{align}
[L_{\hat{g}_{i}}-\Delta]u(y)= & \left(\hat{g}_{i}^{kl}-\delta^{kl}\right)\partial_{k}\partial_{l}u+\partial_{k}\hat{g}_{i}^{kl}\partial_{l}u-\frac{n-2}{4(n-1)}R_{\hat{g}_{i}}u\nonumber \\
 & +\frac{\partial_{k}|\hat{g}_{i}|^{\frac{1}{2}}}{|\hat{g}_{i}|^{\frac{1}{2}}}\hat{g}_{i}^{kl}\partial_{l}u\nonumber \\
= & \left(g_{i}^{kl}(\delta_{i}y)-\delta^{kl}\right)\partial_{k}\partial_{l}u+\delta_{i}\partial_{k}g_{i}^{kl}(\delta_{i}y)\partial_{l}u-\delta_{i}^{2}\frac{n-2}{4(n-1)}R_{g_{i}}(\delta_{i}y)u\nonumber \\
 & +O(\delta_{i}^{N}|y|^{N-1})\partial_{l}u\label{eq:L-Delta}
\end{align}
where $N$ can be chosen large since we use conformal Fermi coordinates.
At this point we use the definition of the function $\gamma_{x_{i}}$
(see (\ref{eq:vqdef})), and, by (\ref{eq:L-Delta}), (\ref{eq:gij})
and the decays properties of $U$ and $\gamma_{x_{i}}$, we obtain
\begin{align}
-\mu_{i}Q_{i}= & \delta_{i}^{2}\left(\frac{1}{3}\bar{R}_{kslj}y_{s}y_{j}+R_{nkns}y_{n}^{2}\right)\left(\partial_{k}\partial_{l}U+\delta_{i}^{2}\partial_{k}\partial_{l}\gamma_{x_{i}}\right)\nonumber \\
 & +O(\delta_{i}^{3}|y|^{3})\left(\partial_{k}\partial_{l}U+\delta_{i}^{2}\partial_{k}\partial_{l}\gamma_{x_{i}}\right)\nonumber \\
 & +\delta_{i}^{2}\left(\frac{1}{3}\bar{R}_{kklj}y_{j}+\frac{1}{3}\bar{R}_{kslk}y_{s}\right)\left(\partial_{l}U+\delta_{i}^{2}\partial_{l}\gamma_{x_{i}}\right)\nonumber \\
 & +O(\delta_{i}^{3}|y|^{2})\left(\partial_{l}U+\delta_{i}^{2}\partial_{l}\gamma_{x_{i}}\right)\nonumber \\
 & +O(\delta_{i}^{4}|y|^{2})\left(U+\delta_{i}^{2}\gamma_{x_{i}}\right)\nonumber \\
 & +\delta_{i}^{2}\Delta\gamma_{x_{i}}+O(\delta_{i}^{N}|y|^{N-1})\left(\partial_{l}U+\delta_{i}^{2}\partial_{l}\gamma_{x_{i}}\right)\nonumber \\
= & O\left(\delta_{i}^{3}\left(1+|y|\right)^{3-n}\right)+O\left(\delta_{i}^{4}\left(1+|y|\right)^{4-n}\right)+O\left(\delta_{i}^{5}\left(1+|y|\right)^{5-n}\right)\nonumber \\
 & +O\left(\delta_{i}^{6}\left(1+|y|\right)^{6-n}\right)+O\left(\delta_{i}^{N}\left(1+|y|\right)^{N-n}\right)O\left(\delta_{i}^{N+2}\left(1+|y|\right)^{N+2-n}\right).\label{eq:Qi-parziale}
\end{align}
Since $|y|\le R/\delta_{i}$, we have $\delta_{i}\left(1+|y|\right)\le C$,
thus 
\begin{equation}
Q_{i}=O(\mu_{i}^{-1}\delta_{i}^{3}\left(1+|y|\right)^{3-n}).\label{eq:Q}
\end{equation}
In light of (\ref{eq:ipass}) we have also $Q_{i}\in L^{p}(B_{R/\delta_{i}}^{+})$
for all $p\ge2$.

By Taylor expansion, and proceeding as above, we have
\begin{align*}
-\mu_{i}\bar{Q}_{i,1}= & \left\{ \delta_{i}^{4}\frac{2}{n-2}(U+\theta\delta_{i}^{2}\gamma_{x_{i}})^{\frac{4-n}{n-2}}\gamma_{x_{i}}^{2}-\delta_{i}\frac{n-2}{2}h_{g_{i}}(\delta_{i}y)(U+\delta_{i}^{2}\gamma_{x_{i}})\right\} \\
= & O(\delta_{i}^{4}\left(1+|y|\right)^{5-n}).
\end{align*}
Notice that in the above estimates we have $U+\theta\delta_{i}^{2}\gamma_{x_{i}}>0$
since we are in $B_{R/\delta_{i}}$. 

Similarly, since $(U+\delta_{i}^{2}\gamma_{x_{i}})^{p_{i}}=(U+\delta_{i}^{2}\gamma_{x_{i}})^{\frac{n}{n-2}}+O(\tau_{i})(U+\delta_{i}^{2}\gamma_{x_{i}})^{\frac{n}{n-2}}\log(U+\delta_{i}^{2}\gamma_{x_{i}})$
and $\hat{f}^{-\tau_{i}}=1+O(\tau_{i})$, we have 
\[
-\mu_{i}\bar{Q}_{i,2}=O(\tau_{i}\left(1+|y|\right)^{1-n}).
\]
We conclude 
\begin{equation}
\bar{Q}_{i}=O(\mu_{i}^{-1}\delta_{i}^{4}\left(1+|y|\right)^{5-n})+O(\mu_{i}^{-1}\tau_{i}\left(1+|y|\right)^{1-n}),\label{eq:Qbar}
\end{equation}
and $\bar{Q_{i}}\in L^{p}(\partial'B_{R/\delta_{i}}^{+})$ for all
$p\ge2$. 

Finally we remark that $|w_{i}(y)|\le w_{i}(y_{i})=1$, so by (\ref{eq:ipass}),
(\ref{eq:b1}), (\ref{eq:b2}), (\ref{eq:Q}), (\ref{eq:Qbar}) and
by standard elliptic estimates we conclude that, up to subsequence,
$\left\{ w_{i}\right\} _{i}$ converges in $C_{\text{loc}}^{2}(\mathbb{R}_{+}^{n})$
to some $w$ solution of (\ref{eq:diff-w}), as claimed.

The next step is to prove that $|w(y)|\le C(1+|y|^{-1})$ for $y\in\mathbb{R}_{+}^{n}$.
To do so, we consider $G_{i}$ the Green function for the conformal
Laplacian $L_{\hat{g}_{i}}$ defined on $B_{r/\delta_{i}}^{+}$ with
boundary conditions $B_{\hat{g}_{i}}G_{i}=0$ on $\partial'B_{r/\delta_{i}}^{+}$
and $G_{i}=0$ on $\partial^{+}B_{r/\delta_{i}}^{+}$. It is well
known that $G_{i}=O(|\xi-y|^{2-n})$. By the Green formula and by
(\ref{eq:Q}) and (\ref{eq:Qbar}) we will be able to estimate $w_{i}$
in $B_{R/(2\delta_{i})}^{+}$. In fact 
\begin{align*}
w_{i}(y)= & -\int_{B_{\frac{R}{\delta_{i}}}^{+}}G_{i}(\xi,y)Q_{i}(\xi)d\mu_{\hat{g}_{i}}(\xi)-\int_{\partial^{+}B_{\frac{R}{\delta_{i}}}^{+}}\frac{\partial G_{i}}{\partial\nu}(\xi,y)w_{i}(\xi)d\sigma_{\hat{g}_{i}}(\xi)\\
 & +\int_{\partial'B_{\frac{R}{\delta_{i}}}^{+}}G_{i}(\xi,y)\left(b_{i}(\xi)w_{i}(\xi)-\bar{Q}_{i}(\xi)\right)d\sigma_{\hat{g}_{i}}(\xi),
\end{align*}
so 
\begin{align*}
|w_{i}(y)| & \le\frac{\delta_{i}^{3}}{\mu_{i}}\int_{B_{\frac{R}{\delta_{i}}}^{+}}|\xi-y|^{2-n}(1+|\xi|)^{3-n}d\xi+\int_{\partial^{+}B_{\frac{R}{\delta_{i}}}^{+}}|\xi-y|^{1-n}w_{i}(\xi)d\sigma(\xi)\\
 & +\int_{\partial'B_{\frac{R}{\delta_{i}}}^{+}}|\bar{\xi}-y|^{2-n}\left((1+|\bar{\xi}|)^{-2}+\frac{\delta_{i}^{4}}{\mu_{i}}(1+|\bar{\xi}|)^{5-n}+\frac{\tau_{i}}{\mu_{i}}(1+|\bar{\xi}|)^{1-n}\right)d\bar{\xi},
\end{align*}
where in the last integral we used that $|w_{i}(y)|\le1$. For the
second integral we use that $|y|\le\frac{R}{2\delta_{i}}$ to estimate
$|\xi-y|\ge|\xi|-|y|\ge\frac{R}{2\delta_{i}}$ on $\partial^{+}B_{R/\delta_{i}}^{+}$.
Moreover, since $v_{i}(\xi)\le CU(\xi)$, we get the inequality
\begin{equation}
|w_{i}(\xi)|\le\frac{C}{\mu_{i}}\left(\left(1+|\xi|\right)^{2-n}+\delta_{i}^{2}\left(1+|\xi|\right)^{4-n}\right)\le C\frac{\delta_{i}^{n-2}}{\mu_{i}}\text{ on }\partial^{+}B_{R/\delta_{i}}^{+};\label{eq:wibordo}
\end{equation}
hence 
\begin{equation}
\int_{\partial^{+}B_{\frac{R}{\delta_{i}}}^{+}}|\xi-y|^{1-n}w_{i}(\xi)d\sigma(\xi)\le C\int_{\partial^{+}B_{\frac{R}{\delta_{i}}}^{+}}\frac{\delta_{i}^{2n-3}}{\mu_{i}}d\sigma_{\hat{g}_{i}}(\xi)\le C\frac{\delta_{i}^{n-2}}{\mu_{i}}.\label{eq:stimaW1}
\end{equation}
 For the other terms we use the following formula (see \cite[Lemma 9.2]{Al}
and \cite{Au,Gi}) 
\begin{equation}
\int_{\mathbb{R}^{m}}|\xi-y|^{\beta-m}(1+|y|)^{-\alpha}\le C(1+|y|)^{\beta-\alpha},\label{eq:ALstimagreen}
\end{equation}
which holds for $y\in\mathbb{R}^{m+k}\supseteq\mathbb{R}^{m}$ and
for $\alpha,\beta\in\mathbb{N}$, $0<\beta<\alpha<m$, to obtain 
\begin{equation}
\frac{\delta_{i}^{3}}{\mu_{i}}\int_{B_{\frac{R}{\delta_{i}}}^{+}}|\xi-y|^{2-n}(1+|\xi|)^{3-n}d\xi\le C\frac{\delta_{i}^{3}}{\mu_{i}}(1+|y|)^{5-n};\label{eq:stimaW2}
\end{equation}

\begin{equation}
\int_{\partial'B_{\frac{R}{\delta_{i}}}^{+}}|\bar{\xi}-y|^{2-n}(1+|\bar{\xi}|)^{-2}d\bar{\xi}\le(1+|y|)^{-1};\label{eq:stimaW3}
\end{equation}
\begin{equation}
\frac{\delta_{i}^{4}}{\mu_{i}}\int_{\partial'B_{\frac{R}{\delta_{i}}}^{+}}|\bar{\xi}-y|^{2-n}(1+|\bar{\xi}|)^{5-n}d\bar{\xi}\le C\frac{\delta_{i}^{4}}{\mu_{i}}(1+|y|)^{6-n};\label{eq:stimaW4}
\end{equation}

\begin{equation}
\frac{\tau_{i}}{\mu_{i}}\int_{\partial'B_{\frac{R}{\delta_{i}}}^{+}}|\bar{\xi}-y|^{2-n}(1+|\bar{\xi}|)^{1-n}d\bar{\xi}\le C\frac{\tau_{i}}{\mu_{i}}(1+|y|)^{2-n}.\label{eq:stimaW5}
\end{equation}
By (\ref{eq:stimaW1}), (\ref{eq:stimaW2}), (\ref{eq:stimaW3}),
(\ref{eq:stimaW4}), (\ref{eq:stimaW5}), we have 
\begin{equation}
|w_{i}(y)|\le C\left((1+|y|)^{-1}+\frac{\delta_{i}^{3}}{\mu_{i}}(1+|y|)^{5-n}+\frac{\tau_{i}}{\mu_{i}}(1+|y|)^{2-n}\right)\text{ for }|y|\le\frac{R}{2\delta_{i}},\label{eq:stimaWiass}
\end{equation}
so by assumption (\ref{eq:ipass}) we prove 
\begin{equation}
|w(y)|\le C(1+|y|)^{-1}\text{ for }y\in\mathbb{R}_{+}^{n}\label{eq:stimaWass}
\end{equation}
as claimed.

Finally we notice that, since $v_{i}\rightarrow U$ near $0$, and
by (\ref{eq:dervq}) we have $w_{i}(0)\rightarrow0$ as well as $\frac{\partial w_{i}}{\partial y_{j}}(0)\rightarrow0$
for $j=1,\dots,n-1$. This implies, since $w_{i}\rightarrow w$ in
$C_{\text{loc}}^{2}$, that 
\begin{equation}
w(0)=\frac{\partial w}{\partial y_{1}}(0)=\dots=\frac{\partial w}{\partial y_{n-1}}(0)=0.\label{eq:W(0)}
\end{equation}
We are ready now to prove the contradiction. In fact, it is known
(see \cite[Lemma 2]{Al}) that any solution of (\ref{eq:diff-w})
that decays as (\ref{eq:stimaWass}) is a linear combination of $\frac{\partial U}{\partial y_{1}},\dots,\frac{\partial U}{\partial y_{n-1}},\frac{n-2}{2}U+y^{b}\frac{\partial U}{\partial y_{b}}$.
This fact, combined with (\ref{eq:W(0)}), implies that $w\equiv0$. 

Now, on one hand $|y_{i}|\le\frac{R}{2\delta_{i}}$, so estimate (\ref{eq:stimaWiass})
holds; on the other hand, since $w_{i}(y_{i})=1$ and $w\equiv0$,
we get $|y_{i}|\rightarrow\infty$, obtaining
\[
1=w_{i}(y_{i})\le C(1+|y_{i}|)^{-1}\rightarrow0
\]
 which gives us the desired contradiction, and proves the Lemma. 
\end{proof}
\begin{lem}
\label{lem:taui}Assume $n\ge8$. There exists $C>0$ such that 
\[
\tau_{i}\le C\delta_{i}^{3}.
\]
\end{lem}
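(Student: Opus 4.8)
The plan is to derive $\tau_i\le C\delta_i^3$ from a Pohozaev‑type identity for the rescaled solution $v_i$ on the half‑ball $B^+_{R/\delta_i}(0)$; equivalently, one integrates the relation $\Delta v_i=(\Delta-L_{\hat g_i})v_i$ (valid since $L_{\hat g_i}v_i=0$) against the dilation Jacobi field $j_n$ of (\ref{eq:jn}) and uses Green's formula. This writes the interior integral $\int_{B^+_{R/\delta_i}}j_n\,\Delta v_i$ simultaneously as a geometric bulk term $\int_{B^+_{R/\delta_i}}j_n\,(\Delta-L_{\hat g_i})v_i$ and as the sum of a boundary integral over the flat face $\partial'B^+_{R/\delta_i}$ and one over the spherical cap $\partial^+B^+_{R/\delta_i}$. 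Matching the two expressions, and showing that the flat‑face integral contains a definite‑sign multiple of $\tau_i$ while everything else is $O(\delta_i^3)+o(\tau_i)$, will give the claim.

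For the flat‑face integral I would use the boundary condition $B_{\hat g_i}v_i+(n-2)\hat f_i^{-\tau_i}v_i^{p_i}=0$ together with the model relations $\partial_{y_n}U=-(n-2)U^{\frac n{n-2}}$ and $\partial_{y_n}j_n=-n\,U^{\frac2{n-2}}j_n$, and then insert the expansion $v_i=U+\delta_i^2\gamma_{x_i}+\psi_i$ with $|\psi_i|\le C(\delta_i^3+\tau_i)$ from Lemma \ref{lem:coreLemma}. All contributions of order $1$ integrate to zero against $j_n$ over $\mathbb R^{n-1}$ (one uses $\int_{\mathbb R^{n-1}}j_n\,U^{\frac n{n-2}}\,dz=0$); the contributions of order $\delta_i^2$ cancel between the Taylor expansions of $n\,U^{\frac2{n-2}}v_i$ and $(n-2)\hat f_i^{-\tau_i}v_i^{p_i}$ (here the orthogonality and normalisation of $\gamma_{x_i}$ from Lemma \ref{lem:vq}, in particular (\ref{eq:Uvq}), enter), and the surviving leading term is
\[
(n-2)\,\tau_i\!\int_{\mathbb R^{n-1}}j_n\,U^{\frac n{n-2}}\log U\,dz=\frac{(n-2)^3}{4(n-1)}\,\tau_i\!\int_{\mathbb R^{n-1}}U^{\frac{2(n-1)}{n-2}}\,dz+o(\tau_i),
\]
a strictly positive multiple of $\tau_i$. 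The quadratic and higher remainders in $\psi_i$ only decay like $(1+|y|)^{-1}$, hence are not integrable against $j_n$ over all of $\partial'B^+_{R/\delta_i}$; splitting the integration region at a radius $r_0\sim(\delta_i^3+\tau_i)^{-1/(n-3)}$ and using the pointwise bound $v_i\le CU$ and the decay (\ref{eq:gradvq}) of $\gamma_{x_i}$ for $|y|\gtrsim r_0$ shows these remainders are $O(\delta_i^3)+o(\tau_i)$; the term carrying the mean curvature is $O(\delta_i^5)$ because $h_{\hat g_i}(y)=O(|y|^4)$ by (\ref{eq:hij}) and the resulting integral converges precisely when $n\ge8$.

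The spherical cap is harmless: on $\partial^+B^+_{R/\delta_i}$ one has $v_i\le CU=O(\delta_i^{n-2})$, $|\nabla v_i|=O(\delta_i^{n-2})$ by interior elliptic estimates, and $\delta_i^2|\nabla^\tau\gamma_{x_i}|=O(\delta_i^{n-2})$ for $\tau=0,1$ by (\ref{eq:gradvq}), so this contribution is $O(\delta_i^{n-3})=o(\delta_i^3)$. For the geometric bulk term $\int_{B^+_{R/\delta_i}}j_n\,(\Delta-L_{\hat g_i})v_i$ I would again substitute $v_i=U+\delta_i^2\gamma_{x_i}+\psi_i$, and exploit that $\gamma_{x_i}$ was chosen so that (\ref{eq:vqdef}) makes $\Delta(U+\delta_i^2\gamma_{x_i})$ equal, to leading order, the geometric source $-\delta_i^2(\tfrac13\bar R_{ijkl}y_ky_l+R_{ninj}y_n^2)\partial^2_{ij}U$ produced by $\Delta-L_{\hat g_i}$: the order‑$\delta_i^2$ part of the bulk integral then reduces to $\delta_i^2\int_{B^+_{R/\delta_i}}j_n\,\Delta\gamma_{x_i}$, which by Green's formula is a boundary integral that vanishes identically on $\partial'$ (because $\partial_{y_n}\gamma_{x_i}=-n\,U^{\frac2{n-2}}\gamma_{x_i}$ is the linearised bubble condition also satisfied by $j_n$) and is $O(\delta_i^{n-2})$ on $\partial^+$ by (\ref{eq:gradvq}). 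What is left are the terms carrying $\delta_i^2\gamma_{x_i}$, which are $O(\delta_i^4)$; the terms carrying $\psi_i$, which are $O(\delta_i^3)+o(\tau_i)$ (the $\delta_i^2$ prefactor from $\Delta-L_{\hat g_i}$ times the size of $\psi_i$); and the genuinely cubic terms of the metric and curvature expansions (\ref{eq:|g|})--(\ref{eq:Rnnnn}), which are $O(\delta_i^3)$, this last estimate being tight at $n=8$. Collecting everything, the identity reads
\[
\frac{(n-2)^3}{4(n-1)}\Big(\int_{\mathbb R^{n-1}}U^{\frac{2(n-1)}{n-2}}\,dz\Big)\,\tau_i\,(1+o(1))=O(\delta_i^3)+o(\tau_i),
\]
so $\tau_i\le C\delta_i^3$ for $i$ large, and enlarging $C$ to absorb the finitely many remaining indices finishes the proof.

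The step I expect to be the main obstacle is the estimate of the geometric bulk term: one must verify that after inserting the correction $\delta_i^2\gamma_{x_i}$ every contribution of order $\delta_i^2$ genuinely cancels — this is the whole point of $\gamma_{x_i}$ and of property (\ref{eq:Uvq}) — and that the leftover is really of order $\delta_i^3$, a bookkeeping that becomes sharp in dimension $n=8$, which is exactly where the hypothesis $n\ge8$ is needed (both here and to make the mean‑curvature boundary integral converge). Everything else is a scaling computation resting on Lemma \ref{lem:coreLemma}, Lemma \ref{lem:vq} and Remark \ref{rem:confnorm}.
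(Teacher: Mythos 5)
Your plan is in substance the same device the paper uses for this lemma: pair the equation for the rescaled solution with the dilation Jacobi field $j_n$, exploit the $\gamma_{x_i}$ correction so that the order-$\delta_i^2$ contributions drop out, and read off the $\tau_i\log U$ term, whose sign is definite by the paper's integral $(n-2)\int j_n\log U\,U^{n/(n-2)}>0$. Where you diverge is in bookkeeping: the paper argues by contradiction, assumes $\delta_i^3/\tau_i\to0$, divides the error $\psi_i=v_i-U-\delta_i^2\gamma_{x_i}$ by $\tau_i$, and shows that the renormalised error $w_i$ converges to a Jacobi solution $w$, so that after integrating $j_n\,\bar Q_i$ by parts all limits vanish while the $\log U$ limit does not. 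Your version is a direct quantitative estimate, no contradiction. It buys a cleaner statement but makes you pay exactly where the paper's renormalisation saved work, and your sketch papers over two such places. First, in the bulk term $\int_{B^+}j_n(\Delta-L_{\hat g_i})\psi_i$ you claim ``$\delta_i^2$ prefactor times the size of $\psi_i$'', but $(\Delta-L_{\hat g_i})\psi_i$ contains second derivatives of $\psi_i$ which Lemma \ref{lem:coreLemma} does not control; you must either route through Green's identity in the $L_{\hat g_i}$-form (so that only $L_{\hat g_i}\psi_i=Q_i=O(\delta_i^3(1+|y|)^{3-n})$, $L_{\hat g_i}j_n$ and first derivatives appear, which is exactly the decomposition (\ref{eq:nullo1})--(\ref{eq:nullofinale}) of the paper) or invoke interior elliptic estimates for $\nabla^2\psi_i$. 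Second, your treatment of the quadratic boundary remainder is the right idea, but as stated it is fragile: with only the uniform bound $|\psi_i|\le C(\delta_i^3+\tau_i)$ the contribution grows like $\delta_i^{3-n}$ near the outer radius; you also need the complementary pointwise bound $|\psi_i|\le C(1+|y|)^{2-n}$ (which follows from $v_i\le CU$ and (\ref{eq:gradvq})), and then a split at $r_0\sim(\delta_i^3+\tau_i)^{-1/(n-2)}$ gives $(\delta_i^3+\tau_i)^{(n-1)/(n-2)}=o(\delta_i^3)+o(\tau_i)$, which is what you want but with a different exponent than the $-1/(n-3)$ you wrote. With those two points repaired, your direct argument is correct and amounts to an unrenormalised rewrite of the paper's contradiction proof.
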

\begin{proof}
We proceed by contradiction, supposing that 
\begin{equation}
\tau_{i}^{-1}\delta_{i}^{3}\rightarrow0\text{ when }i\rightarrow\infty.\label{eq:ipasstau}
\end{equation}
Thus, by Lemma \ref{lem:coreLemma}, we have 
\[
|v_{i}(y)-U(y)-\delta_{i}^{2}\gamma_{x_{i}}(y)|\le C\tau_{i}\text{ for }|y|\le R/\delta_{i}.
\]
We define, similarly to Lemma \ref{lem:coreLemma},
\[
w_{i}(y):=\frac{1}{\tau_{i}}\left(v_{i}(y)-U(y)-\delta_{i}^{2}\gamma_{x_{i}}(y)\right)\text{ for }|y|\le R/\delta_{i},
\]
and we have that $w_{i}$ satisfies (\ref{eq:wi}), where 
\begin{align*}
b_{i}= & (n-2)\hat{f_{i}}^{-\tau_{i}}\frac{v_{i}^{p_{i}}-(U+\delta_{i}^{2}\gamma_{x_{i}})^{p_{i}}}{v_{i}-U-\delta_{i}^{2}\gamma_{x_{i}}}\\
\bar{Q}_{i}= & -\frac{1}{\tau_{i}}\left\{ (n-2)(U+\delta_{i}^{2}\gamma_{x_{i}})^{\frac{n}{n-2}}-(n-2)U^{\frac{n}{n-2}}-n\delta_{i}^{2}U^{\frac{2}{n-2}}\gamma_{x_{i}}-\frac{n-2}{2}h_{\hat{g}_{i}}(U+\delta_{i}^{2}\gamma_{x_{i}})\right\} \\
 & +\frac{n-2}{\tau_{i}}\left\{ (U+\delta_{i}^{2}\gamma_{x_{i}})^{\frac{n}{n-2}}-\hat{f}_{i}^{-\tau_{i}}(U+\delta_{i}^{2}\gamma_{x_{i}})^{p_{i}}\right\} \\
Q_{i}= & -\frac{1}{\tau_{i}}\left\{ \left(L_{\hat{g}_{i}}-\Delta\right)(U+\delta_{i}^{2}\gamma_{x_{i}})+\delta_{i}^{2}\Delta\gamma_{x_{i}}\right\} .
\end{align*}
As before, $b_{i}$ satisfies inequality (\ref{eq:b2}), while 
\begin{align}
Q_{i} & =O(\tau_{i}^{-1}\delta_{i}^{3}\left(1+|y|\right)^{3-n}),\label{eq:Q-1-1}\\
\bar{Q}_{i} & =O(\tau_{i}^{-1}\delta_{i}^{4}\left(1+|y|\right)^{5-n})+O(\left(1+|y|\right)^{1-n}),\label{eq:Q-1-2}
\end{align}
and we can proceed as in Lemma \ref{lem:coreLemma}, to deduce that
\begin{equation}
|w_{i}(y)|\le C\left((1+|y|)^{-1}+\frac{\delta_{i}^{3}}{\tau_{i}}(1+|y|)^{5-n}\right)\text{ for }|y|\le\frac{R}{2\delta_{i}}.\label{eq:decWtau}
\end{equation}
By classic elliptic estimates, we can prove that the sequence $w_{i}$
converges in $C_{\text{loc}}^{2}(\mathbb{R}_{+}^{n})$ to some $w$.

Finally, by assumption on $\left\{ f_{i}\right\} _{i}$, $f_{i}\rightarrow\Lambda_{x_{0}}f$
in the $C^{1}$ topology, and since $\hat{f}_{i}(y)=f(\delta_{i}y),$
and recalling that $x_{i}=\psi_{i}(0)$, $x_{i}\rightarrow x_{0}$
and $\Lambda_{x_{0}}(x_{0})=1$, we have 
\begin{equation}
\lim_{i\rightarrow+\infty}\frac{1}{\tau_{i}}\left\{ (U+\delta_{i}^{2}\gamma_{x_{i}})^{\frac{n}{n-2}}-\hat{f}_{i}^{-\tau_{i}}(U+\delta_{i}^{2}\gamma_{x_{i}})^{p_{i}}\right\} =\left[\log(f(x_{0}))+\log U\right]U^{\frac{n}{n-2}}.\label{eq:limQ2}
\end{equation}
Now, let $j_{n}$ defined as in (\ref{eq:jn}). Since $\int_{\mathbb{R}_{+}^{n}}j_{n}(y)U^{\frac{n}{n-2}}(y)dy=0$,
and in light of (\ref{eq:limQ2}) and (\ref{eq:Q-1-2}), we get 
\[
\lim_{i\rightarrow+\infty}\int_{\partial'B_{\frac{R}{\delta_{i}}}^{+}}j_{n}\bar{Q}_{i}d\sigma_{\hat{g}_{i}}=(n-2)\int_{\partial\mathbb{R}_{+}^{n}}j_{n}(y)\log U(y)U^{\frac{n}{n-2}}(y)dy.
\]
By direct computation we have 
\begin{equation}
(n-2)\int_{\partial\mathbb{R}_{+}^{n}}j_{n}(y)\log U(y)U^{\frac{n}{n-2}}(y)dy>0.\label{eq:neg}
\end{equation}
In fact, integrating in polar coordinates $r:=|\bar{y}|$ on $\partial\mathbb{R}_{+}^{n}$,
we obtain
\begin{align*}
\int_{\partial\mathbb{R}_{+}^{n}}j_{n}(y)\log U(y)U^{\frac{n}{n-2}}(y)dy & =-\sigma_{n-2}\frac{(n-2)^{2}}{4}\int_{0}^{\infty}\frac{1-r^{2}}{(1+r^{2})^{n}}r^{n-2}\log(1+r^{2})dr>0.
\end{align*}
At this point we can see that (\ref{eq:neg}) leads us to a contradiction.
Indeed, since $w_{i}$ satisfies (\ref{eq:wi}), integrating by parts
we obtain
\begin{multline*}
\int_{\partial'B_{\frac{R}{\delta_{i}}}^{+}}j_{n}\bar{Q}_{i}d\sigma_{\hat{g}_{i}}=\int_{\partial'B_{\frac{R}{\delta_{i}}}^{+}}j_{n}\left[B_{\hat{g}_{i}}w_{i}+b_{i}w_{i}\right]d\sigma_{\hat{g}_{i}}\\
=\int_{\partial'B_{\frac{R}{\delta_{i}}}^{+}}w_{i}\left[B_{\hat{g}_{i}}j_{n}+b_{i}j_{n}\right]d\sigma_{\hat{g}_{i}}+\int_{\partial^{+}B_{\frac{R}{\delta_{i}}}^{+}}\left[\frac{\partial j_{n}}{\partial\eta_{i}}w_{i}-\frac{\partial w_{i}}{\partial\eta_{i}}j_{n}\right]d\sigma_{\hat{g}_{i}}\\
+\int_{B_{\frac{R}{\delta_{i}}}^{+}}\left[w_{i}L_{\hat{g}_{i}}j_{n}-j_{n}L_{\hat{g}_{i}}w_{i}\right]d\mu_{\hat{g}_{i}},
\end{multline*}
where $\eta_{i}$ is the inward unit normal vector to $\partial^{+}B_{\frac{R}{\delta_{i}}}^{+}$. 

By the decay of $j_{n}$ and by the decay of $w_{i}$, given by (\ref{eq:decWtau})
and by (\ref{eq:ipasstau}), we have
\begin{equation}
\lim_{i\rightarrow+\infty}\int_{\partial^{+}B_{\frac{R}{\delta_{i}}}^{+}}\left[\frac{\partial j_{n}}{\partial\eta_{i}}w_{i}-\frac{\partial w_{i}}{\partial\eta_{i}}j_{n}\right]d\sigma_{\hat{g}_{i}}=0\label{eq:nullo1}
\end{equation}
and by (\ref{eq:wi}) and by the decay of $Q_{i}$ given in (\ref{eq:Q-1-1})
we have 
\begin{equation}
\lim_{i\rightarrow+\infty}\int_{B_{\frac{R}{\delta_{i}}}^{+}}j_{n}L_{\hat{g}_{i}}w_{i}d\mu_{\hat{g}_{i}}=\int_{B_{\frac{R}{\delta_{i}}}^{+}}j_{n}Q_{i}d\mu_{\hat{g}_{i}}=0.\label{eq:nullo2}
\end{equation}
Finally, since $\Delta j_{n}=0$, by (\ref{eq:L-Delta}) we get 
\begin{equation}
\lim_{i\rightarrow+\infty}\int_{B_{\frac{R}{\delta_{i}}}^{+}}w_{i}L_{\hat{g}_{i}}j_{n}d\mu_{\hat{g}_{i}}=0,\label{eq:nullo3}
\end{equation}
thus, by (\ref{eq:nullo1}), (\ref{eq:nullo2}) and (\ref{eq:nullo3}),
we have
\begin{align}
\lim_{i\rightarrow+\infty}\int_{\partial'B_{\frac{R}{\delta_{i}}}^{+}}j_{n}\bar{Q}_{i}d\sigma_{\hat{g}_{i}} & =\lim_{i\rightarrow+\infty}\int_{\partial'B_{\frac{R}{\delta_{i}}}^{+}}w_{i}\left[B_{\hat{g}_{i}}j_{n}+b_{i}j_{n}\right]d\sigma_{\hat{g}_{i}}\nonumber \\
 & =\int_{\partial\mathbb{R}_{+}^{n}}w\left[\frac{\partial j_{n}}{\partial y_{n}}+nU^{\frac{2}{n-2}}j_{n}\right]d\sigma_{\hat{g}_{i}}=0\label{eq:nullofinale}
\end{align}
since $\frac{\partial j_{n}}{\partial y_{n}}+nU^{\frac{2}{n-2}}j_{n}=0$
when $y_{n}=0$. Comparing (\ref{eq:neg}) and (\ref{eq:nullofinale})
we get the contradiction.
\end{proof}
The above lemmas are the core of the following proposition, in which
we iterate the procedure of Lemma \ref{lem:coreLemma}, to obtain
better estimates of the rescaled solution $v_{i}$ of (\ref{eq:Prob-hat})
around the isolated simple blow up point $x_{i}\rightarrow x_{0}$.
\begin{prop}
\label{prop:stimawi}Assume $n\ge8$. Let $\gamma_{x_{i}}$ be defined
in (\ref{eq:vqdef}). There exist $R,C>0$ such that 
\begin{align*}
|v_{i}(y)-U(y)-\delta_{i}^{2}\gamma_{x_{i}}(y)| & \le C\delta_{i}^{3}(1+|y|)^{5-n}\\
\left|\frac{\partial}{\partial_{j}}\left(v_{i}(y)-U(y)-\delta_{i}^{2}\gamma_{x_{i}}(y)\right)\right| & \le C\delta_{i}^{3}(1+|y|)^{4-n}\\
\left|y_{n}\frac{\partial}{\partial_{n}}\left(v_{i}(y)-U(y)-\delta_{i}^{2}\gamma_{x_{i}}(y)\right)\right| & \le C\delta_{i}^{3}(1+|y|)^{5-n}\\
\left|\frac{\partial^{2}}{\partial_{j}\partial_{k}}\left(v_{i}(y)-U(y)-\delta_{i}^{2}\gamma_{x_{i}}(y)\right)\right| & \le C\delta_{i}^{3}(1+|y|)^{3-n}
\end{align*}
for $|y|\le\frac{R}{2\delta_{i}}$. Here $j,k=1,\dots,n-1$.
\end{prop}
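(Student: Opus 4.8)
Write $\phi_i:=v_i-U-\delta_i^{2}\gamma_{x_i}$. The plan is to run a finite bootstrap on the weighted decay of $\phi_i$, starting from the uniform bound given by Lemmas \ref{lem:coreLemma} and \ref{lem:taui}, and then to deduce the three derivative bounds from the resulting pointwise estimate by rescaled elliptic estimates. To begin with, Lemma \ref{lem:coreLemma} together with Lemma \ref{lem:taui} yields $|\phi_i(y)|\le C\delta_i^{3}$ for $|y|\le R/\delta_i$, and, repeating verbatim the computation in the proof of Lemma \ref{lem:coreLemma} but without dividing by $\mu_i$, the function $\phi_i$ solves $L_{\hat g_i}\phi_i=Q_i$ in $B_{R/\delta_i}^{+}(0)$ and $B_{\hat g_i}\phi_i+b_i\phi_i=\bar Q_i$ on $\partial'B_{R/\delta_i}^{+}(0)$, where, by (\ref{eq:b2}), (\ref{eq:Q}), (\ref{eq:Qbar}) and the bound $\tau_i\le C\delta_i^{3}$ of Lemma \ref{lem:taui},
\[
|b_i(y)|\le C(1+|y|)^{-2},\qquad |Q_i(y)|\le C\delta_i^{3}(1+|y|)^{3-n},\qquad |\bar Q_i(y)|\le C\delta_i^{3}(1+|y|)^{1-n}+C\delta_i^{4}(1+|y|)^{5-n}.
\]

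The core of the argument is the Green representation, exactly as in the proof of Lemma \ref{lem:coreLemma}. Let $G_i$ be the Green's function of $L_{\hat g_i}$ on $B_{R/\delta_i}^{+}(0)$ with $B_{\hat g_i}G_i=0$ on $\partial'$ and $G_i=0$ on $\partial^{+}$, so that $G_i=O(|\xi-y|^{2-n})$ and $\partial_\nu G_i=O(|\xi-y|^{1-n})$. Writing $\phi_i(y)$ as an interior integral of $G_iQ_i$, a boundary integral on $\partial^{+}$ of $\partial_\nu G_i\,\phi_i$, and a boundary integral on $\partial'$ of $G_i(b_i\phi_i-\bar Q_i)$, and estimating as in (\ref{eq:stimaW1})--(\ref{eq:stimaW5}) with the help of (\ref{eq:ALstimagreen}), I expect the interior term to be $O(\delta_i^{3}(1+|y|)^{5-n})$, the $\partial^{+}$ term to be $O(\delta_i^{n-2})$ (hence $O(\delta_i^{3}(1+|y|)^{5-n})$ on $|y|\le R/(2\delta_i)$), and the $\bar Q_i$ part of the $\partial'$ term to be $O(\delta_i^{3}(1+|y|)^{2-n}+\delta_i^{4}(1+|y|)^{6-n})$, again $O(\delta_i^{3}(1+|y|)^{5-n})$. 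So the only term still transporting the current decay of $\phi_i$ is the one involving $b_i\phi_i$: if $|\phi_i(y)|\le C\delta_i^{3}(1+|y|)^{-s}$ for some integer $s$ with $0\le s\le n-5$, then $|b_i\phi_i|\le C\delta_i^{3}(1+|\bar\xi|)^{-s-2}$ and (\ref{eq:ALstimagreen}) applied over the $(n-1)$-dimensional face $\partial'B_{R/\delta_i}^{+}$ with $\beta=1$, $\alpha=s+2$ (admissible since $1<s+2<n-1$) gives the extra power $(1+|y|)^{-1}$, i.e. that contribution is $O(\delta_i^{3}(1+|y|)^{-s-1})$. Hence
\[
|\phi_i(y)|\le C\delta_i^{3}(1+|y|)^{5-n}+C\delta_i^{3}(1+|y|)^{-s-1}\le C\delta_i^{3}(1+|y|)^{-\min(n-5,\,s+1)},\qquad |y|\le\frac{R}{2\delta_i}.
\]
Starting from $s=0$ (the uniform bound) and iterating $n-5$ times — legitimate since $s$ never exceeds $n-5<n-3$ — one reaches the first estimate $|\phi_i(y)|\le C\delta_i^{3}(1+|y|)^{5-n}$.

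For the derivative bounds I would use rescaled elliptic (Schauder) estimates, after shrinking $R$ if necessary. On a fixed ball $\{|y|\le R_0\}$, interior and boundary elliptic estimates applied to the above system (using $|\phi_i|,|Q_i|,|\bar Q_i|\le C\delta_i^{3}$ and $|b_i|\le C$ there) give $\|\phi_i\|_{C^{2,\alpha}}\le C\delta_i^{3}$, which already dominates all three bounds on that region. For a point $y_0$ with $R_0\le|y_0|\le R/(2\delta_i)$, set $\rho=|y_0|$ and $d=(y_0)_n$. If $d\ge\rho/4$, apply interior Schauder estimates on $B_{\rho/8}(y_0)$ — where $|\phi_i|\le C\delta_i^{3}\rho^{5-n}$ and $|Q_i|\le C\delta_i^{3}\rho^{3-n}$ — and rescale by $\rho$, first getting $|\nabla\phi_i(y_0)|\le C\delta_i^{3}\rho^{4-n}$ and then $|\nabla^2\phi_i(y_0)|\le C\delta_i^{3}\rho^{3-n}$; in particular $|y_n\partial_n\phi_i(y_0)|\le\rho|\nabla\phi_i(y_0)|\le C\delta_i^{3}\rho^{5-n}$. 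If $d<\rho/4$, apply boundary Schauder estimates on a half-ball of radius $\rho/8$ centred at the boundary projection of $y_0$; there the rescaled Neumann datum generated by $\bar Q_i-b_i\phi_i$ (and by the lower order terms $h_{\hat g_i}$ and $\hat g_i-\delta$) is of size $O(\delta_i^{3}\rho^{1-n})+O(\delta_i^{3}\rho^{3-n})$, dominated by the rescaled interior contribution $O(\delta_i^{3}\rho^{5-n})$, so one obtains $|\partial_j\phi_i(y_0)|\le C\delta_i^{3}\rho^{4-n}$ and $|\partial_j\partial_k\phi_i(y_0)|\le C\delta_i^{3}\rho^{3-n}$ for $j,k=1,\dots,n-1$, together with $|y_n\partial_n\phi_i(y_0)|\le d\,C\delta_i^{3}\rho^{4-n}\le C\delta_i^{3}\rho^{5-n}$. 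Combining the two ranges with the $\{|y|\le R_0\}$ region gives the three remaining estimates.

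The one delicate point is the bootstrap: one must check that every source term in the Green representation (the interior right-hand side $Q_i$, the spherical boundary term, and the Neumann datum $\bar Q_i$) has already been pushed down to the target decay $\delta_i^{3}(1+|y|)^{5-n}$ — this is precisely where Lemma \ref{lem:taui} is indispensable, since it absorbs the factor $\tau_i$ into $\delta_i^{3}$ — so that the single term $b_i\phi_i$ carries the current decay of $\phi_i$, and that this term improves by exactly one power $(1+|y|)^{-1}$ when integrated against $|\xi-y|^{2-n}$ over the $(n-1)$-dimensional boundary (formula (\ref{eq:ALstimagreen}) with $\beta=1$), making the iteration self-improving and terminating after finitely many ($n-5$) steps. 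The $C^{2}$ bound near the origin and the rescaled Schauder estimates giving the derivative bounds are then routine.
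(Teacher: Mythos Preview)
Your bootstrap for the pointwise bound is exactly the paper's argument: the same Green representation, the same handling of the $Q_i$, $\bar Q_i$ and $\partial^{+}$ terms via (\ref{eq:ALstimagreen}), and the same self-improving step on the single surviving term $b_i\phi_i$, iterated $n-5$ times. No difference there.

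Where you diverge is in the derivative estimates. The paper does not use rescaled Schauder; it stays with the Green machinery. For $y_n\partial_n\phi_i$ it differentiates the Green representation in $y_n$ and exploits the pointwise identity $y_n\partial_{y_n}G_i(\xi,y)=O(|\xi-y|^{2-n})$ on $\partial'B_{R/\delta_i}^{+}$ (because $\xi_n=0$ forces $y_n^{2}\le|\xi-y|^{2}$), so that the boundary integral is again of the form covered by (\ref{eq:ALstimagreen}); for the tangential first and second derivatives it differentiates the \emph{equation} (\ref{eq:wi}) in $y_k$, obtains a problem of the same type for $\partial_k\phi_i$, and reruns the Green/bootstrap argument. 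Your route via scaled interior/boundary Schauder estimates on balls of radius comparable to $|y_0|$ is a legitimate and arguably cleaner alternative, and it even gives the full gradient bound $|\nabla\phi_i|\le C\delta_i^{3}(1+|y|)^{4-n}$ (from which the $y_n\partial_n$ bound is immediate), whereas the paper only states the tangential derivatives. One point to tighten: when you treat $\bar Q_i-b_i\phi_i$ as Neumann data, the $C^{2,\alpha}$ Schauder step needs $C^{1,\alpha}$ control of that datum, which involves $\nabla\phi_i$ on $\partial'$ and is circular as written. This is harmless if you instead absorb $b_i$ (and the $h_{\hat g_i}$ term) into a Robin boundary operator --- after rescaling, the Robin coefficient $(\rho/8)b_i=O(\rho^{-1})$ is uniformly bounded in $C^{1,\alpha}$ --- so the only genuine datum is $(\rho/8)\bar Q_i$, whose $C^{1,\alpha}$ norm is $O(\delta_i^{3}\rho^{5-n})$ and the scaled estimate then yields $|\nabla\phi_i|\le C\delta_i^{3}\rho^{4-n}$ and $|\nabla^{2}\phi_i|\le C\delta_i^{3}\rho^{3-n}$ exactly as claimed.
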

\begin{proof}
In analogy with Lemma \ref{lem:coreLemma}, we set 
\[
w_{i}(y):=v_{i}(y)-U(y)-\delta_{i}^{2}\gamma_{x_{i}}(y)\text{ for }|y|\le R/\delta_{i},
\]
and we have that $w_{i}$ satisfies (\ref{eq:wi}), where 
\begin{align*}
b_{i}= & (n-2)\hat{f}^{-\tau_{i}}\frac{v_{i}^{p_{i}}-(U+\delta_{i}^{2}\gamma_{x_{i}})^{p_{i}}}{v_{i}-U-\delta_{i}^{2}\gamma_{x_{i}}}\\
\bar{Q}_{i}= & -\left\{ (n-2)(U+\delta_{i}^{2}\gamma_{x_{i}})^{\frac{n}{n-2}}-(n-2)U^{\frac{n}{n-2}}-n\delta_{i}^{2}U^{\frac{2}{n-2}}\gamma_{x_{i}}-\frac{n-2}{2}h_{\hat{g}_{i}}(U+\delta_{i}^{2}\gamma_{x_{i}})\right\} \\
 & +\frac{n-2}{\tau_{i}}\left\{ (U+\delta_{i}^{2}\gamma_{x_{i}})^{\frac{n}{n-2}}-\hat{f}^{-\tau_{i}}(U+\delta_{i}^{2}\gamma_{x_{i}})^{p_{i}}\right\} \\
Q_{i}= & -\left\{ \left(L_{\hat{g}_{i}}-\Delta\right)(U+\delta_{i}^{2}\gamma_{x_{i}})+\delta_{i}^{2}\Delta\gamma_{x_{i}}\right\} .
\end{align*}
As before, $b_{i}$ satisfies inequality (\ref{eq:b2}) and 
\begin{align}
Q_{i} & =O(\delta_{i}^{3}\left(1+|y|\right)^{3-n})\label{eq:Q-1}\\
\bar{Q}_{i} & =O(\delta_{i}^{4}\left(1+|y|\right)^{5-n})+O(\delta_{i}^{3}\left(1+|y|\right)^{1-n})=O(\delta_{i}^{3}\left(1+|y|\right)^{5-n}).\label{eq:Q-2}
\end{align}
We define again the Green function $G_{i}$ as in the previous lemma
and we have, by Green formula,
\begin{align}
|w_{i}(y)|\le & \int_{B_{\frac{R}{\delta_{i}}}^{+}}|\xi-y|^{2-n}Q_{i}(\xi)d\xi+\int_{\partial^{+}B_{\frac{R}{\delta_{i}}}^{+}}|\xi-y|^{1-n}w_{i}(\xi)d\sigma(\xi)\nonumber \\
 & +\int_{\partial'B_{\frac{R}{\delta_{i}}}^{+}}|\bar{\xi}-y|^{2-n}b_{i}(\xi)w_{i}(\xi)d\bar{\xi})+\int_{\partial'B_{\frac{R}{\delta_{i}}}^{+}}|\bar{\xi}-y|^{2-n}\bar{Q}_{i}(\xi)d\bar{\xi}.\label{eq:Green}
\end{align}
By the results of Lemma \ref{lem:coreLemma} and Lemma \ref{lem:taui},
and in analogy with equation (\ref{eq:wibordo}), we have that 
\begin{eqnarray}
|w_{i}(y)|\le C\delta_{i}^{3}\text{ on }B_{R/\delta_{i}}^{+} & \text{ and } & |w_{i}(\xi)|\le C\delta_{i}^{n-2}\text{ on }\partial^{+}B_{R/\delta_{i}}^{+}.\label{eq:w_i-improved}
\end{eqnarray}
Plugging (\ref{eq:b2}), (\ref{eq:Q-1}), (\ref{eq:Q-2}) and (\ref{eq:w_i-improved})
in (\ref{eq:Green}) and proceeding as in Lemma \ref{lem:coreLemma}
we obtain
\begin{align}
\int_{B_{\frac{R}{\delta_{i}}}^{+}}|\xi-y|^{2-n}Q_{i}(\xi)d\xi & \le C\delta_{i}^{3}(1+|y|)^{5-n}\label{eq:W1-1}\\
\int_{\partial^{+}B_{\frac{R}{\delta_{i}}}^{+}}|\xi-y|^{1-n}w_{i}(\xi)d\sigma(\xi) & \le C\delta_{i}^{n-2}\label{eq:W2-1}\\
\int_{\partial'B_{\frac{R}{\delta_{i}}}^{+}}|\bar{\xi}-y|^{2-n}b_{i}(\xi)w_{i}(\xi)d\bar{\xi}) & \le\delta_{i}^{3}(1+|y|)^{-1}\label{eq:W3-1}\\
\int_{\partial'B_{\frac{R}{\delta_{i}}}^{+}}|\bar{\xi}-y|^{2-n}\bar{Q}_{i}(\xi)d\bar{\xi} & \le C\delta_{i}^{3}(1+|y|)^{5-n}\label{eq:W4-1}
\end{align}
for $|y|\le\frac{R}{2\delta_{i}},$ which implies 
\begin{equation}
|w_{i}(y)|\le C\delta_{i}^{3}(1+|y|)^{-1}\text{ for }|y|\le\frac{R}{2\delta_{i}}.\label{eq:wi-improv2}
\end{equation}
We now iterate this procedure, inserting inequality (\ref{eq:wi-improv2})
in equation (\ref{eq:Green}). Inequalities (\ref{eq:W1-1}), (\ref{eq:W2-1})
and (\ref{eq:W4-1}) do not improve, while for (\ref{eq:W3-1})
we have
\begin{align}
\int_{\partial'B_{\frac{R}{\delta_{i}}}^{+}}|\bar{\xi}-y|^{2-n}b_{i}(\xi)w_{i}(\xi)d\bar{\xi}) & \le\delta_{i}^{3}(1+|y|)^{-2}\label{eq:W3-2}
\end{align}
for $|y|\le\frac{R}{2\delta_{i}}$, getting
\begin{equation}
|w_{i}(y)|\le\delta_{i}^{3}(1+|y|)^{-2}\text{ for }|y|\le\frac{R}{2\delta_{i}}.\label{eq:wi-improv2-1}
\end{equation}
We iterate again to further improve estimate (\ref{eq:W3-2}), until
we reach 
\begin{equation}
|w_{i}(y)|\le C\delta_{i}^{3}(1+|y|)^{5-n}\text{ for }|y|\le\frac{R}{2\delta_{i}},\label{eq:wi-improv2-1-1}
\end{equation}
which proves the first claim.

To prove the estimate for $y_{n}\frac{\partial}{\partial y_{n}}w_{i}$,
we differentiate the Green formula obtaining
\begin{align*}
y_{n}\frac{\partial}{\partial y_{n}}w_{i}(y)= & -y_{n}\int_{B_{\frac{R}{\delta_{i}}}^{+}}\frac{\partial}{\partial y_{n}}G_{i}(\xi,y)Q_{i}(\xi)d\mu_{\hat{g}_{i}}(\xi)-y_{n}\int_{\partial^{+}B_{\frac{R}{\delta_{i}}}^{+}}\frac{\partial}{\partial y_{n}}\frac{\partial G_{i}}{\partial\nu}(\xi,y)w_{i}(\xi)d\sigma_{\hat{g}_{i}}(\xi)\\
 & +\int_{\partial'B_{\frac{R}{\delta_{i}}}^{+}}y_{n}\frac{\partial}{\partial y_{n}}G_{i}(\xi,y)\left(b_{i}(\xi)w_{i}(\xi)-\bar{Q}_{i}(\xi)\right)d\sigma_{\hat{g}_{i}}(\xi),
\end{align*}
ans since $\frac{\partial}{\partial y_{n}}G_{i}(\xi,y)=O(|\xi-y|^{1-n})$,
we can proceed as above for the first two integrals. Then we use the
trivial estimate $|y_{n}|\le(1+|y|)$ to obtain the desired inequality.
The last term is more delicate, since we cannot use directly estimate
(\ref{eq:ALstimagreen}), for the restriction on the exponents. Anyway,
since $\xi_{n}=0$ on $\partial'B_{\frac{R}{\delta_{i}}}^{+}$, we
have 
\[
\left.\frac{\partial}{\partial y_{n}}G_{i}(\xi,y)\right|_{\partial'B_{\frac{R}{\delta_{i}}}^{+}}=O(|\xi-y|^{-n}y_{n})
\]
and, since $y_{n}^{2}\le|\xi-y|^{2}$ on $\partial'B_{\frac{R}{\delta_{i}}}^{+}$,
we conclude 
\[
\left.y_{n}\frac{\partial}{\partial y_{n}}G_{i}(\xi,y)\right|_{\partial'B_{\frac{R}{\delta_{i}}}^{+}}=O(|\xi-y|^{-n}y_{n}^{2})=O(|\xi-y|^{2-n}).
\]
At this point we have 
\begin{multline*}
\int_{\partial'B_{\frac{R}{\delta_{i}}}^{+}}y_{n}\frac{\partial}{\partial y_{n}}G_{i}(\xi,y)\left(b_{i}(\xi)w_{i}(\xi)-\bar{Q}_{i}(\xi)\right)d\sigma_{\hat{g}_{i}}(\xi)\\
\le C\int_{\partial'B_{\frac{R}{\delta_{i}}}^{+}}|\xi-y|^{2-n}\left(b_{i}(\xi)w_{i}(\xi)-\bar{Q}_{i}(\xi)\right)d\sigma_{\hat{g}_{i}}(\xi)
\end{multline*}
and we are in position to use (\ref{eq:ALstimagreen}). Then we can
obtain the desired estimate with the same technique of Lemma \ref{lem:coreLemma}. 

To prove the estimates for $\frac{\partial}{\partial y_{k}}w_{i}$,
we have to differentiate equation (\ref{eq:wi}), getting for $k=1,\dots n-1$
\[
\left\{ \begin{array}{cc}
L_{\hat{g}_{i}}\frac{\partial}{\partial y_{k}}w_{i}=\frac{\partial}{\partial y_{k}}Q_{i}-\frac{\partial}{\partial y_{k}}R_{g}w_{i} & \text{ in }B_{\frac{R}{\delta_{i}}}^{+}(0)\\
B_{\hat{g}_{i}}\frac{\partial}{\partial y_{k}}w_{i}=\frac{\partial}{\partial y_{k}}\left[\bar{Q}_{i}-b_{i}w_{i}\right]-\left(\frac{\partial}{\partial y_{k}}h_{g}\right)w_{i} & \text{ on }\partial'B_{\frac{R}{\delta_{i}}}^{+}(0)
\end{array}\right.
\]
and we can repeat the strategy contained in Lemma \ref{lem:coreLemma}
and in this proof to obtain the claim. For the estimate on the second
derivatives we proceed analogously.
\end{proof}

\section{\label{sec:Poho}A Pohozaev type identity}

We present here an analogous of the well known Pohozaev identity.
\begin{thm}[Pohozaev Identity]
\label{thm:poho} Let $u$ a $C^{2}$-solution of the following problem
\[
\left\{ \begin{array}{cc}
L_{g}u=0 & \text{ in }B_{r}^{+}\\
B_{g}u+(n-2)f^{-\tau}u^{p}=0 & \text{ on }\partial'B_{r}^{+}
\end{array}\right.
\]
for $B_{r}^{+}=\psi_{q}^{-1}(B_{g}^{+}(q,r))$ for $q\in\partial M$,
with $\tau=\frac{n}{n-2}-p>0$. 
\[
P(u,r):=\int\limits _{\partial^{+}B_{r}^{+}}\left(\frac{n-2}{2}u\frac{\partial u}{\partial r}-\frac{r}{2}|\nabla u|^{2}+r\left|\frac{\partial u}{\partial r}\right|^{2}\right)d\sigma_{r}+\frac{r(n-2)}{p+1}\int\limits _{\partial(\partial'B_{r}^{+})}f^{-\tau}u^{p+1}d\bar{\sigma}_{g}.
\]
Then
\begin{multline*}
P(u,r)=-\int\limits _{B_{r}^{+}}\left(y^{a}\partial_{a}u+\frac{n-2}{2}u\right)[(L_{g}-\Delta)u]dy+\frac{n-2}{2}\int\limits _{\partial'B_{r}^{+}}\left(\bar{y}^{k}\partial_{k}u+\frac{n-2}{2}u\right)h_{g}ud\bar{y}\\
-\frac{\tau(n-2)}{p+1}\int\limits _{\partial'B_{r}^{+}}\left(\bar{y}^{k}\partial_{k}f\right)f^{-\tau-1}u^{p+1}d\bar{y}+\left(\frac{n-1}{p+1}-\frac{n-2}{2}\right)\int\limits _{\partial'B_{r}^{+}}(n-2)f^{-\tau}u^{p+1}d\bar{y}.
\end{multline*}
We recall that $a=1,\dots,n$, $k=1,\dots,n-1$ and $y=(\bar{y},y_{n})$,
where $\bar{y}\in\mathbb{R}^{n-1}$ and $y_{n}\ge0$.
\end{thm}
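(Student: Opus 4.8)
The plan is to derive the identity from the Euclidean Rellich--Pohozaev differential identity written in the Fermi coordinate chart $\psi_q$, and then to process the contributions along the flat face $\partial'B_r^+$ by means of the boundary condition together with a tangential integration by parts. Let $\Delta=\sum_a\partial^2_{aa}$ be the Euclidean Laplacian in the coordinates $y=(\bar y,y_n)$ and set $Xu:=y^a\partial_a u+\tfrac{n-2}{2}u$, the generator of the conformal group acting on $u$. The starting point is the pointwise identity
\begin{equation}
\operatorname{div}\!\left[(y^a\partial_a u)\nabla u-\tfrac12|\nabla u|^2\,y+\tfrac{n-2}{2}\,u\,\nabla u\right]=(Xu)\,\Delta u,
\label{eq:rellichpoho}
\end{equation}
verified by a direct computation in which the $|\nabla u|^2$ terms cancel because $1-\tfrac n2+\tfrac{n-2}{2}=0$ and the terms $y^a\partial_a(|\nabla u|^2)$ cancel as well. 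Since $u$ solves $L_gu=0$ in $B_r^+$, one has the purely algebraic relation $\Delta u=\Delta u-L_gu=-(L_g-\Delta)u$, and substituting this on the right-hand side of \eqref{eq:rellichpoho} is exactly what produces the first volume integral on the right-hand side of the Theorem.

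First I would integrate \eqref{eq:rellichpoho} over $B_r^+$ and apply the divergence theorem, splitting $\partial B_r^+=\partial^+B_r^+\cup\partial'B_r^+$. On $\partial^+B_r^+$ the outward unit normal is $y/r$, so $y^a\partial_a u=r\,\partial_r u$ and $y\cdot\nu=r$; inserting these, the surface integrand collapses precisely to $\tfrac{n-2}{2}u\,\partial_r u-\tfrac r2|\nabla u|^2+r|\partial_r u|^2$, i.e. to the first surface integral in $P(u,r)$. On $\partial'B_r^+=\{y_n=0\}$ the unit normal of $B_r^+$ is $\pm e_n$, the term $-\tfrac12|\nabla u|^2\,y$ gives no contribution since $y_n=0$ there, and $y^a\partial_a u=\bar y^k\partial_k u$; hence $\partial'B_r^+$ contributes a term proportional to $\int_{\partial'B_r^+}\bigl(\bar y^k\partial_k u+\tfrac{n-2}{2}u\bigr)\partial_n u\,d\bar y$.

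Next I would eliminate the normal derivative on $\partial'B_r^+$ using the boundary condition, which expresses $\partial_n u$ (equivalently $\partial u/\partial\nu$) as a linear combination of $h_gu$ and $f^{-\tau}u^p$. This produces on $\partial'B_r^+$ a mean-curvature term proportional to $\int_{\partial'B_r^+}\bigl(\bar y^k\partial_k u+\tfrac{n-2}{2}u\bigr)h_gu\,d\bar y$ and a term proportional to $\int_{\partial'B_r^+}\bigl(\bar y^k\partial_k u+\tfrac{n-2}{2}u\bigr)f^{-\tau}u^p\,d\bar y$. In the latter I would write $(\bar y^k\partial_k u)\,u^p=\tfrac1{p+1}\bar y^k\partial_k(u^{p+1})$ and integrate by parts over the flat disk $\partial'B_r^+\subset\mathbb{R}^{n-1}$; using $\partial_k(\bar y^k f^{-\tau})=(n-1)f^{-\tau}-\tau f^{-\tau-1}\bar y^k\partial_kf$ (sum over $k=1,\dots,n-1$) and $\bar y\cdot\bar\nu=r$ on $\partial(\partial'B_r^+)$, this generates three pieces: the boundary integral over $\partial(\partial'B_r^+)$, which I move to the left-hand side to complete $P(u,r)$; a multiple of $\int_{\partial'B_r^+}f^{-\tau-1}(\bar y^k\partial_kf)u^{p+1}\,d\bar y$, which is the $\tau$-term in the statement; and a multiple of $\int_{\partial'B_r^+}f^{-\tau}u^{p+1}\,d\bar y$ with coefficient $\propto(n-1)/(p+1)$. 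Adding the last piece to the contribution of the $\tfrac{n-2}{2}u$ part of $Xu$ applied to the nonlinearity yields the combined coefficient $\tfrac{n-1}{p+1}-\tfrac{n-2}{2}$ appearing in the last term.

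The only genuinely delicate point is the careful bookkeeping of signs and orientations: the direction of the outward normal $\nu$ to $\partial M$ in the chosen Fermi coordinates and the resulting sign in $\partial u/\partial\nu=\mp\partial_n u$, the orientation of $\partial'B_r^+$ inside $\partial B_r^+$, the sign convention for the mean curvature $h_g$, and the consistent use of $\Delta u=-(L_g-\Delta)u$. There is no analytic obstacle beyond the regularity $u\in C^2$ already assumed, which makes all the integrations by parts legitimate; once the terms listed above are collected with the correct signs, the stated identity follows.
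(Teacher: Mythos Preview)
Your proposal is correct and is exactly the approach the paper has in mind: the paper's proof consists of the single sentence ``multiply the equation by $y^{a}\partial_{a}u$ and integrate by parts,'' referring to \cite[Prop.~3.1]{Al} for details, and what you outline---the Rellich--Pohozaev divergence identity, the split $\partial B_r^+=\partial^+B_r^+\cup\partial'B_r^+$, substitution of the boundary condition for $\partial_n u$, and the tangential integration by parts of $\bar y^k\partial_k(u^{p+1})$ over $\partial'B_r^+$---is precisely that computation. Your caveat about sign and orientation bookkeeping is apt, since the paper itself carries conflicting sign conventions for $L_g$ and $B_g$ between the introduction and the notation section.
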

\begin{proof}
The proof is essentially identical to the classical Pohozaev identity:
we multiply equation by $y^{a}\partial_{a}u$ and we integrate by
parts. All the details can be found in \cite[Prop. 3.1]{Al}.
\end{proof}

\section{\label{sec:Sign}Sign estimates of Pohozaev identity terms}

In this section, we want to estimate $P(u_{i},r)$, where $\left\{ u_{i}\right\} _{i}$
is a family of solutions of (\ref{eq:Prob-i}) which has an isolated
simple blow up point $x_{i}\rightarrow x_{0}$.

Since the leading term of $P(u_{i},r)$ will be $-\int_{B_{r/\delta_{i}}^{+}}\left(y^{b}\partial_{b}u+\frac{n-2}{2}u\right)\left[(L_{\hat{g}_{i}}-\Delta)v\right]dy$
we set
\begin{equation}
R(u,v)=-\int_{B_{r/\delta_{i}}^{+}}\left(y^{b}\partial_{b}u+\frac{n-2}{2}u\right)\left[(L_{\hat{g}_{i}}-\Delta)v\right]dy.\label{eq:Ruv}
\end{equation}

\begin{prop}
\label{prop:segno}Let $x_{i}\rightarrow x_{0}$ be an isolated simple
blow-up point for $u_{i}$ solutions of (\ref{eq:Prob-i}). Then,
fixed $r$, we have 
\begin{align*}
P(u_{i},r)\ge & \delta_{i}^{4}\frac{(n-2)\omega_{n-2}I_{n}^{n}}{(n-1)(n-3)(n-5)(n-6)}\left[\frac{\left(n-2\right)}{6}|\bar{W}(x_{i})|^{2}+\frac{4(n-8)}{(n-4)}R_{nlnj}^{2}(x_{i})\right]\\
 & -2\delta_{i}^{4}\int_{\mathbb{R}_{+}^{n}}\gamma_{x_{i}}\Delta\gamma_{x_{i}}dy+o(\delta_{i}^{4}).
\end{align*}
Here $\displaystyle{I_n^n=\int_0^\infty \frac{s^n}{(1+s^2)^n}ds>0}$.
\end{prop}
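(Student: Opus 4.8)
The strategy is to insert the refined expansion of $v_i$ from Proposition \ref{prop:stimawi} into the Pohozaev identity of Theorem \ref{thm:poho}, evaluated at a fixed radius $r$, after rescaling $y \mapsto \delta_i y$ so that the integration domain becomes $B_{r/\delta_i}^+$. I would first argue that, among the boundary terms making up $P(u_i,r)$, the terms on $\partial^+ B_r^+$ (i.e. the sphere of fixed radius $r$) are of order $\delta_i^{n-2}$ by Proposition \ref{prop:4.3}(1) and the Green function estimates, hence negligible compared with $\delta_i^4$ once $n>6$; and that the $\tau_i$-terms are controlled by Lemma \ref{lem:taui} ($\tau_i \le C\delta_i^3$), so after multiplication by the extra factors they contribute at most $o(\delta_i^4)$. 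This isolates the interior term $R(u_i,v_i)$ defined in \eqref{eq:Ruv} as the leading contribution, together with the mean-curvature boundary term, which however is $O(|y|^4)$-small by \eqref{eq:hij} and contributes nothing at order $\delta_i^4$.

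Next I would compute $R(u_i,v_i)$ using $v_i = U + \delta_i^2\gamma_{x_i} + w_i$ with $|w_i| \le C\delta_i^3(1+|y|)^{5-n}$ and the corresponding derivative bounds from Proposition \ref{prop:stimawi}, together with the expansion \eqref{eq:L-Delta} of $(L_{\hat g_i}-\Delta)$ and the metric expansion \eqref{eq:gij}. The point of the defining equation \eqref{eq:vqdef} for $\gamma_{x_i}$ is precisely that it cancels the $\delta_i^2$-order piece of $(L_{\hat g_i}-\Delta)(U+\delta_i^2\gamma_{x_i})+\delta_i^2\Delta\gamma_{x_i}$, so the surviving leading term in $R(u_i,v_i)$ is of order $\delta_i^4$ and comes from two sources: the quadratic curvature terms at fourth order in \eqref{eq:gij} paired against $U$ (and against the cross term $U$--$\gamma_{x_i}$ and $\gamma_{x_i}$--$U$), and the term $\delta_i^2\cdot\delta_i^2\bigl[\tfrac13\bar R_{ikjl}y_ky_l+R_{ninj}y_n^2\bigr]\partial_{ij}^2\gamma_{x_i}$. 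Carrying out the angular integrations against the conformal Killing field $y^b\partial_b U + \tfrac{n-2}{2}U$ and using the Ricci identities \eqref{eq:Ricci}, \eqref{eq:Rnnnn}, the Weyl-tensor identity \eqref{eq:Rii}, and the normalization of $\gamma_{x_i}$ (in particular \eqref{new} and \eqref{eq:Uvq}), the curvature contractions collapse to the combination $\tfrac{n-2}{6}|\bar W(x_i)|^2 + \tfrac{4(n-8)}{n-4}R_{nlnj}^2(x_i)$ times the explicit constant, while the $\gamma_{x_i}$--$\gamma_{x_i}$ cross contribution is recognized via integration by parts and \eqref{eq:vqdef} as $-2\delta_i^4\int_{\mathbb{R}_+^n}\gamma_{x_i}\Delta\gamma_{x_i}\,dy$, which is $\ge 0$ by \eqref{new}.

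The main obstacle is the bookkeeping in this last computation: one must track every term of \eqref{eq:gij} up to order four, integrate products of $U$, $\partial U$, $\partial^2 U$, $\gamma_{x_i}$ and $\partial^2\gamma_{x_i}$ against the dilation vector field over $\mathbb{R}_+^n$, and verify that the half-space boundary contributions along $\partial'B^+$ (from integration by parts and from the conformal factor $|\hat g_i|^{1/2}$ in \eqref{eq:L-Delta}) either vanish by the Neumann condition in \eqref{eq:vqdef} or are absorbed into $o(\delta_i^4)$; the decay rates in \eqref{eq:gradvq} are exactly what make all the relevant integrals convergent for $n\ge 8$ and the error integrals $o(\delta_i^4)$. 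Once the coefficient of $\delta_i^4$ is identified as a nonnegative constant times $\bigl[\tfrac{n-2}{6}|\bar W(x_i)|^2+\tfrac{4(n-8)}{n-4}R_{nlnj}^2(x_i)\bigr]$ plus $-2\int\gamma_{x_i}\Delta\gamma_{x_i}$, the inequality follows, with the constant $\dfrac{(n-2)\omega_{n-2}I_n^n}{(n-1)(n-3)(n-5)(n-6)}$ emerging from the explicit radial integrals $I_n^n$ and the standard identities for moments of $U$.
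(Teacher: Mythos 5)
Your overall strategy --- apply the Pohozaev identity, decompose $v_i = U + \delta_i^2\gamma_{x_i} + w_i$ using Proposition \ref{prop:stimawi}, and compute the leading interior term $R(\cdot,\cdot)$ term by term, with the $\gamma$-term normalized by \eqref{eq:vqdef} so that the $O(\delta_i^2)$ piece cancels --- is essentially the paper's, and the decomposition of the interior integral, the role of \eqref{eq:Rii}, \eqref{eq:Rnnnn}, \eqref{new} and the identification of the constant via the moments $I_n^n$ all match.

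However, there is a genuine gap in your treatment of the $\tau_i$-terms in the Pohozaev identity. You claim that since $\tau_i \le C\delta_i^3$ (Lemma \ref{lem:taui}), ``after multiplication by the extra factors they contribute at most $o(\delta_i^4)$.'' This does not hold: the relevant integrals $\int_{\partial'B_r^+}f_i^{-\tau_i}u_i^{p_i+1}\,d\bar y$ and $\int_{\partial'B_r^+}(\bar y^k\partial_k f_i)f_i^{-\tau_i-1}u_i^{p_i+1}\,d\bar y$ are $O(1)$, not small --- after rescaling $\bar y = \delta_i\bar z$ one finds $\delta_i^{n-1-\frac{p_i+1}{p_i-1}}\int v_i^{p_i+1}d\bar z$ with exponent $\to 0$ and $\int v_i^{p_i+1}\to\int U^{2(n-1)/(n-2)}>0$. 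Hence the $\tau_i$-terms are only $O(\tau_i) = O(\delta_i^3)$, which is \emph{larger} than the $\delta_i^4$ you are trying to isolate, and of no a priori sign. Your estimate therefore cannot close the argument. The paper instead rewrites the two $\tau_i$-terms as $\frac{\tau_i(n-2)}{p_i+1}\left[\frac{n-2}{2}\int f_i^{-\tau_i}u_i^{p_i+1} - \int(\bar y^k\partial_k f_i)f_i^{-\tau_i-1}u_i^{p_i+1}\right]$ and observes that for $r$ sufficiently small (since $|\bar y^k\partial_k f_i|\le Cr$ on $\partial'B_r^+$ and the $f_i$ are bounded in $C^1$ and away from $0$) this bracket is nonnegative, so the whole $\tau_i$-contribution can simply be discarded because the statement asks for a \emph{lower} bound. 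You need this sign argument; a size bound alone is insufficient. A smaller point: the terms on $\partial^+B_r^+$ are part of the \emph{definition} of $P(u_i,r)$, not of the right-hand side of the Pohozaev identity, so they are not something to estimate and discard when lower-bounding $P$ --- your opening remark about them being $O(\delta_i^{n-2})$ belongs to the proof of Proposition \ref{prop:7.1}, not here.
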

\begin{proof}
We have, by Theorem \ref{thm:poho}, and recalling that $\tau_{i}=\frac{n}{n-2}-p_{i}$,
\begin{multline*}
P(u_{i},r)=-\int\limits _{B_{r}^{+}}\left(y^{a}\partial_{a}u_{i}+\frac{n-2}{2}u_{i}\right)[(L_{g_{i}}-\Delta)u_{i}]dy\\
+\frac{n-2}{2}\int\limits _{\partial'B_{r}^{+}}\left(\bar{y}^{k}\partial_{k}u_{i}+\frac{n-2}{2}u_{i}\right)h_{g_{i}}u_{i}d\bar{y}\\
+\frac{\tau_{i}(n-2)}{p_{i}+1}\left[\frac{n-2}{2}\int\limits _{\partial'B_{r}^{+}}f_{i}^{-\tau_{i}}u_{i}^{p_{i}+1}d\bar{y}
-\int\limits _{\partial'B_{r}^{+}}\left(\bar{y}^{k}\partial_{k}f_{i}\right)f_{i}^{-\tau_i-1}u_{i}^{p_{i}+1}d\bar{y}\right].
\end{multline*}
where $B_{r}^{+}$ is the counterimage of $B_{g_{i}}^{+}(x_{i},r)$
by $\psi_{x_{i}}$. Since $f_{i}$ are positive, bounded away from
$0$, and bounded in the $C^{1}$ topology, we can choose $r$ sufficiently
small in order to have 
\[
\frac{\tau_{i}(n-2)}{p_{i}+1}\left[\frac{n-2}{2}\int\limits _{\partial'B_{r}^{+}}f_{i}^{-\tau_{i}}u_{i}^{p_{i}+1}d\bar{y}-\int\limits _{\partial'B_{r}^{+}}\left(\bar{y}^{k}\partial_{k}f_{i}\right)f_{i}^{-\tau-1}u_{i}^{p_{i}+1}d\bar{y}\right]\ge0.
\]
Now, set 
\[
v_{i}(y):=\delta_{i}^{\frac{1}{p_{i}-1}}u_{i}(\delta_{i}y)\text{ for }y\in B_{\frac{R}{\delta_{i}}}^{+}(0).
\]
After a change of variables we obtain
\begin{align*}
P(u_{i},r) & \ge-\delta_{i}^{n-2-\frac{2}{p_{i}-1}}\int_{B_{r/\delta_{i}}^{+}}\left(y^{b}\partial_{b}v_i+\frac{n-2}{2}v_{i}\right)\left[(L_{\hat{g}_{i}}-\Delta)v_{i}\right]dy\\
 & +\frac{n-2}{2}\delta_{i}^{n-2-\frac{2}{p_{i}-1}}\int_{\partial'B_{r}^{+}}\left(y^{b}\partial_{b}v_{i}+\frac{n-2}{2}v_{i}\right)h_{g_{i}}(\delta_{i}y)v_{i}d\bar{y}.
\end{align*}
Since $h_{g_{i}}(\delta_{i}y)=O(\delta_{i}^{4}|y|^{4})$ and $\lim_{i}\delta_{i}^{n-2-\frac{2}{p_{i}-1}}=1$,
we have

\begin{multline*}
\delta_{i}^{n-2-\frac{2}{p_{i}-1}}\int_{\partial'B_{r/\delta_{i}}^{+}}\left(y^{b}\partial_{b}v_{i}+\frac{n-2}{2}v_{i}\right)h_{g_{i}}(\delta_{i}y)v_{i}d\bar{y}\\
=O(\delta_{i}^{5})\int_{\partial'B_{r/\delta_{i}}^{+}}(1+|y|)^{4-2n}|y|^{4}dy=O(\delta_{i}^{5})\text{ for }n\ge8.
\end{multline*}
So 
\[
P(u_{i},r)\ge-\int_{B_{r/\delta_{i}}^{+}}\left(y^{b}\partial_{b}v_{i}+\frac{n-2}{2}v_{i}\right)\left[(L_{\hat{g}_{i}}-\Delta)v_{i}\right]dy+O(\delta_{i}^{5})
\]
Now define, in analogy with Proposition \ref{prop:stimawi}, 
\[
w_{i}(y):=v_{i}(y)-U(y)-\delta_{i}^{2}\gamma_{x_{i}}(y).
\]
Recalling (\ref{eq:Ruv}), we have 
\begin{align*}
P(u_{i},r) & \ge R(U,U)+R(U,\delta_{i}^{2}\gamma_{x_{i}})+R(\delta_{i}^{2}\gamma_{x_{i}},U)+R(w_{i},U)+R(U,w_{i})\\
 & +R(w_{i,}w_{i})+R(\delta_{i}^{2}\gamma_{q},\delta_{i}^{2}\gamma_{x_{i}})+R(w_{i},\delta_{i}^{2}\gamma_{x_{i}})+R(\delta_{i}^{2}\gamma_{x_{i}},w_{i})+O(\delta_{i}^{5})
\end{align*}
and, by the following Lemma \ref{lem:R(U,U)}, Lemma \ref{lem:R(U,vq)},
and Lemma \ref{lem:Raltri}, we conlcude 
\begin{align*}
P(u_{i},r)\ge & R(U,U)+R(U,\delta_{i}^{2}\gamma_{x_{i}})+R(\delta_{i}^{2}\gamma_{x_{i}},U)+o(\delta_{i}^{4})\\
= & \delta_{i}^{4}\frac{(n-2)\omega_{n-2}I_{n}^{n}}{(n-1)(n-3)(n-5)(n-6)}\left[\frac{\left(n-2\right)}{6}|\bar{W}(x_{i})|^{2}+\frac{4(n-8)}{(n-4)}R_{nlnj}^{2}(x_{i})\right]\\
 & -2\delta_{i}^{4}\int_{\mathbb{R}_{+}^{n}}\gamma_{x_{i}}\Delta\gamma_{x_{i}}dy+o(\delta_{i}^{4})
\end{align*}
and we prove the result.
\end{proof}
In order to simplify the notation, in the following lemmas we use
$\delta=\delta_{i}$ and $q=x_{i}$.
\begin{lem}
\label{lem:R(U,U)}We have 
\begin{align*}
R(U,U) & =\delta^{4}\frac{(n-2)\omega_{n-2}I_{n}^{n}}{(n-1)(n-3)(n-5)(n-6)}\left[\frac{\left(n-2\right)}{6}|\bar{W}(q)|^{2}+\frac{4(n-8)}{(n-4)}R_{ninj}^{2}\right]+o(\delta^{4})
\end{align*}
\end{lem}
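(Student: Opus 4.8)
\textbf{Proof strategy for Lemma \ref{lem:R(U,U)}.}

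The plan is to compute $R(U,U)=-\int_{B_{r/\delta}^{+}}\left(y^{b}\partial_{b}U+\frac{n-2}{2}U\right)\left[(L_{\hat g_{i}}-\Delta)U\right]dy$ by expanding the operator $(L_{\hat g_{i}}-\Delta)U$ using the formula \eqref{eq:L-Delta} together with the expansion \eqref{eq:gij} of $g_{q}^{ij}$ in conformal Fermi coordinates, and integrating term by term against the conformal vector field $j_{n}=y^{b}\partial_{b}U+\frac{n-2}{2}U$ from \eqref{eq:jn}. Since $\hat g_{i}=\tilde g_{i}(\delta y)$, the coefficients of \eqref{eq:gij} get rescaled: the quadratic term $\tfrac13\bar R_{ikjl}y_k y_l+R_{ninj}y_n^2$ picks up a factor $\delta^{2}$, the cubic term a factor $\delta^{3}$, and the quartic terms a factor $\delta^{4}$; combined with the $|y|^{-(n-2)}$-type decay of $U$ and its derivatives, one checks by \eqref{eq:ALstimagreen} that the $\delta^{2}$- and $\delta^{3}$-order contributions integrate to zero (odd in $y$ by parity, since $\bar R_{ikjl}$-type terms paired against the radially-even $j_n$ and the full integral over $\partial'$-symmetric domains vanish) or are $o(\delta^{4})$, so that the only surviving contribution is at order $\delta^{4}$, coming from the $y_k y_l y_m y_p$ and $y_n^2 y_k y_l$ and $y_n^4$ terms in \eqref{eq:gij} as well as from the cross term $\partial_k g^{kl}\partial_l U$ at the appropriate order.

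Concretely, I would first record that
\[
(L_{\hat g_{i}}-\Delta)U=\left(g_{i}^{kl}(\delta y)-\delta^{kl}\right)\partial_{kl}^{2}U+\delta\,\partial_{k}g_{i}^{kl}(\delta y)\,\partial_{l}U-\delta^{2}\frac{n-2}{4(n-1)}R_{g_{i}}(\delta y)U+O(\delta^{N}|y|^{N-1})\partial_l U,
\]
then substitute the expansion \eqref{eq:gij}, discard all terms of order $\delta^{2},\delta^{3}$ (which vanish upon integration against $j_n$ over the half-space, by parity in $\bar y$ and because the first Pohozaev integral of $j_n$ against $\partial_{ij}^2 U$ times a pure second-order curvature polynomial is arranged to vanish — this is exactly the role of $\gamma_q$ but here it is the leading bubble, so one uses instead the algebraic identity that $\int j_n (\text{harmonic-type quadratic}) =0$). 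At order $\delta^{4}$ I would collect the genuinely non-trivial integrals: those involving $\bar R_{iksl}\bar R_{jmsp}$, $R_{nins}R_{nsnj}$, and the various second-covariant-derivative terms $\bar R_{ikjl,mp}$, $R_{ninj,kl}$, $R_{ninj,nn}$. Using the contracted second Bianchi identities and the vanishing of the Ricci-type quantities in \eqref{eq:Ricci}, \eqref{eq:Rnnnn}, together with the definition of the Weyl tensor $|\bar W|^{2}=\bar R_{ikjl}\bar R_{ikjl}-\frac{4}{n-3}\bar R_{ij}\bar R_{ij}+\ldots$ (which, since $\bar R_{ij}$ vanishes at $q$, reduces to $\bar R_{ikjl}\bar R_{ikjl}$), the curvature-derivative contributions recombine into $|\bar W(q)|^{2}$ and $R_{ninj}^{2}$ with the stated rational coefficients.

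The main obstacle is the bookkeeping of order-$\delta^{4}$ terms: there are many pieces (the quartic metric terms, the product $\partial_k g^{kl}\partial_l U$ expanded to third order times the $\delta^1$ prefactor, and the scalar-curvature term expanded to second order via \eqref{eq:Rii}--\eqref{eq:Rtt}), and each must be integrated against $j_n$ over $\mathbb{R}_+^n$; the radial integrals all reduce to Beta-function-type integrals that produce the constant $I_n^n=\int_0^\infty s^n(1+s^2)^{-n}ds$ and powers like $(n-1)(n-3)(n-5)(n-6)$ in the denominator, while the angular integrals over $S^{n-2}$ produce $\omega_{n-2}$ and force the free indices to contract in the combinations $\bar R_{ikjl}\bar R_{ikjl}$ and $R_{ninj}R_{ninj}$. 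Tracking signs and ensuring the cancellations predicted by the Bianchi identities actually occur — in particular that the mixed terms $\bar R_{ikjl,mp}$ integrate (after integration by parts moving derivatives onto $U$) to a multiple of $\partial_{ii}^2\bar R_g=-\frac16|\bar W|^2$ from \eqref{eq:Rii}, and similarly that the $y_n$-direction terms produce $\partial_{tt}^2\bar R_g$ via \eqref{eq:Rtt} and then $R_{ninj}^2$ — is the delicate part; the error terms of order $\delta^{5}$ and higher, and the terms from the region $|y|\ge r/\delta$, are controlled exactly as in the proof of Lemma \ref{lem:coreLemma} using \eqref{eq:ALstimagreen} and the decay of $U$, hence are $o(\delta^{4})$. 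This computation parallels \cite[Lemma 9.7]{Al} and \cite{M1}, so I would follow that template, being careful that the presence of the boundary introduces the $R_{ninj}^2$ term with coefficient $\frac{4(n-8)}{(n-4)}$, which is what makes the $n=8$ and $n>8$ cases differ.
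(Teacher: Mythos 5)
Your outline follows essentially the same route as the paper: expand $(L_{\hat g_i}-\Delta)U$ via (\ref{eq:L-Delta}) and (\ref{eq:gij}), integrate against $j_n=y^b\partial_bU+\tfrac{n-2}{2}U$, discard the $\delta^2$- and $\delta^3$-order contributions by parity and by the same orthogonality computation that makes the source term in (\ref{eq:vqdef}) compatible, and reduce the surviving $\delta^4$ pieces to Beta-type radial integrals $I_n^n$ with angular factor $\omega_{n-2}$. One bookkeeping prediction in your outline is off and worth correcting before you attempt the computation: in the paper's proof the $\bar R_{ikjl,mp}$ and $\bar R_{iksl}\bar R_{jmsp}$ contributions from the quartic part of $g^{ij}$ integrate to zero outright by the Riemann-tensor symmetries (and the analogous pieces in $A_2$, $A_3$ cancel against each other), they do \emph{not} recombine into a Weyl term via integration by parts; the $|\bar W|^2$ contribution comes solely from the Taylor expansion of the scalar curvature in the $A_4=-\frac{(n-2)^2}{8(n-1)}\int L_3\,\delta^2 R_g(\delta y)$ piece through (\ref{eq:Rii}), and there is a further exact cancellation of all $R_{ninj,ij}$ terms across $A_1$, $A_2+A_3$, $A_4$ (display (\ref{eq:Rninj,ij})) which is what isolates the clean coefficient $\tfrac{4(n-8)}{n-4}$ on $R_{ninj}^2$.
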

\begin{proof}
Recalling that $U$ is the standard bubble and equation (\ref{eq:L-Delta}),
we obtain
\begin{align*}
R(U,U)= & \frac{\left(n-2\right)^{2}}{2}\int_{B_{r\delta^{-1}}^{+}}\frac{|y|^{2}-1}{\left[(1+y_{n})^{2}+|\bar{y}|^{2}\right]^{n+1}}ny_{i}y_{j}\left(g^{ij}(\delta y)-\delta^{ij}\right)dy\\
 & -\frac{\left(n-2\right)^{2}}{2}\int_{B_{r\delta^{-1}}^{+}}\frac{|y|^{2}-1}{\left[(1+y_{n})^{2}+|\bar{y}|^{2}\right]^{n}}\left(g^{jj}(\delta y)-1\right)dy\\
 & -\frac{\left(n-2\right)^{2}}{2}\int_{B_{r\delta^{-1}}^{+}}\frac{|y|^{2}-1}{\left[(1+y_{n})^{2}+|\bar{y}|^{2}\right]^{n}}\delta\partial_{i}g^{ij}(\delta y)y_{j}dy\\
 & -\frac{\left(n-2\right)^{2}}{8(n-1)}\int_{B_{r\delta^{-1}}^{+}}\frac{|y|^{2}-1}{\left[(1+y_{n})^{2}+|\bar{y}|^{2}\right]^{n-1}}\delta^{2}R_{g}(\delta y)dy+O(\delta^{5})\\
 & =:A_{1}+A_{2}+A_{3}+A_{4}+O(\delta^{5}).
\end{align*}
For the sake of simplicity we call $L_{1}(y):=\frac{|y|^{2}-1}{\left[(1+y_{n})^{2}+|\bar{y}|^{2}\right]^{n+1}}$,
$L_{2}(y):=\frac{|y|^{2}-1}{\left[(1+y_{n})^{2}+|\bar{y}|^{2}\right]^{n}}$
and $L_{3}(y):=\frac{|y|^{2}-1}{\left[(1+y_{n})^{2}+|\bar{y}|^{2}\right]^{n-1}}$.
By symmetry arguments we have only to consider the fourth order terms
in the expansion of $g^{ij}$. Since $B_{r\delta^{-1}}^{+}$ invades
$\mathbb{R}^{n-1}\times\mathbb{R}^{+}$ as $\delta\rightarrow0^{+}$,
and recalling the expansion of $g^{ij}$ we have

\begin{align*}
A_{1}= & \frac{\left(n-2\right)^{2}}{2}\int_{\mathbb{R}_{+}^{n}}L_{1}(y)ny_{i}y_{j}\left(g^{ij}(\delta y)-\delta^{ij}\right)dy+O(\delta^{n-2})\\
= & \frac{n\left(n-2\right)^{2}}{2}\delta^{4}\int_{\mathbb{R}_{+}^{n}}L_{1}(y)\left(\frac{1}{20}\bar{R}_{ikjl,mp}+\frac{1}{15}\bar{R}_{iksl}\bar{R}_{jmsp}\right)y_{i}y_{j}y_{k}y_{l}y_{m}y_{p}dy\\
 & +\frac{n\left(n-2\right)^{2}}{2}\delta^{4}\int_{\mathbb{R}_{+}^{n}}L_{1}(y)\left(\frac{1}{2}R_{ninj,kl}+\frac{1}{3}\text{Sym}_{ij}(\bar{R}_{iksl}R_{nsnj})\right)y_{i}y_{j}y_{k}y_{l}y_{n}^{2}dy\\
 & +\frac{n\left(n-2\right)^{2}}{2}\delta^{4}\int_{\mathbb{R}_{+}^{n}}L_{1}(y)\frac{1}{12}\left(R_{ninj,nn}+8R_{nins}R_{nsnj}\right)y_{i}y_{j}y_{n}^{4}dy+O(\delta^{5}).
\end{align*}
By the symmetries of the curvature tensor (see \cite[Proof of Lemma 8, pages 15-16]{GMP}),
we have that 
\[
\int_{\mathbb{R}_{+}^{n}}L_{1}(y)\left(\frac{1}{20}\bar{R}_{ikjl,mp}+\frac{1}{15}\bar{R}_{iksl}\bar{R}_{jmsp}\right)y_{i}y_{j}y_{k}y_{l}y_{m}y_{p}dy=0
\]
and
\[
\delta^{4}\int_{\mathbb{R}_{+}^{n}}L_{1}(y)\frac{1}{3}\text{Sym}_{ij}(\bar{R}_{iksl}R_{nsnj})y_{i}y_{j}y_{k}y_{l}y_{n}^{2}dy=0,
\]
so 
\begin{align*}
A_{1}= & \frac{n\left(n-2\right)^{2}}{4}\delta^{4}\int_{\mathbb{R}_{+}^{n}}L_{1}(y)R_{ninj,kl}y_{i}y_{j}y_{k}y_{l}y_{n}^{2}dy\\
 & +\frac{n\left(n-2\right)^{2}}{24}\delta^{4}\int_{\mathbb{R}_{+}^{n}}L_{1}(y)\left(R_{ninj,nn}+8R_{nins}R_{nsnj}\right)y_{i}y_{j}y_{n}^{4}dy+O(\delta^{5}).
\end{align*}
We point out that, in the above integral only terms involving even
powers of $y_{s}$ survive. Moreover, by direct computation we have
that 
\[
3\int_{\mathbb{R}_{+}^{n}}L_{1}(y)y_{1}^{2}y_{2}^{2}y_{n}^{2}dy=\int_{\mathbb{R}_{+}^{n}}L_{1}(y)y_{1}^{4}y_{n}^{2}dy=\frac{3}{n^{2}-1}\int_{\mathbb{R}_{+}^{n}}L_{1}(y)|\bar{y}|^{4}y_{n}^{2}dy.
\]
So, for the first term we have 
\begin{multline*}
\int_{\mathbb{R}_{+}^{n}}L_{1}(y)R_{ninj,kl}y_{i}y_{j}y_{k}y_{l}y_{n}^{2}dy=\sum_{i}R_{nini,ii}\int_{\mathbb{R}_{+}^{n}}L_{1}(y)y_{1}^{4}y_{n}^{2}dy\\
+\left(\sum_{i\neq k}R_{nini,kk}+\sum_{i\neq j}R_{ninj,ij}+\sum_{i\neq j}R_{ninj,ji}\right)\int_{\mathbb{R}_{+}^{n}}L_{1}(y)y_{1}^{2}y_{2}^{2}y_{n}^{2}dy\\
=\left(3\sum_{i}R_{nini,ii}+\sum_{i\neq k}R_{nini,kk}+\sum_{i\neq j}R_{ninj,ij}+\sum_{i\neq j}R_{ninj,ji}\right)\int_{\mathbb{R}_{+}^{n}}L_{1}(y)y_{1}^{4}y_{n}^{2}dy\\
=\left(\sum_{i,k}R_{nini,kk}+\sum_{i,j}R_{ninj,ij}+\sum_{i,j}R_{ninj,ji}\right)\frac{1}{n^{2}-1}\int_{\mathbb{R}_{+}^{n}}L_{1}(y)|\bar{y}|^{4}y_{n}^{2}dy
\end{multline*}
By (\ref{eq:Ricci}), $R_{nn,kk}=0$ for all $k=1,\dots,n-1$, and
since the curvature tensor is at least $C^{2}$, we have finally
\[
\delta^{4}\frac{n\left(n-2\right)^{2}}{4}\int_{\mathbb{R}_{+}^{n}}L_{1}(y)R_{ninj,kl}y_{i}y_{j}y_{k}y_{l}y_{n}^{2}dy=\delta^{4}\frac{n\left(n-2\right)^{2}}{2(n^{2}-1)}R_{ninj,ji}\int_{\mathbb{R}_{+}^{n}}L_{1}(y)|\bar{y}|^{4}y_{n}^{2}dy.
\]
On the other hand, by (\ref{eq:Rnnnn}) we have
\begin{multline*}
\int_{\mathbb{R}_{+}^{n}}L_{1}(y)\left(R_{ninj,nn}+8R_{nins}R_{nsnj}\right)y_{i}y_{j}y_{n}^{4}dy\\
=\left(R_{nn,nn}+8R_{nins}R_{nsni}\right)\int_{\mathbb{R}_{+}^{n}}L_{1}(y)y_{1}^{2}y_{n}^{4}dy=6R_{nins}^{2}\int_{\mathbb{R}_{+}^{n}}L_{1}(y)y_{1}^{2}y_{n}^{4}dy\\
\frac{6}{n-1}R_{nins}^{2}\int_{\mathbb{R}_{+}^{n}}L_{1}(y)|\bar{y}|^{2}y_{n}^{4}dy
\end{multline*}
so, finally
\begin{align}
A_{1}= & \delta^{4}\frac{n\left(n-2\right)^{2}}{2(n^{2}-1)}R_{ninj,ji}\int_{\mathbb{R}_{+}^{n}}L_{1}(y)|\bar{y}|^{4}y_{n}^{2}dy\label{eq:A1}\\
 & +\delta^{4}\frac{n\left(n-2\right)^{2}}{4(n-1)}R_{nins}^{2}\int_{\mathbb{R}_{+}^{n}}L_{1}(y)|\bar{y}|^{2}y_{n}^{4}dy+O(\delta^{5}).\nonumber 
\end{align}
Similarly, for $A_{2}$ there are only the fourth order terms surviving,
and again we proceed by symmetry, using again (\ref{eq:Ricci}) and
(\ref{eq:Rnnnn}), obtaining

\begin{align*}
A_{2}= & -\delta^{4}\frac{\left(n-2\right)^{2}}{2}\int_{\mathbb{R}_{+}^{n}}L_{2}(y)\left(\frac{1}{20}\bar{R}_{ikil,mp}+\frac{1}{15}\bar{R}_{iksl}\bar{R}_{imsp}\right)y_{k}y_{l}y_{m}y_{p}dy\\
 & -\delta^{4}\frac{\left(n-2\right)^{2}}{2}\int_{\mathbb{R}_{+}^{n}}L_{2}(y)\left(\frac{1}{2}R_{nini,kl}+\frac{1}{3}\text{Sym}_{ii}(\bar{R}_{iksl}R_{nsni})\right)y_{n}^{2}y_{k}y_{l}dy\\
 & -\delta^{4}\frac{\left(n-2\right)^{2}}{2}\int_{\mathbb{R}_{+}^{n}}L_{2}(y)\left[\frac{1}{3}R_{nini,nk}y_{n}^{3}y_{k}+\frac{1}{12}\left(R_{nini,nn}+8R_{nins}R_{nsni}\right)y_{n}^{4}\right]dy+O(\delta^{5})\\
= & -\delta^{4}\frac{\left(n-2\right)^{2}}{2}\int_{\mathbb{R}_{+}^{n}}L_{2}(y)\left(\frac{1}{20}\bar{R}_{kl,mp}+\frac{1}{15}\bar{R}_{iksl}\bar{R}_{imsp}\right)y_{k}y_{l}y_{m}y_{p}dy\\
 & -\delta^{4}\frac{\left(n-2\right)^{2}}{2}\int_{\mathbb{R}_{+}^{n}}L_{2}(y)\frac{1}{12}\left(R_{nn,nn}+8R_{nins}^{2}\right)y_{n}^{4}dy+O(\delta^{5})\\
= & -\delta^{4}\left(\frac{\left(n-2\right)^{2}}{40}\bar{R}_{kl,mp}+\frac{\left(n-2\right)^{2}}{30}\bar{R}_{iksl}\bar{R}_{imsp}\right)\int_{\mathbb{R}_{+}^{n}}L_{2}(y)y_{k}y_{l}y_{m}y_{p}dy\\
 & -\delta^{4}\frac{\left(n-2\right)^{2}}{4}R_{nins}^{2}\int_{\mathbb{R}_{+}^{n}}L_{2}(y)y_{n}^{4}dy+O(\delta^{5}).
\end{align*}
and, similarly,
\begin{align*}
A_{3}= & -\delta^{4}\frac{\left(n-2\right)^{2}}{2}\int_{\mathbb{R}_{+}^{n}}L_{2}(y)\left(\frac{1}{20}\bar{R}_{ikjl,mp}+\frac{1}{15}\bar{R}_{iksl}\bar{R}_{jmsp}\right)\partial_{i}\left(y_{k}y_{l}y_{m}y_{p}\right)y_{j}dy\\
 & -\delta^{4}\frac{\left(n-2\right)^{2}}{2}\int_{\mathbb{R}_{+}^{n}}L_{2}(y)\left(\frac{1}{2}R_{ninj,kl}+\frac{1}{3}\text{Sym}_{ij}(\bar{R}_{iksl}R_{nsnj})\right)y_{n}^{2}\partial_{i}\left(y_{k}y_{l}\right)y_{j}dy\\
 & -\delta^{4}\frac{\left(n-2\right)^{2}}{2}\int_{\mathbb{R}_{+}^{n}}L_{2}(y)\frac{1}{3}R_{ninj,nk}y_{n}^{3}\partial_{i}(y_{k})y_{j}dy+O(\delta^{5})\\
= & \delta^{4}\frac{\left(n-2\right)^{2}}{2}\int_{\mathbb{R}_{+}^{n}}L_{2}(y)\left(\frac{1}{20}\bar{R}_{ikjl,mp}+\frac{1}{15}\bar{R}_{iksl}\bar{R}_{jmsp}\right)\partial_{i}\left(y_{k}y_{l}y_{m}y_{p}\right)y_{j}dy\\
 & -\delta^{4}\frac{\left(n-2\right)^{2}}{2}\int_{\mathbb{R}_{+}^{n}}L_{2}(y)\left(\frac{1}{2}R_{ninj,kl}\right)y_{n}^{2}\partial_{i}\left(y_{k}y_{l}\right)y_{j}dy+O(\delta^{5})\\
= & +\delta^{4}\left(\frac{\left(n-2\right)^{2}}{40}\bar{R}_{jl,mp}+\frac{\left(n-2\right)^{2}}{30}\bar{R}_{imsl}\bar{R}_{ijsp}\right)\int_{\mathbb{R}_{+}^{n}}L_{2}(y)y_{l}y_{m}y_{p}y_{j}dy\\
 & -\delta^{4}\frac{\left(n-2\right)^{2}}{2}R_{ninj,ij}\int_{\mathbb{R}_{+}^{n}}L_{2}(y)y_{n}^{2}y_{j}^{2}dy+O(\delta^{5})
\end{align*}
and, up to relabelling, we have 
\begin{align}
A_{2}+A_{3} & =-\delta^{4}\frac{\left(n-2\right)^{2}}{4}R_{nins}^{2}\int_{\mathbb{R}_{+}^{n}}L_{2}(y)y_{n}^{4}dy\label{eq:A2+A3}\\
 & -\delta^{4}\frac{\left(n-2\right)^{2}}{2}R_{ninj,ij}\int_{\mathbb{R}_{+}^{n}}L_{2}(y)y_{n}^{2}y_{j}^{2}dy+O(\delta^{5})\nonumber \\
= & -\delta^{4}\frac{\left(n-2\right)^{2}}{4}R_{nins}^{2}\int_{\mathbb{R}_{+}^{n}}L_{2}(y)y_{n}^{4}dy\nonumber \\
 & -\delta^{4}\frac{\left(n-2\right)^{2}}{2(n-1)}R_{ninj,ij}\int_{\mathbb{R}_{+}^{n}}L_{2}(y)|\bar{y}|^{2}y_{n}^{2}dy+O(\delta^{5}).\nonumber 
\end{align}
 Finally, by (\ref{eq:Rii}) we have
\begin{align*}
A_{4} & =\delta^{4}\frac{\left(n-2\right)^{2}}{96(n-1)^{2}}|\bar{W}(q)|^{2}\int_{\mathbb{R}_{+}^{n}}L_{3}(y)|\bar{y}|^{2}dy\\
 & -\delta^{4}\frac{\left(n-2\right)^{2}}{16(n-1)}\partial_{tt}^{2}\bar{R}_{g_{q}}(q)\int_{\mathbb{R}_{+}^{n}}L_{3}(y)y_{n}^{2}dy
\end{align*}
and by (\ref{eq:Rtt}) we conclude
\begin{align}
A_{4} & =\delta^{4}\frac{\left(n-2\right)^{2}}{96(n-1)^{2}}|\bar{W}(q)|^{2}\int_{\mathbb{R}_{+}^{n}}L_{3}(y)|\bar{y}|^{2}dy\label{eq:A4}\\
 & +\delta^{4}\frac{\left(n-2\right)^{2}}{8(n-1)}R_{ninj}^{2}\int_{\mathbb{R}_{+}^{n}}L_{3}(y)y_{n}^{2}dy\nonumber \\
 & +\delta^{4}\frac{\left(n-2\right)^{2}}{8(n-1)}R_{ninj,ij}\int_{\mathbb{R}_{+}^{n}}L_{3}(y)y_{n}^{2}dy.\nonumber 
\end{align}
We want now collect the similar terms, using the result of Lemma \ref{lem:integraligamma}
to estimate all integrals. All terms containing $R_{ninj,ij}$ in
(\ref{eq:A1}), (\ref{eq:A2+A3}) and (\ref{eq:A4}) add up to
\begin{multline}
\delta^{4}R_{ninj,ji}\frac{n-2}{2(n-1)}\\
\times\left[\frac{n\left(n-2\right)}{(n+1)}\int_{\mathbb{R}_{+}^{n}}L_{1}(y)|\bar{y}|^{4}y_{n}^{2}dy-\left(n-2\right)\int_{\mathbb{R}_{+}^{n}}L_{2}(y)|\bar{y}|^{2}y_{n}^{2}dy+\frac{\left(n-2\right)}{4}\int_{\mathbb{R}_{+}^{n}}L_{3}(y)y_{n}^{2}dy\right]\\
=\delta^{4}R_{ninj,ji}\frac{(n-2)\omega_{n-2}I_{n}^{n}}{2(n-1)}\\
\times\left[\frac{12\left(n-2\right)}{(n-3)(n-4)(n-5)(n-6)}-\frac{20\left(n-2\right)}{(n-3)(n-4)(n-5)(n-6)}+\frac{8(n-2)}{(n-3)(n-4)(n-5)(n-6)}\right]\\
=0.\label{eq:Rninj,ij}
\end{multline}
Concerning terms containing $R_{ninj}^{2}$ we have 
\begin{multline}
\delta^{4}R_{ninj}^{2}\frac{\left(n-2\right)^{2}}{4}\\
\times\left[\frac{n}{(n-1)}\int_{\mathbb{R}_{+}^{n}}L_{1}(y)|\bar{y}|^{2}y_{n}^{4}dy-\int_{\mathbb{R}_{+}^{n}}L_{2}(y)y_{n}^{4}dy+\frac{1}{2(n-1)}\int_{\mathbb{R}_{+}^{n}}L_{3}(y)y_{n}^{2}dy\right]\\
\delta^{4}R_{ninj}^{2}\frac{\left(n-2\right)^{2}\omega_{n-2}I_{n}^{n}}{4}\\
\times\frac{1}{(n-1)(n-2)(n-3)(n-4)(n-5)(n-6)}\left[144-240+16(n-2)\right]\\
=\delta^{4}R_{ninj}^{2}\frac{4(n-2)(n-8)\omega_{n-2}I_{n}^{n}}{(n-1)(n-3)(n-4)(n-5)(n-6)}.\label{eq:R2ninj}
\end{multline}
In light of (\ref{eq:Rninj,ij}) and (\ref{eq:R2ninj}), we can conclude,
by (\ref{eq:A1}), (\ref{eq:A2+A3}) and (\ref{eq:A4}), 
\begin{align*}
R(U,U)= & \delta^{4}\frac{\left(n-2\right)^{2}}{96(n-1)^{2}}|\bar{W}(q)|^{2}\int_{\mathbb{R}_{+}^{n}}L_{3}(y)|\bar{y}|^{2}dy\\
 & +\delta^{4}R_{ninj}^{2}\frac{4(n-2)(n-8)\omega_{n-2}I_{n}^{n}}{(n-1)(n-3)(n-4)(n-5)(n-6)}\\
= & \delta^{4}|\bar{W}(q)|^{2}\frac{\left(n-2\right)^{2}\omega_{n-2}I_{n}^{n}}{6(n-1)(n-3)(n-5)(n-6)}\\
 & +\delta^{4}R_{ninj}^{2}\frac{4(n-2)(n-8)\omega_{n-2}I_{n}^{n}}{(n-1)(n-3)(n-4)(n-5)(n-6)}
\end{align*}
which ends the proof.
\end{proof}
\begin{lem}
\label{lem:R(U,vq)}For $n\ge8$ we have
\[
R(U,\delta^{2}\gamma_{q})+R(\delta^{2}\gamma_{q},U)=-2\delta^{4}\int_{\mathbb{R}_{+}^{n}}\gamma_{q}\Delta\gamma_{q}dy+o(\delta^{4}).
\]
\end{lem}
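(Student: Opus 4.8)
The plan is to expand both quantities $R(U,\delta^{2}\gamma_{q})$ and $R(\delta^{2}\gamma_{q},U)$ using the definition \eqref{eq:Ruv} together with the expansion \eqref{eq:L-Delta} of $L_{\hat g_i}-\Delta$, and then to use the defining equation \eqref{eq:vqdef} for $\gamma_q$ to rewrite the leading terms as an integral against $\Delta\gamma_q$. First I would observe that, by the decay estimates \eqref{eq:gradvq} on $\gamma_q$ and the decay of $U$, the domains $B_{r/\delta}^{+}$ may be replaced by $\mathbb{R}_+^n$ up to an error $O(\delta^{n-2})=o(\delta^{4})$ for $n\ge 8$, since the integrands decay fast enough; I would also discard the terms in \eqref{eq:L-Delta} coming from $\partial_k|\hat g_i|^{1/2}$ (which is $O(|y|^{n-1})$ by \eqref{eq:|g|}, hence negligible) and the curvature term $-\frac{n-2}{4(n-1)}R_{\hat g_i}$, which contributes only at order $O(\delta^{n-2})$ when paired against $\gamma_q$ because $R_{g_i}(\delta y)=O(\delta^2|y|^2)$ and the extra $\delta^2$ from $\gamma_q$ pushes it to $o(\delta^4)$. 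What survives at order $\delta^{4}$ is precisely the second-order part $\bigl(g^{kl}_i(\delta y)-\delta^{kl}\bigr)\partial_k\partial_l(\cdot)$ with the quadratic term $\frac13\bar R_{ikjl}y_ky_l+R_{ninj}y_n^2$ from \eqref{eq:gij}; the lower-order (first-derivative) pieces $\delta\,\partial_k g^{kl}_i\partial_l(\cdot)$ can be integrated by parts and recombined, as in the treatment of $A_2+A_3$ in Lemma~\ref{lem:R(U,U)}, or shown directly to be $o(\delta^4)$ when at least one factor carries the extra $\delta^2$ from $\gamma_q$.

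Next I would compute $R(U,\delta^{2}\gamma_q)$. By \eqref{eq:Ruv} this is $-\delta^{2}\int\bigl(y^b\partial_b U+\tfrac{n-2}{2}U\bigr)[(L_{\hat g_i}-\Delta)\gamma_q]\,dy$. The operator $y^b\partial_b+\frac{n-2}{2}$ applied to $U$ gives exactly $j_n$ from \eqref{eq:jn}. Writing $(L_{\hat g_i}-\Delta)\gamma_q = \delta^{2}\bigl(\frac13\bar R_{ikjl}y_ky_l+R_{ninj}y_n^2\bigr)\partial^2_{ij}\gamma_q + o(\delta^2(1+|y|)^{?})$ and using the defining PDE \eqref{eq:vqdef}, namely $-\Delta\gamma_q = \bigl(\frac13\bar R_{ikjl}y_ky_l+R_{ninj}y_n^2\bigr)\partial^2_{ij}U$, I would integrate by parts to move the quadratic coefficient and second derivatives around, eventually expressing the leading part of $R(U,\delta^2\gamma_q)$ as $-\delta^{4}\int_{\mathbb{R}_+^n} j_n\,(\text{something})$; the key algebraic identity to exploit is that $j_n = y^b\partial_b U+\frac{n-2}{2}U$ combined with the fact that $\gamma_q$ is $L^2$-orthogonal to $j_1,\dots,j_n$ (from Lemma~\ref{lem:vq}), which kills spurious boundary/bulk contributions. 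For $R(\delta^2\gamma_q,U)$, the operator $y^b\partial_b+\frac{n-2}{2}$ is applied to $\gamma_q$ this time, and $(L_{\hat g_i}-\Delta)U$ produces the quadratic-curvature term $\delta^2\bigl(\frac13\bar R_{ikjl}y_ky_l+R_{ninj}y_n^2\bigr)\partial^2_{ij}U = -\delta^2\Delta\gamma_q$ by \eqref{eq:vqdef}. Thus $R(\delta^2\gamma_q,U) = \delta^{4}\int_{\mathbb{R}_+^n}\bigl(y^b\partial_b\gamma_q+\tfrac{n-2}{2}\gamma_q\bigr)\Delta\gamma_q\,dy + o(\delta^4)$, plus a boundary term on $\partial\mathbb{R}_+^n$ that I must handle using the Neumann condition $\frac{\partial\gamma_q}{\partial y_n}=-nU^{2/(n-2)}\gamma_q$ and the orthogonality relation \eqref{eq:Uvq}.

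The main obstacle — and the heart of the lemma — is showing that the two pieces combine to exactly $-2\delta^{4}\int_{\mathbb{R}_+^n}\gamma_q\Delta\gamma_q\,dy$, which requires a careful integration-by-parts bookkeeping. The term $R(\delta^2\gamma_q,U)$ contributes $\delta^4\int (y^b\partial_b\gamma_q)\Delta\gamma_q + \frac{n-2}{2}\delta^4\int\gamma_q\Delta\gamma_q$; the dilation term $\int(y^b\partial_b\gamma_q)\Delta\gamma_q$ can be rewritten, via the Rellich–Pohozaev identity for the operator $\Delta$ (integrate $y^b\partial_b\gamma_q$ against $\Delta\gamma_q$ and integrate by parts twice), as $\frac{n-2}{2}\int\gamma_q\Delta\gamma_q$ modulo boundary terms at infinity (which vanish by the decay \eqref{eq:gradvq} for $n\ge 8$) and a boundary term on $\{y_n=0\}$ that is controlled by the Neumann condition in \eqref{eq:vqdef} together with \eqref{eq:Uvq}; this yields $R(\delta^2\gamma_q,U)=(n-2)\delta^4\int\gamma_q\Delta\gamma_q + o(\delta^4)$ — wait, one must be careful with signs here, and this is exactly where the delicate computation lies. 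Meanwhile $R(U,\delta^2\gamma_q)$ must be shown to equal $-(n-2+2)\delta^4\int\gamma_q\Delta\gamma_q$ minus that, i.e.\ the cross terms must be organized so that the total collapses to $-2\delta^4\int\gamma_q\Delta\gamma_q$; concretely I would pair $R(U,\delta^2\gamma_q)$ with $R(\delta^2\gamma_q,U)$ by integrating by parts in $R(U,\delta^2\gamma_q)$ to transfer the operator $(L_{\hat g_i}-\Delta)$ off $\gamma_q$ and onto $j_n=y^b\partial_bU+\frac{n-2}{2}U$, using self-adjointness of the quadratic-coefficient second-order piece up to explicitly computable commutator terms, and then invoke \eqref{eq:vqdef} once more. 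I expect the boundary terms on $\partial'B^+_{r/\delta}$ — where the Neumann data of both $U$ and $\gamma_q$ interact — to be the fussiest part: they must be shown to cancel using $\frac{\partial U}{\partial y_n}+(n-2)U^{n/(n-2)}=0$, $\frac{\partial\gamma_q}{\partial y_n}+nU^{2/(n-2)}\gamma_q=0$, and the orthogonality $\int_{\partial\mathbb{R}_+^n}U^{n/(n-2)}\gamma_q = 0$ from \eqref{eq:Uvq}, the last of which is precisely the relation that was built into Lemma~\ref{lem:vq} for this purpose. Once all boundary terms at infinity are dispatched via \eqref{eq:gradvq} (valid since $n\ge 8$ makes $4-n<-3$ so all the relevant surface integrals over $\partial^+B^+_{r/\delta}$ are $O(\delta^{n-4})=o(\delta^0)$ after the $\delta^4$ prefactor, hence $o(\delta^4)$), the identity follows.
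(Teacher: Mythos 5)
Your overall framework (expand $L_{\hat g_i}-\Delta$ via \eqref{eq:gij}, keep only the second-order coefficient $\frac13\bar R_{ikjl}y_ky_l+R_{ninj}y_n^2$, substitute the defining PDE \eqref{eq:vqdef}, integrate by parts, and push the domain to $\mathbb{R}^n_+$ using \eqref{eq:gradvq}) is the same as the paper's. But there are two concrete problems in the execution.

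First, the Rellich–Pohozaev computation of $R(\delta^2\gamma_q,U)$ has the wrong sign. The correct identity on $\mathbb{R}^n_+$ is
\[
\int_{\mathbb{R}^n_+}(y^b\partial_b\gamma_q)\,\Delta\gamma_q\,dy=-\tfrac{n-2}{2}\int_{\mathbb{R}^n_+}\gamma_q\Delta\gamma_q\,dy+\text{(boundary terms on }\{y_n=0\}\text{)},
\]
not $+\tfrac{n-2}{2}$ as you claimed (check on $\mathbb{R}^n$ with $w=e^{-|y|^2/2}$, or just run the two integrations by parts). With the correct sign, the bulk contribution of $R(\delta^2\gamma_q,U)$ actually cancels entirely, $\bigl(-\tfrac{n-2}{2}+\tfrac{n-2}{2}\bigr)\int\gamma_q\Delta\gamma_q=0$, and you are left with only boundary terms from the Rellich identity, which must then recombine with $R(U,\delta^2\gamma_q)$ to produce the final $-2\int\gamma_q\Delta\gamma_q$. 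Your stated intermediate answer $R(\delta^2\gamma_q,U)=(n-2)\delta^4\int\gamma_q\Delta\gamma_q+o(\delta^4)$ is therefore not correct, and the hedged ``collapse'' of the sum to $-2\delta^4\int\gamma_q\Delta\gamma_q$ is asserted rather than derived.

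Second, the cancellation mechanism you anticipate (via the Neumann condition and \eqref{eq:Uvq}) is not the one the paper uses and would be considerably more delicate. In the paper's proof the two pieces are expanded into
\[
A_1=-\int\bigl(y_b\partial_bU+\tfrac{n-2}{2}U\bigr)\phi_{ij}\,\partial_i\partial_j\gamma_q,\quad A_2+A_3=-\int\bigl(y_b\partial_b\gamma_q+\tfrac{n-2}{2}\gamma_q\bigr)\phi_{ij}\,\partial_i\partial_jU,
\]
with $\phi_{ij}:=\tfrac13\bar R_{ikjl}y_ky_l+R_{ninj}y_n^2$, and the integrations by parts are arranged so that \emph{(i)} the boundary integrals on $\partial\mathbb{R}^n_+$ vanish trivially because $\nu_i=0$ for $i\le n-1$ and $y^b\nu_b=-y_n=0$ there (no Neumann data, no \eqref{eq:Uvq} needed); \emph{(ii)} the divergence terms $\int f\,\partial_i\phi_{ij}\,\partial_jg$ vanish by $\bar R_{kj}=0$ and antisymmetry (this is the appearance of \eqref{eq:Ricci}); and \emph{(iii)} the two pieces $A_1$ and $A_2$ each produce a third-derivative integral $\pm\int y_b\gamma_q\phi_{ij}\partial_b\partial_i\partial_jU$ that cancels exactly between them, leaving the coefficients $\bigl(\tfrac{n}{2}+1\bigr)-(n+2)+\tfrac{n-2}{2}=-2$. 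This pairwise cancellation of the cubic-derivative terms is the heart of the lemma, and it is absent from your plan. Without tracking that term, or replacing it with a careful (and sign-correct) boundary accounting, the argument does not close.

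Note also that \eqref{eq:Uvq} and the $L^2$-orthogonality to $j_1,\dots,j_n$ play no role in this particular lemma; they are used elsewhere (the latter in the well-posedness of \eqref{eq:vqdef}, the former in Lemma \ref{lem:taui}).
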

\begin{proof}
In light of (\ref{eq:gij}) and (\ref{eq:gradvq}), we have that 
\begin{align*}
R(U,\delta^{2}\gamma_{q})+R(\delta^{2}\gamma_{q},U)= & -\delta^{4}\int_{\mathbb{R}_{+}^{n}}\left(y_{b}\partial_{b}U+\frac{n-2}{2}U\right)\left[\frac{1}{3}\bar{R}_{ikjl}y_{k}y_{l}+R_{ninj}y_{n}^{2}\right]\partial_{i}\partial_{j}\gamma_{q}dy\\
 & -\delta^{4}\int_{\mathbb{R}_{+}^{n}}\left(y_{b}\partial_{b}\gamma_{q}+\frac{n-2}{2}\gamma_{q}\right)\left[\frac{1}{3}\bar{R}_{ikjl}y_{k}y_{l}+R_{ninj}y_{n}^{2}\right]\partial_{i}\partial_{j}Udy+O(\delta^{n-2})\\
= & -\delta^{4}\int_{\mathbb{R}_{+}^{n}}\left(y_{b}\partial_{b}U+\frac{n-2}{2}U\right)\left[\frac{1}{3}\bar{R}_{ikjl}y_{k}y_{l}+R_{ninj}y_{n}^{2}\right]\partial_{i}\partial_{j}\gamma_{q}dy\\
 & -\delta^{4}\int_{\mathbb{R}_{+}^{n}}y_{b}\partial_{b}\gamma_{q}\left[\frac{1}{3}\bar{R}_{ikjl}y_{k}y_{l}+R_{ninj}y_{n}^{2}\right]\partial_{i}\partial_{j}Udy\\
 & -\delta^{4}\int_{\mathbb{R}_{+}^{n}}\frac{n-2}{2}\gamma_{q}\left[\frac{1}{3}\bar{R}_{ikjl}y_{k}y_{l}+R_{ninj}y_{n}^{2}\right]\partial_{i}\partial_{j}Udy+o(\delta^{4})\\
=: & \delta^{4}(A_{1}+A_{2}+A_{3})+o(\delta^{4}).
\end{align*}
Immediately we have, by the choice of $\gamma_{q}$ (see (\ref{eq:vqdef})),
that 
\begin{equation}
A_{3}=\frac{n-2}{2}\int_{\mathbb{R}_{+}^{n}}\gamma_{q}\Delta\gamma_{q}.\label{eq:A3-1}
\end{equation}
We notice that, given any two functions $f,g$, we have, by (\ref{eq:Ricci})
and by the symmetries of the curvature tensor, that 
\[
\int_{\mathbb{R}_{+}^{n}}f\partial_{i}\left[\frac{1}{3}\bar{R}_{ikjl}y_{k}y_{l}+R_{ninj}y_{n}^{2}\right]\partial_{j}gdy=0.
\]
So, integrating by parts we have 
\begin{align*}
A_{2}= & \int_{\mathbb{R}_{+}^{n}}n\gamma_{q}\left[\frac{1}{3}\bar{R}_{ikjl}y_{k}y_{l}+R_{ninj}y_{n}^{2}\right]\partial_{i}\partial_{j}Udy\\
 & +\int_{\mathbb{R}_{+}^{n}}y_{b}\gamma_{q}\partial_{b}\left[\frac{1}{3}\bar{R}_{ikjl}y_{k}y_{l}+R_{ninj}y_{n}^{2}\right]\partial_{i}\partial_{j}Udy\\
 & +\int_{\mathbb{R}_{+}^{n}}y_{b}\gamma_{q}\left[\frac{1}{3}\bar{R}_{ikjl}y_{k}y_{l}+R_{ninj}y_{n}^{2}\right]\partial_{b}\partial_{i}\partial_{j}Udy\\
 & +\int_{\partial\mathbb{R}_{+}^{n}}y_{b}\nu_{b}\gamma_{q}\left[\frac{1}{3}\bar{R}_{ikjl}y_{k}y_{l}+R_{ninj}y_{n}^{2}\right]\partial_{i}\partial_{j}Udy.
\end{align*}
We notice that $y_{b}\nu_{b}=0$ on $\partial\mathbb{R}_{+}^{n}$.
Moreover, up to relabelling,
\begin{multline*}
\int_{\mathbb{R}_{+}^{n}}y_{b}\gamma_{q}\partial_{b}\left[\frac{1}{3}\bar{R}_{ikjl}y_{k}y_{l}+R_{ninj}y_{n}^{2}\right]\partial_{i}\partial_{j}Udy\\
=\int_{\mathbb{R}_{+}^{n}}y_{s}\gamma_{q}\partial_{s}\left[\frac{1}{3}\bar{R}_{ikjl}y_{k}y_{l}\right]\partial_{i}\partial_{j}Udy+\int_{\mathbb{R}_{+}^{n}}y_{n}\gamma_{q}\partial_{n}\left[R_{ninj}y_{n}^{2}\right]\partial_{i}\partial_{j}Udy\\
=2\int_{\mathbb{R}_{+}^{n}}\gamma_{q}\left[\frac{1}{3}\bar{R}_{ikjl}y_{k}y_{l}+R_{ninj}y_{n}^{2}\right]\partial_{i}\partial_{j}Udy=-2\int_{\mathbb{R}_{+}^{n}}\gamma_{q}\Delta\gamma_{q},
\end{multline*}
so 
\begin{equation}
A_{2}=-\left(n+2\right)\int_{\mathbb{R}_{+}^{n}}\gamma_{q}\Delta\gamma_{q}+\int_{\mathbb{R}_{+}^{n}}y_{b}\gamma_{q}\left[\frac{1}{3}\bar{R}_{ikjl}y_{k}y_{l}+R_{ninj}y_{n}^{2}\right]\partial_{b}\partial_{i}\partial_{j}Udy.\label{eq:A2-1}
\end{equation}
Finally, integrating by parts twice, we have, arguing as before,
\begin{align*}
A_{1}= & \int_{\mathbb{R}_{+}^{n}}\left(\partial_{i}\left(y_{b}\right)\partial_{b}U+y_{b}\partial_{i}\partial_{b}U+\frac{n-2}{2}\partial_{i}U\right)\left[\frac{1}{3}\bar{R}_{ikjl}y_{k}y_{l}+R_{ninj}y_{n}^{2}\right]\partial_{j}\gamma_{q}dy\\
 & +\int_{\mathbb{R}_{+}^{n}}\left(y_{b}\partial_{b}U+\frac{n-2}{2}U\right)\partial_{i}\left[\frac{1}{3}\bar{R}_{ikjl}y_{k}y_{l}+R_{ninj}y_{n}^{2}\right]\partial_{j}\gamma_{q}dy\\
 & +\int_{\partial\mathbb{R}_{+}^{n}}\nu_{i}\left(y_{b}\partial_{b}U+\frac{n-2}{2}U\right)\left[\frac{1}{3}\bar{R}_{ikjl}y_{k}y_{l}+R_{ninj}y_{n}^{2}\right]\partial_{j}\gamma_{q}dy\\
= & \int_{\mathbb{R}_{+}^{n}}\left(\frac{n}{2}\partial_{i}U+y_{b}\partial_{i}\partial_{b}U\right)\left[\frac{1}{3}\bar{R}_{ikjl}y_{k}y_{l}+R_{ninj}y_{n}^{2}\right]\partial_{j}\gamma_{q}dy\\
= & -\int_{\mathbb{R}_{+}^{n}}\left(\frac{n}{2}\partial_{j}\partial_{i}U+\partial_{j}\left(y_{b}\right)\partial_{i}\partial_{b}U+y_{b}\partial_{j}\partial_{i}\partial_{b}U\right)\left[\frac{1}{3}\bar{R}_{ikjl}y_{k}y_{l}+R_{ninj}y_{n}^{2}\right]\gamma_{q}dy\\
 & -\int_{\mathbb{R}_{+}^{n}}\left(\frac{n}{2}\partial_{i}U+y_{b}\partial_{i}\partial_{b}U\right)\partial_{j}\left[\frac{1}{3}\bar{R}_{ikjl}y_{k}y_{l}+R_{ninj}y_{n}^{2}\right]\gamma_{q}dy\\
 & -\int_{\partial\mathbb{R}_{+}^{n}}\nu_{j}\left(\frac{n}{2}\partial_{i}U+y_{b}\partial_{i}\partial_{b}U\right)\left[\frac{1}{3}\bar{R}_{ikjl}y_{k}y_{l}+R_{ninj}y_{n}^{2}\right]\gamma_{q}dyx\\
= & -\int_{\mathbb{R}_{+}^{n}}\left(\left(\frac{n}{2}+1\right)\partial_{j}\partial_{i}U+y_{b}\partial_{j}\partial_{i}\partial_{b}U\right)\left[\frac{1}{3}\bar{R}_{ikjl}y_{k}y_{l}+R_{ninj}y_{n}^{2}\right]\gamma_{q}dy\\
= & \left(\frac{n}{2}+1\right)\int_{\mathbb{R}_{+}^{n}}\gamma_{q}\Delta\gamma_{q}dy-\int_{\mathbb{R}_{+}^{n}}y_{b}\partial_{j}\partial_{i}\partial_{b}U\left[\frac{1}{3}\bar{R}_{ikjl}y_{k}y_{l}+R_{ninj}y_{n}^{2}\right]\gamma_{q}dy.
\end{align*}
Recalling (\ref{eq:A3-1}) and (\ref{eq:A2-1}) we conclude 

\begin{align*}
A_{1}+A_{2}+A_{3} & =-2\int_{\mathbb{R}_{+}^{n}}\gamma_{q}\Delta\gamma_{q}
\end{align*}
which gives the proof.
\end{proof}
\begin{lem}
\label{lem:Raltri}For $n\ge8$ we have
\begin{align*}
R(\delta^{2}\gamma_{q},\delta^{2}\gamma_{q}) & =O(\delta^{6})\\
R(w_{i},w_{i}) & =O(\delta^{6})\\
R(U,w_{i})+R(w_{i},U) & =O(\delta^{5})\\
R(\delta^{2}\gamma_{q},w_{i})+R(w_{i},\delta^{2}\gamma_{q}) & =O(\delta^{5})
\end{align*}
\end{lem}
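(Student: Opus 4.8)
The plan is to prove all four estimates at once, since they follow from a single decay count. I would collect the inputs first. For the bubble, $|\nabla^{k}U(y)|\le C(1+|y|)^{2-n-k}$, $k=0,1,2$, and $|y^{b}\partial_{b}U+\frac{n-2}{2}U|=|j_{n}|\le C(1+|y|)^{2-n}$. For the correction term, (\ref{eq:gradvq}) gives $|\nabla^{k}\gamma_{x_{i}}(y)|\le C(1+|y|)^{4-n-k}$, $k=0,1,2$. For the remainder, Proposition \ref{prop:stimawi} gives $|w_{i}|\le C\delta_{i}^{3}(1+|y|)^{5-n}$, $|\partial_{j}w_{i}|\le C\delta_{i}^{3}(1+|y|)^{4-n}$, $|y_{n}\partial_{n}w_{i}|\le C\delta_{i}^{3}(1+|y|)^{5-n}$ and $|\partial_{j}\partial_{k}w_{i}|\le C\delta_{i}^{3}(1+|y|)^{3-n}$ for $j,k\in\{1,\dots,n-1\}$. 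Combining these with the expansion (\ref{eq:L-Delta}) of $L_{\hat{g}_{i}}-\Delta$ and with (\ref{eq:gij})--(\ref{eq:|g|}) — which yield $|\hat{g}_{i}^{kl}(\delta_{i}y)-\delta^{kl}|\le C\delta_{i}^{2}|y|^{2}$, $\delta_{i}|\partial_{k}\hat{g}_{i}^{kl}(\delta_{i}y)|\le C\delta_{i}^{2}|y|$, $\delta_{i}^{2}|R_{g_{i}}(\delta_{i}y)|\le C\delta_{i}^{2}$ on $B_{r/\delta_{i}}^{+}$, and in which, because we use Fermi coordinates (so $\hat{g}_{i}^{kn}\equiv 0$), the non-negligible terms involve only tangential derivatives of the argument — one gets, for $v\in\{U,\delta_{i}^{2}\gamma_{x_{i}},w_{i}\}$,
\[
|(L_{\hat{g}_{i}}-\Delta)v(y)|\le C\delta_{i}^{2}\Big[(1+|y|)^{2}\,\textstyle\max_{j,k\le n-1}|\partial_{j}\partial_{k}v|+(1+|y|)\textstyle\max_{j\le n-1}|\partial_{j}v|+|v|\Big]+C\delta_{i}^{n}(1+|y|)^{n-1}|\partial_{n}v|
\]
on $B_{r/\delta_{i}}^{+}$, where the last term comes from $\partial_{k}|\hat{g}_{i}|^{1/2}/|\hat{g}_{i}|^{1/2}$ and will turn out to be negligible.

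I would then say that $u$ is \emph{of type $(c,\sigma)$} if $|y^{b}\partial_{b}u+\frac{n-2}{2}u|\le c(1+|y|)^{-\sigma}$ and $|(L_{\hat{g}_{i}}-\Delta)u|\le C\delta_{i}^{2}c(1+|y|)^{-\sigma}$ up to the negligible piece above. The inputs listed give that $U$ is of type $(C,n-2)$, that $\delta_{i}^{2}\gamma_{x_{i}}$ is of type $(C\delta_{i}^{2},n-4)$, and that $w_{i}$ is of type $(C\delta_{i}^{3},n-5)$ (for $w_{i}$ one uses the bound on $y_{n}\partial_{n}w_{i}$ for the $y_{n}$-component of $y^{b}\partial_{b}w_{i}$). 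Hence, by the definition (\ref{eq:Ruv}) of $R$, if $u,v$ have types $(c_{u},\sigma_{u})$ and $(c_{v},\sigma_{v})$ then
\[
|R(u,v)|\le C\,c_{u}c_{v}\,\delta_{i}^{2}\int_{B_{r/\delta_{i}}^{+}}(1+|y|)^{-\sigma_{u}-\sigma_{v}}\,dy+(\text{negligible}).
\]
The last elementary ingredient is that on $B_{r/\delta_{i}}^{+}$ one has $\delta_{i}|y|\le r$, so $\int_{B_{r/\delta_{i}}^{+}}(1+|y|)^{-s}dy$ equals $O(1)$ if $s>n$, $O(|\log\delta_{i}|)$ if $s=n$, and $O(\delta_{i}^{\,s-n})$ if $s<n$.

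Plugging the four pairs into the displayed bound: $(\delta_{i}^{2}\gamma_{x_{i}},\delta_{i}^{2}\gamma_{x_{i}})$ gives $C\delta_{i}^{6}\int(1+|y|)^{8-2n}dy$; $(U,w_{i})$ and $(w_{i},U)$ give $C\delta_{i}^{5}\int(1+|y|)^{7-2n}dy$; $(\delta_{i}^{2}\gamma_{x_{i}},w_{i})$ and $(w_{i},\delta_{i}^{2}\gamma_{x_{i}})$ give $C\delta_{i}^{7}\int(1+|y|)^{9-2n}dy$; and $(w_{i},w_{i})$ gives $C\delta_{i}^{8}\int(1+|y|)^{10-2n}dy$, all integrals over $B_{r/\delta_{i}}^{+}$. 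Since $n\ge8$, the first integral is $O(1)$ (for $n>8$), the second is $O(1)$, the third is $O(\delta_{i}^{-1})$ at worst and the fourth $O(\delta_{i}^{-2})$ at worst, so the claimed orders $O(\delta_{i}^{6})$, $O(\delta_{i}^{5})$, $O(\delta_{i}^{6})$, $O(\delta_{i}^{6})$ follow (with strictly better powers for $n>8$).

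The one point not covered verbatim by the quoted results is the normal-derivative term $\delta_{i}^{n}(1+|y|)^{n-1}|\partial_{n}v|$ when $v=w_{i}$: Proposition \ref{prop:stimawi} bounds $\partial_{j}w_{i}$ only for $j\le n-1$ and $y_{n}\partial_{n}w_{i}$, not $\partial_{n}w_{i}$ pointwise. Here I would note that $|\partial_{n}w_{i}|\le C\delta_{i}^{3}(1+|y|)^{4-n}$ follows by dividing the bound on $y_{n}\partial_{n}w_{i}$ where $y_{n}\gtrsim 1+|y|$, and, near $\partial\mathbb{R}_{+}^{n}$, by boundary Schauder estimates applied to (\ref{eq:wi}), whose data $Q_{i},\bar{Q}_{i}$ have the requisite decay; carrying the factor $\delta_{i}^{n}$, this term contributes only $O(\delta_{i}^{\,n-2})=O(\delta_{i}^{6})$ to each $R(\cdot,\cdot)$ and is harmless. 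I do not expect a genuine obstacle in this lemma — the only mild subtlety is the borderline dimension $n=8$ (and $n=9,10$ for the $(w_{i},w_{i})$ term), where the auxiliary integrals cease to be bounded and grow like $|\log\delta_{i}|$ or a small negative power of $\delta_{i}$; this growth is always dominated by the extra $\delta_{i}$-powers coming from the two factors of $(L_{\hat{g}_{i}}-\Delta)$-type smallness, which is exactly why $n\ge8$ is the natural hypothesis (just as the coefficient $n-8$ surfaces in Lemma \ref{lem:R(U,U)}). In particular, for $n=8$ the bound $O(\delta_{i}^{6})$ on $R(\delta_{i}^{2}\gamma_{x_{i}},\delta_{i}^{2}\gamma_{x_{i}})$ holds up to a factor $|\log\delta_{i}|$, which is still $o(\delta_{i}^{4})$ and hence more than enough for Proposition \ref{prop:segno}.
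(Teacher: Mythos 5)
Your proposal is correct and is exactly the ``direct computation'' the paper's one-line proof alludes to: it simply fills in the decay bookkeeping for $y^{b}\partial_{b}u+\tfrac{n-2}{2}u$ and $(L_{\hat{g}_{i}}-\Delta)v$ using (\ref{eq:gradvq}), (\ref{eq:gij}), (\ref{eq:L-Delta}) and Proposition \ref{prop:stimawi}, and your treatment of the normal-derivative term and the high-order $O(\delta_{i}^{N})$ piece coming from the determinant is sound. Your honest remark that $R(\delta^{2}\gamma_{q},\delta^{2}\gamma_{q})$ only comes out as $O(\delta^{6}|\log\delta|)$ at the borderline $n=8$ rather than the stated $O(\delta^{6})$ is a genuine (if harmless) imprecision in the lemma as written, but, as you note, $o(\delta^{4})$ is all Proposition \ref{prop:segno} needs.
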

\begin{proof}
By direct computation, using the decay of the standard bubble $U$,
estimate (\ref{eq:gradvq}), (\ref{eq:gij}) and Proposition \ref{prop:stimawi}.
\end{proof}
Here we focus on the Weyl tensor of $M$, proving a result which is
in the spirit of Weyl vanishing conjecture.
\begin{prop}
\label{prop:7.1}Let $x_{i}\rightarrow x_{0}$ be an isolated simple
blow up point for $u_{i}$ solutions of (\ref{eq:Prob-i}). Then
\begin{enumerate}
\item If $n=8$ then $|\bar{W}(x_{0})|=0.$
\item If $n>8$ then $|W(x_{0})|=0.$
\end{enumerate}
\end{prop}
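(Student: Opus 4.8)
The plan is to combine the sign estimate of Proposition~\ref{prop:segno} with a global upper bound for $P(u_i,r)$ coming from the decay estimates of Section~\ref{sec:blowuppoints} and the Green's function comparison of Proposition~\ref{prop:4.3}. First I would recall that, since $x_i\to x_0$ is an isolated simple blow up point, the rescaled solutions $v_i$ converge to $U$ and the profile estimate of Proposition~\ref{prop:stimawi} holds on $B_{R/(2\delta_i)}^+$; in particular $v_i\le CU$ there. On the annular region $B_\rho^+(0)\smallsetminus B_{r_i}^+(0)$, Proposition~\ref{prop:4.3} gives $M_i u_i \le C|y|^{2-n}$, so the terms of $P(u_i,r)$ supported on $\partial^+B_r^+$ — which involve $u_i$ and its first derivatives away from the blow up point — are controlled by $M_i^{-2}$ times a bounded quantity. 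Since $M_i = \delta_i^{1/(1-p_i)}\to\infty$ and, by Proposition~\ref{prop:4.1}(3), $p_i\to n/(n-2)$, one has $M_i^{-2}=o(\delta_i^4)$ once $n>6$ (which is guaranteed here since $n\ge8$). Hence $P(u_i,r)\le o(\delta_i^4)$.

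Next I would invoke the crucial property \eqref{new} of Lemma~\ref{lem:vq}, namely $\int_{\mathbb{R}_+^n}\gamma_{x_i}\Delta\gamma_{x_i}\,dy\le 0$, so that $-2\delta_i^4\int_{\mathbb{R}_+^n}\gamma_{x_i}\Delta\gamma_{x_i}\,dy\ge 0$. Plugging the lower bound of Proposition~\ref{prop:segno} against the upper bound $P(u_i,r)\le o(\delta_i^4)$ and dividing by $\delta_i^4$ yields
\[
\frac{(n-2)\omega_{n-2}I_n^n}{(n-1)(n-3)(n-5)(n-6)}\left[\frac{n-2}{6}|\bar W(x_i)|^2+\frac{4(n-8)}{n-4}R_{nlnj}^2(x_i)\right]\le o(1).
\]
Both bracketed terms are nonnegative (here is where $n\ge8$ enters, making the coefficient $\frac{4(n-8)}{n-4}\ge0$), so each must vanish in the limit. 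Passing $i\to\infty$ and using $x_i\to x_0$ together with the continuity of the curvature tensor gives $|\bar W(x_0)|^2=0$ and, when $n>8$, also $R_{nlnj}(x_0)=0$.

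Finally I would upgrade $|\bar W(x_0)|=0$ to $|W(x_0)|=0$ when $n>8$. In Fermi coordinates at $x_0$ the boundary is umbilic, so the second fundamental form vanishes and, by the Gauss--Codazzi equations, the tangential components $\bar R_{ikjl}$ of the ambient curvature coincide with the intrinsic boundary curvature; consequently the tangential part of the Weyl tensor of $M$ is, up to lower order terms built from Ricci, controlled by $\bar W_g$, which we have just shown vanishes at $x_0$. The mixed components $W_{ninj}$ of the ambient Weyl tensor, after subtracting the Ricci contributions which vanish at $x_0$ by \eqref{eq:Ricci}, reduce precisely to the trace-free part of $R_{ninj}$, and we have shown $R_{ninj}(x_0)=0$. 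The purely normal components are determined by the previous ones through the symmetries and trace-freeness of $W$. Assembling these, all components of $W_g(x_0)$ vanish, i.e. $|W(x_0)|=0$. The main obstacle is the bookkeeping in this last algebraic step: one must carefully use umbilicity, the Fermi-coordinate identities of Remark~\ref{rem:confnorm}, and the trace-free/symmetry properties of the Weyl tensor to check that the two vanishing conditions $\bar W(x_0)=0$ and $R_{ninj}(x_0)=0$ really exhaust all independent components of $W_g$ at $x_0$; the analytic part, by contrast, is a direct consequence of Propositions~\ref{prop:segno} and~\ref{prop:4.3}.
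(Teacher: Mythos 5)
Your proposal is correct and takes essentially the same route as the paper: upper bound $P(u_i,r)=O(\delta_i^{n-2})=o(\delta_i^4)$ from the decay estimates near an isolated simple blow up point, combined with the lower bound of Proposition~\ref{prop:segno} and the sign condition $-\int_{\mathbb{R}_+^n}\gamma_{x_i}\Delta\gamma_{x_i}\,dy\ge 0$ from~\eqref{new}, forcing the curvature bracket to vanish at $x_0$. The only difference is that the paper simply cites \cite[p.~1618]{M1} for the algebraic equivalence (on an umbilic boundary, $W(q)=0$ if and only if $\bar W(q)=0$ and $R_{ninj}(q)=0$), whereas you sketch a direct Gauss--Codazzi verification of it; both are acceptable.
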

\begin{proof}
By Proposition \ref{prop:4.3} and Proposition \ref{prop:Lemma 4.4},
and since $M_{i}=\delta_{i}^{\frac{1}{1-p_{i}}}$, we have,
\begin{align*}
P(u_{i},r):= & \frac{1}{M_{i}^{2\lambda_{i}}}\int\limits _{\partial^{+}B_{r}^{+}}\left(\frac{n-2}{2}M_{i}^{\lambda_{i}}u_{i}\frac{\partial M_{i}^{\lambda_{i}}u_{i}}{\partial r}-\frac{r}{2}|\nabla M_{i}^{\lambda_{i}}u_{i}|^{2}+r\left|\frac{\partial M_{i}^{\lambda_{i}}u_{i}}{\partial r}\right|^{2}\right)d\sigma_{r}\\
 & +\frac{r(n-2)}{\left(p_{i}+1\right)M_{i}^{\lambda_{i}(p_{i}+1)}}\int\limits _{\partial(\partial'B_{r}^{+})}f^{-\tau}\left(M_{i}^{\lambda_{i}}u_{i}\right)^{p_{i}+1}d\bar{\sigma}_{g}\\
\le & \frac{C}{M_{i}^{2\lambda_{i}}}\le C\delta_{i}^{\frac{2}{p_{i}-1}\lambda_{i}}\le C\delta_{i}^{n-2}.
\end{align*}
On the other hand, recalling Proposition \ref{prop:segno}, we have 

\[
P(u_{i},r)\ge\delta_{i}^{4}\frac{(n-2)\omega_{n-2}I_{n}^{n}}{(n-1)(n-3)(n-5)(n-6)}\left[\frac{\left(n-2\right)}{6}|\bar{W}(x_{i})|^{2}+\frac{4(n-8)}{(n-4)}R_{nlnj}^{2}(x_{i})\right]+o(\delta_{i}^{4}),
\]
so we get $|\bar{W}(x_{i})|\le\delta_{i}^{2}$ if $n=8$, and $\left[\frac{\left(n-2\right)}{6}|\bar{W}(x_{i})|^{2}+\frac{4(n-8)}{(n-4)}R_{nlnj}^{2}(x_{i})\right]\le\delta_{i}^{2}$
if $n>8$. For the case $n>8$ we recall that when the boundary is
umbilic $W(q)=0$ if and only if $\bar{W}(q)=0$ and $R_{nlnj}(q)=0$
(see \cite[page 1618]{M1}), and we conclude the proof. 
\end{proof}
\begin{rem}
\label{rem:P'}Let $x_{i}\rightarrow x_{0}$ be an isolated blow up
point for $u_{i}$ solutions of (\ref{eq:Prob-i}). We set 
\begin{equation}
P'\left(u,r\right):=\int\limits _{\partial^{+}B_{r}^{+}}\left(\frac{n-2}{2}u\frac{\partial u}{\partial r}-\frac{r}{2}|\nabla u|^{2}+r\left|\frac{\partial u}{\partial r}\right|^{2}\right)d\sigma_{r},\label{eq:P'def}
\end{equation}
 so
\[
P(u_{i},r)=P'(u_{i},r)+\frac{r(n-2)}{p_{i}+1}\int\limits _{\partial(\partial'B_{r}^{+})}f_{i}^{-\tau_{i}}u_{i}^{p_{i}+1}d\bar{\sigma}_{g}
\]
and, keeping in mind that for $i$ large $M_{i}u_{i}\le C|y|^{2-n}$
by Proposition \ref{prop:4.3}, and since $f_{i}^{-\tau_{i}}\rightarrow0$
and $p_{i}\rightarrow\frac{n}{n-2}$, we have 
\begin{align}
\left|r\int\limits _{\partial(\partial'B_{r}^{+})}f_{i}^{-\tau_{i}}u_{i}^{p_{i}+1}d\bar{\sigma}_{g}\right| & \le\frac{Cr}{M_{i}^{p_{i}+1}}\int_{\begin{array}{c}
y_{n}=0\\
|\bar{y}|=r
\end{array}}|y|^{(p_{i}+1)(2-n)}d\bar{\sigma}_{g}\nonumber \\
 & \le\frac{Cr^{(p_{i}+1)(2-n)+1}}{M_{i}^{p_{i}+1}}\int_{\begin{array}{c}
y_{n}=0\\
|\bar{y}|=r
\end{array}}1d\bar{\sigma}_{g}\le C(r)\delta_{i}^{n-2}\label{eq:5.15-1}
\end{align}
for $i$ sufficiently large.

Using Proposition \ref{prop:segno}, (\ref{eq:5.15-1}), and since
$n\ge8$ we get 
\begin{equation}
P'(u_{i},r)=P(u_{i},r)-\frac{r(n-2)}{p_{i}+1}\int\limits _{\partial(\partial'B_{r}^{+})}f_{i}^{-\tau_{i}}u_{i}^{p_{i}+1}d\bar{\sigma}_{g}\ge A\delta_{i}^{4}+o(\delta^{4})\label{eq:stimaP'}
\end{equation}
where $A>0$.
\end{rem}
\begin{prop}
\label{prop:isolato->semplice}Let $x_{i}\rightarrow x_{0}$ be an
isolated blow up point for $u_{i}$ solutions of (\ref{eq:Prob-i}).
Assume $n=8$ and $|\bar{W}(x_{0})|\neq0$ or $n>8$ and $|W(x_{0})|\neq0$.
Then $x_{0}$ is isolated simple. 
\end{prop}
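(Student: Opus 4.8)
The natural approach is to argue by contradiction. Assume $x_0$ is an isolated blow-up point which is \emph{not} isolated simple. By Definition \ref{def:isolatedsimple} and the facts recalled after it, the function $w_i(r)=r^{1/(p_i-1)}\bar u_i(r)$ then has at least two critical points in $(0,\rho)$; let $\mu_i\in(0,\rho)$ be the second one. From the description of $w_i$ near the bubble (Proposition \ref{prop:4.1}) one has $w_i'<0$ on $[R_iM_i^{1-p_i},\mu_i)$, in particular $\mu_i/\delta_i\to\infty$; moreover $\mu_i\to0$, since otherwise the upper and lower bounds of Proposition \ref{prop:4.3} — which use only the one-sided monotonicity of $w_i$ and so remain valid on $B^+_{c\mu_i}$ — would force $M_i\bar u_i(r)r^{(n-2)/2}$ to converge, on each fixed interval away from $0$, to a strictly decreasing function, preventing $w_i$ from having a critical point at a fixed distance from $x_i$. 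Finally $w_i(\mu_i)\to0$ (as $w_i(\mu_i)\le w_i(R_iM_i^{1-p_i})\approx R_i^{1/(p_i-1)}\bar U(R_i)\to0$ for $n\ge7$ and $R_i\to\infty$), a fact needed below.

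Next I would rescale at the intermediate scale $\mu_i$ with the Green-function normalization: set $\Psi_i(y):=u_i(\psi_i(\mu_i y))/\bar u_i(\mu_i)$ for $y\in B^+_{\rho/\mu_i}(0)$, so that $\bar\Psi_i(1)=1$. Then $\Psi_i$ solves $L_{\check g_i}\Psi_i=0$, $B_{\check g_i}\Psi_i+(n-2)\varepsilon_i[\tilde f_i(\mu_i\cdot)]^{-\tau_i}\Psi_i^{p_i}=0$, where $\check g_i(y)=g_i(\psi_i(\mu_i y))$ converges to the Euclidean metric and $\varepsilon_i=O(\mu_i\bar u_i(\mu_i)^{p_i-1})=O(w_i(\mu_i)^{p_i-1})\to0$. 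Using the Harnack inequality (away from $0$), standard elliptic estimates, and the $B^+_{c\mu_i}$-versions of Propositions \ref{prop:4.1}, \ref{prop:4.3}, \ref{prop:Lemma 4.4}, one extracts a subsequence with $\Psi_i\to\Psi$ in $C^2_{\mathrm{loc}}(\overline{\mathbb R^n_+}\setminus\{0\})$, where $\Psi>0$ solves the \emph{linear} problem $\Delta\Psi=0$ in $\mathbb R^n_+$, $\partial\Psi/\partial\nu=0$ on $\partial'\mathbb R^n_+\setminus\{0\}$. Comparing $\Psi_i$ with the bubble tail one sees that $\Psi(y)$ is comparable to $|y|^{2-n}$ as $y\to0$, so $0$ is a non-removable singularity; and the identity $w_i'(\mu_i)=0$ rescales precisely to the statement that $s\mapsto s^{(n-2)/2}\bar\Psi(s)$ has a critical point at $s=1$. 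Reflecting $\Psi$ evenly across $\{y_n=0\}$ and applying B\^ocher's theorem gives $\Psi(y)=a|y|^{2-n}+b$ with $a>0$; since $\Psi>0$, necessarily $b\ge0$ (otherwise $\bar\Psi(r)<0$ for $r$ large). As $s^{(n-2)/2}\bar\Psi(s)$ is strictly decreasing when $b=0$, the critical point at $s=1$ forces $b>0$, and then its uniqueness forces $a=b$; thus $\Psi=a\,(|y|^{2-n}+1)$ with $a>0$.

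It remains to contradict the existence of such a $\Psi$, and this is where the hypothesis on the Weyl tensor enters. Running the estimates of Sections \ref{sec:estimates}--\ref{sec:Sign} at the bubble scale but only up to radius $r\sim\mu_i$ — which is legitimate because the ``isolated simple'' assumption was used there only through the one-sided monotonicity of $w_i$ on $[R_iM_i^{1-p_i},\mu_i)$ — and using that $\bar W(x_0)\ne0$ when $n=8$, resp.\ $W(x_0)\ne0$, i.e.\ $|\bar W(x_0)|^2+R_{ninj}^2(x_0)>0$ by umbilicity, when $n>8$, one obtains as in Remark \ref{rem:P'} that $P(u_i,\mu_i r_0)\ge A\,\delta_i^4+o(\delta_i^4)$ with $A>0$, for a small fixed $r_0\in(0,1)$. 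Since $P(u_i,\mu_i r_0)$, divided by the positive factor relating it to the quantity for $\Psi_i$, converges to $P'(\Psi,r_0)$ — the convergence takes place on $\partial^+B^+_{r_0}$, away from $0$, and the boundary nonlinear contribution to $P$ drops out because $\varepsilon_i\to0$ — we get $P'(\Psi,r_0)\ge0$. On the other hand, $\Psi$ is an explicit harmonic function with $\partial\Psi/\partial\nu=0$ on $\partial'\mathbb R^n_+$, so the flat ($\tau=0$) Pohozaev identity of Theorem \ref{thm:poho} reduces to a one-line computation from $\Psi(y)=a(|y|^{2-n}+1)$ and yields $P'(\Psi,r_0)=-c_n\,a^2<0$ for a positive dimensional constant $c_n$. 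This contradiction shows that $x_0$ must be isolated simple.

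The step I expect to be the real obstacle is making precise the ``$B^+_{c\mu_i}$-versions'' of the results of Sections \ref{sec:blowuppoints}--\ref{sec:Sign}: one must reopen the iteration in Proposition \ref{prop:stimawi} and the computation in Proposition \ref{prop:segno} with the uniform radius $\rho$ replaced by the shrinking scale $\mu_i$, and verify that the new error contributions coming from the outer hemisphere $\partial^+B^+_{c\mu_i/\delta_i}$ — now of order $(\delta_i/\mu_i)^{n-2}$ instead of $\delta_i^{n-2}$ — remain $o(\delta_i^4)$ along the whole iteration; it is conceivable that a secondary dichotomy on the size of $\mu_i$ is required here. The remaining ingredients — existence and non-removability of the secondary limit $\Psi$, and the persistence of its critical point at $s=1$ — are routine and parallel to \cite{FA,Al,HL}.
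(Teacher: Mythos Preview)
Your approach is essentially the same as the paper's: contradict by taking the second critical point $\mu_i$ (the paper writes $\rho_i$), rescale, pass to a harmonic limit $a|y|^{2-n}+b$ with $a=b>0$, and then clash the sign of $P'$ coming from Proposition~\ref{prop:segno}/Remark~\ref{rem:P'} against the direct computation $P'(a|y|^{2-n}+a,r)<0$.

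The one point where you diverge---and where you yourself flag an ``obstacle''---is the normalization of the rescaled sequence, and here the paper's choice is more economical. Instead of your $\Psi_i(y)=u_i(\psi_i(\mu_i y))/\bar u_i(\mu_i)$, the paper sets
\[
v_i(y)=\rho_i^{\frac{1}{p_i-1}}u_i(\psi_i(\rho_i y)),
\]
which is \emph{itself} a positive solution of a problem of the form~(\ref{eq:Prob-i}) for the rescaled metric $\hat g_i(y)=g_i(\rho_i y)$, with $\hat f_i(y)=f_i(\rho_i y)$. By construction $r\mapsto r^{1/(p_i-1)}\bar v_i(r)$ has exactly one critical point in $(0,1)$ and $v_i(0)\to\infty$, so $0$ is an isolated \emph{simple} blow-up point for $v_i$ at the fixed radius $1$. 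Consequently Propositions~\ref{prop:4.3}, \ref{prop:stimawi}, \ref{prop:segno} and Remark~\ref{rem:P'} apply verbatim to $v_i$ with its own scale $v_i(0)^{1-p_i}$; there is no need to ``reopen'' the iteration on a shrinking ball or to track errors of size $(\delta_i/\mu_i)^{n-2}$. Your anticipated difficulty disappears once you realize that the whole of Sections~\ref{sec:estimates}--\ref{sec:Sign} can be invoked as a black box for the rescaled sequence. (Your $\Psi_i$ differs from the paper's $v_i(0)v_i$ only by the bounded factor $w_i(\rho_i)$, so the two framings are equivalent; the paper's is simply cleaner.)

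Two minor remarks: your justification of $\mu_i\to0$ is more elaborate than needed---if $\mu_i$ stayed bounded below, then for any fixed small $\rho<\liminf\mu_i$ the function $w_i$ would have exactly one critical point in $(0,\rho)$ and $x_0$ would be isolated simple by definition; and the positivity $b\ge0$ of the constant in the limit is obtained in the paper by Liouville (after noting $G>0$) rather than by an even reflection and B\^ocher, but both routes are fine.
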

\begin{proof}
Set $w_{i}(y)=u_{i}(\psi_{i}(y))$ where $\psi_{i}$ are, as usual,
the Fermi coordinates at $x_{i}$ defined in $B_{\rho}(0)$. By assumption
$0$ is an isolated blow up point for $w_{i}$. By contradiction,
suppose that $0$ is not isolated simple. Take $R_{i}\rightarrow\infty$
and define $r_{i}:=R_{i}w_{i}^{1-p_{i}}(0)$. Then the function $r\rightarrow r^{\frac{1}{p_{i}-1}}\bar{w}_{i}(r)$
has exactly one critical point in $(0,r_{i})$. By Definition \ref{def:isolatedsimple},
since $x_{0}$ is not an isolated simple blow up point, there exist
at least two critical points of the function $r\mapsto r^{\frac{1}{p_{i}-1}}\bar{w}_{i}$
in an interval $(0,\bar{\rho}_{i})$ with $\bar{\rho}_{i}\rightarrow0$.
So, if $\rho{}_{i}$ is the second critical point, we have $0<r_{i}\le\rho_{i}<\bar{\rho}_{i}$.
We set 
\begin{equation}
v_{i}(y)=\rho_{i}^{\frac{1}{p_{i}-1}}w_{i}(\rho_{i}y)\text{ for }y\in B_{\rho/\bar{\rho}_{i}}^{+}(0).\label{eq:v-isolated-simple}
\end{equation}
 By construction we have that $0$ is an isolated simple blow up point
for $v_{i}$. Indeed, by definition of $r_{i}$,
\begin{equation}
v_{i}(0)=\rho_{i}^{\frac{1}{p_{i}-1}}w_{i}(0)=\left(\frac{\rho_{i}}{r_{i}}R_{i}\right)^{\frac{1}{p_i-1}}\ge R_{i}^{\frac{1}{p_i-1}}\rightarrow+\infty.\label{eq:viblowup}
\end{equation}
Moreover, the function $r\mapsto r^{\frac{1}{p_{i}-1}}\bar{v}_{i}(r)=\left(\rho_{i}r\right)^{\frac{1}{p_{i}-1}}\bar{w}_{i}(\rho_{i}r)$
has exactly one critical point in $(0,1)$.

By the first claim of Proposition \ref{prop:4.3} we have that $v_{i}(0)v_{i}(x)$
is uniformly bounded in the compact sets of $\mathbb{R}_{+}^{n}\smallsetminus\left\{ 0\right\} $.
Taking in account that $u_{i}$ solves (\ref{eq:Prob-i}) and $v_{i}$
solves (\ref{eq:Prob-hat}), we can prove that $v_{i}(0)v_{i}(x)\rightarrow G$
in $C_{\text{loc}}^{2}(\mathbb{R}_{+}^{n}\smallsetminus\left\{ 0\right\} )$,
where $G$ safisfies 
\[
\left\{ \begin{array}{ccc}
\Delta G=0 &  & \text{ in }\mathbb{R}_{+}^{n}\smallsetminus\left\{ 0\right\} \\
\partial_{n}G=0 &  & \text{ on }\partial\mathbb{R}_{+}^{n}\smallsetminus\left\{ 0\right\} 
\end{array}\right..
\]
It is well known that $G=a|y|^{2-n}+b(y)$, with $b$ harmonic on
$\mathbb{R}_{+}^{n}$ with Neumann boundary condition. Moreover, by
the second claim of Proposition \ref{prop:4.3}, we can show that
$a>0$. Since $G>0$, the function $b$ is non negative at infinity,
and by Liouville theorem this implies that $b$ is a constant function.
Moreover, by the equality $\left.\frac{d}{dr}r^{\frac{1}{p_{i}-1}}\bar{v}_{i}(r)\right|_{r=1}=0$,
we have $\left.\frac{d}{dr}\left(r\right)^{\frac{1}{p_{i}-1}}\bar{G}(r)\right|_{r=1}=0$,
that implies $a=b>0$. 

At this point, defined $P'\left(u,r\right)$ as in (\ref{eq:P'def})
and proceeding as in Remark \ref{rem:P'}, in analogy with (\ref{eq:stimaP'})
we have
\begin{align}
P'(v_{i}(0)v_{i},r)\ge P(v_{i}(0)v_{i},r) & -v_{i}(0)^{2}O(\delta_{i}^{n-2})\ge v_{i}(0)^{2}\left[A\delta_{i}^{4}+o(\delta_{i}^{4})\right]>0\label{eq:Pmaggiore}
\end{align}
for $i$ sufficiently large. 

On the other hand a direct computation shows that 
\begin{equation}
\lim_{i\rightarrow\infty}P'(v_{i}(0)v_{i},r)=P'(G,r)<0\label{eq:Ppos}
\end{equation}
provided $r$ sufficiently small, which contradicts (\ref{eq:Pmaggiore}).
\end{proof}

\section{\label{sec:splitting}A splitting lemma}

We start recalling a result which is analogous to \cite[Proposition 5.1]{LZ},
\cite[Lemma 3.1]{SZ}, \cite[Proposition 1.1]{HL} and \cite[Proposition 4.2]{Al},
which we refer for the proof. 
\begin{prop}
\label{prop:4.2}Given $\beta>0$ and $R>0$ there exist two constants
$C_{0},C_{1}>0$ (depending on $\beta$, $R$ and $(M,g)$) such that,
if $u$ is a solution of 
\begin{equation}
\left\{ \begin{array}{cc}
L_{g}u=0 & \text{ in }M\\
B_{g}u+(n-2)f^{-\tau}u^{p}=0 & \text{ on }\partial M
\end{array}\right.\label{eq:Prob-p}
\end{equation}
and $\max_{\partial M}u>C_{0}$, then $\tau:=\frac{n}{n-2}-p<\beta$
and there exist $q_{1},\dots,q_{N}\in\partial M$, with $N=N(u)\ge1$
with the following properties: for $j=1,\dots,N$ 
\begin{enumerate}
\item set $r_{j}:=Ru(q_{j})^{1-p}$, then $\left\{ B_{r_{j}}\cap\partial M\right\} _{j}$
are a disjoint collection;
\item we have $\left|u(q_{j})^{-1}u(\psi_{j}(y))-U(u(q_{j})^{p-1}y)\right|_{C^{2}(B_{2r_{j}}^{+})}<\beta$
(here $\psi_{j}$ are the Fermi coordinates at point $q_{j}$;
\item we have
\begin{align}
u(x)d_{\bar{g}}\left(x,\left\{ q_{1},\dots,q_{n}\right\} \right)^{\frac{1}{p-1}}\le C_{1} & \text{ for all }x\in\partial M\label{eq:Claim3-1}\\
u(q_{j})d_{\bar{g}}\left(q_{j},q_{k}\right)^{\frac{1}{p-1}}\ge C_{0} & \text{ for any }j\neq k.\label{eq:Claim3-2}
\end{align}
Here $\bar{g}$ is the geodesic distance on $\partial M$.
\end{enumerate}
\end{prop}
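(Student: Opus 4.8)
The plan is to follow the blow-up selection scheme of Schoen, in the boundary form developed in \cite{Al,HL,LZ,SZ}. There are two ingredients: a Liouville-type reduction showing that $\tau$ is forced to be small as soon as $u$ is large somewhere, and an inductive construction of the points $q_{1},\dots,q_{N}$.

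First I would prove $\tau<\beta$ for $C_{0}$ large, by contradiction. Suppose there are solutions $u_{i}$ of (\ref{eq:Prob-p}) with exponents $p_{i}$, $\max_{\partial M}u_{i}\to\infty$ and $\tau_{i}=\frac{n}{n-2}-p_{i}\ge\beta$. Let $x_{i}\in\partial M$ attain $\max_{\partial M}u_{i}$, set $M_{i}=u_{i}(x_{i})$, $\lambda_{i}=M_{i}^{1-p_{i}}\to0$, and rescale in Fermi coordinates at $x_{i}$ by $v_{i}(y):=M_{i}^{-1}u_{i}(\psi_{x_{i}}(\lambda_{i}y))$. The same scaling used before (\ref{eq:Prob-hat}) gives, on $B_{\rho/\lambda_{i}}^{+}(0)$, the equations $L_{\hat{g}_{i}}v_{i}=0$ and $\partial_{y_{n}}v_{i}+(n-2)\hat{f}_{i}^{-\tau_{i}}v_{i}^{p_{i}}=0$ on $\partial'B_{\rho/\lambda_{i}}^{+}(0)$, with $v_{i}(0)=1$; since $x_{i}$ maximises $u_{i}$ on $\partial M$ we have $v_{i}\le1$ on $\{y_{n}=0\}$, and after a harmless conformal change making $R_{g}\ge0$ the solution $u_{i}$ is subharmonic, so $v_{i}$ is locally bounded in $\overline{\mathbb{R}_{+}^{n}}$ (this is where the positive-type hypothesis enters). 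The coefficients of $L_{\hat{g}_{i}}-\Delta$ tend to zero on compact sets, so elliptic estimates give, up to a subsequence, $v_{i}\to v$ in $C_{\text{loc}}^{2}(\overline{\mathbb{R}_{+}^{n}})$ with $v>0$ bounded, $\Delta v=0$ in $\mathbb{R}_{+}^{n}$ and $\partial_{y_{n}}v+(n-2)c_{0}v^{p_{\infty}}=0$ on $\partial\mathbb{R}_{+}^{n}$, where $c_{0}>0$ and $p_{\infty}=\lim p_{i}\le\frac{n}{n-2}-\beta$ is subcritical. The Liouville theorem for this subcritical boundary value problem (see \cite{LZ,HL} and the references therein) forces $v\equiv0$, contradicting $v(0)=1$; hence $\tau<\beta$ once $C_{0}$ is large enough.

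Then I would construct the $q_{j}$ inductively, fixing $C_{0}\gg C_{1}\gg R$ in this order, all depending only on $\beta$, $R$ and $(M,g)$. Take $q_{1}$ to be a maximum point of $u|_{\partial M}$; by the rescaling analysis above --- now with $p_{\infty}=\frac{n}{n-2}$, so that the blow-up limit, being nonzero and bounded, is a standard bubble $U$ by the Li--Zhu/Escobar classification --- estimate (2) holds at $q_{1}$ provided $C_{0}$ is large. Assume $q_{1},\dots,q_{k}$ have been chosen so that (1), (2) and (\ref{eq:Claim3-2}) hold for them; set $S_{k}=\{q_{1},\dots,q_{k}\}$ and $w_{k}(x):=u(x)\,d_{\bar{g}}(x,S_{k})^{\frac{1}{p-1}}$. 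If $\max_{\partial M}w_{k}\le C_{1}$ we stop with $N=k$, and this is exactly (\ref{eq:Claim3-1}). Otherwise we choose $q_{k+1}$ among the points where $w_{k}\ge C_{1}$ so that $u(q_{k+1})$ is maximal there (the precise bookkeeping being as in \cite[Proposition~5.1]{LZ}); one checks $d_{\bar{g}}(q_{k+1},S_{k})\gg R\,u(q_{k+1})^{1-p}$, so the ball $B_{2r_{k+1}}$ with $r_{k+1}=Ru(q_{k+1})^{1-p}$ is disjoint from the previous ones --- which is (1) --- while $u$ is, up to a bounded factor, maximal on it; rescaling at $q_{k+1}$ at scale $u(q_{k+1})^{1-p}$ and applying again the elliptic estimates and the bubble classification yields (2), and the choice of $q_{k+1}$ gives (\ref{eq:Claim3-2}) for the enlarged collection. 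The process stops after finitely many steps: on each $B_{r_{j}}$ the rescaled solution is $\beta$-close to $U$, so $\int_{B_{r_{j}}\cap\partial M}u^{p+1}\,d\sigma_{g}\ge c(R)>0$, and since these pieces are pairwise disjoint while $\int_{\partial M}u^{p+1}\,d\sigma_{g}<\infty$ for the fixed smooth $u$, one gets $N=N(u)<\infty$.

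The main difficulty is the uniformity of $C_{0}$ and $C_{1}$: they cannot depend on $u$ or $p$, which forces the ordering $C_{0}\gg C_{1}\gg R$ together with a careful argument showing that, whenever $w_{k}(q_{k+1})>C_{1}$, the rescaled ball of radius $2R$ about $q_{k+1}$ misses the previously selected balls and the rescaled solution is genuinely within $\beta$ of $U$ in $C^{2}$. This is precisely the content of \cite[Proposition~5.1]{LZ}, \cite[Lemma~3.1]{SZ}, \cite[Proposition~1.1]{HL} and \cite[Proposition~4.2]{Al}, to which we refer for the complete argument; the only change needed to pass to (\ref{eq:Prob-p}) is to carry along the factor $f^{-\tau}$, which is harmless since $f$ is positive, bounded away from $0$ and bounded in $C^{1}$, and $\tau$ is small by the first step.
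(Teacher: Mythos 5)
Your proposal is correct and takes essentially the same route as the paper: the paper gives no proof at all, simply citing \cite[Proposition~5.1]{LZ}, \cite[Lemma~3.1]{SZ}, \cite[Proposition~1.1]{HL}, \cite[Proposition~4.2]{Al}, and you defer to the same references after sketching the standard Schoen-type blow-up point selection (rescaling, Liouville argument to force $\tau<\beta$ and to classify the limit as a bubble, inductive selection of points where $u\,d_{\bar g}(\cdot,S_k)^{1/(p-1)}$ is large, disjointness of the selected balls, and finiteness via the energy lower bound on each ball). The sketch is accurate and, if anything, more informative than what the paper provides.
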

This proposition states that $u$ is well approximated in strong norms
by standard bubbles in disjoint balls $B_{r_{1}},\dots B_{r_{N}}$
centered on $\partial M$. It is not yet the compactness result we
need, since we have to consider, when passing to sequence of solutions,
interaction between bubbles. The next Proposition rules out possible
accumulation of bubbles, that implies that only isolated blow up points
may occur to a blowing up sequence of solution. 
\begin{prop}
\label{prop:splitting}Assume $n\ge8$. Given $\beta,R>0$, consider
$C_{0},C_{1}$ as in the previous proposition. Assume $W(x)\neq0$
for any $x\in\partial M$ if $n>8$, or $\bar{W}(x)\neq0$ for any
$x\in\partial M$ if $n=8$. Then there exists $d=d(\beta,R)$ such
that, for any $u$ solution of (\ref{eq:Prob-p}) with $\max_{\partial M}u>C_{0}$,
we have 
\[
\min_{\begin{array}{c}
i\neq j\\
1\le i,j\le N(u)
\end{array}}d_{\bar{g}}(q_{i}(u),q_{j}(u))\ge d,
\]
where $q_{1}(u),\dots q_{N}(u)$ and $N=N(u)$ are given in the previous
proposition. 
\end{prop}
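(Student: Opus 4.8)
\emph{Plan.} I would argue by contradiction, zooming in at the scale of the smallest distance between two of the points $q_j(u)$, and then show that the resulting limit configuration is incompatible with the blow up analysis of the previous sections.

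Suppose the statement fails for some $\beta,R>0$. Then there is a sequence $u_i$ of solutions of (\ref{eq:Prob-p}) — which we view as solutions of (\ref{eq:Prob-i}) with $g_i\equiv g$, $f_i\equiv f$ — with $\max_{\partial M}u_i>C_0$ and
\[
\sigma_i:=\min_{j\neq k}d_{\bar g}\big(q_j(u_i),q_k(u_i)\big)\longrightarrow 0 .
\]
Relabelling, assume $\sigma_i=d_{\bar g}(q_1^i,q_2^i)$ and, up to a subsequence, $q_1^i\to x_0\in\partial M$. From the second inequality in item (3) of Proposition \ref{prop:4.2} one gets $u_i(q_1^i)\sigma_i^{1/(p_i-1)}\ge C_0$ and $u_i(q_2^i)\sigma_i^{1/(p_i-1)}\ge C_0$; since $\sigma_i\to0$ and $p_i-1\ge\bar p-1>0$, this forces $u_i(q_1^i),u_i(q_2^i)\to+\infty$, so $x_0$ is a blow up point and $\tau_i\to0$. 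Set $v_i(y):=\sigma_i^{1/(p_i-1)}(u_i\circ\psi_{q_1^i})(\sigma_i y)$ on $B^+_{\rho/\sigma_i}(0)$, with $\psi_{q_1^i}$ the conformal Fermi coordinates at $q_1^i$: then $v_i$ solves a problem of type (\ref{eq:Prob-hat}) with metric $\hat g_i(y)=g(\sigma_i y)\to\delta$ in $C^2_{\mathrm{loc}}$, and the rescaled points $\tilde q_j^i:=\sigma_i^{-1}\psi_{q_1^i}^{-1}(q_j^i)$ satisfy $\tilde q_1^i=0$, $\tilde q_2^i\to\tilde q_2$ with $|\tilde q_2|=1$, and $|\tilde q_j^i-\tilde q_k^i|\ge 1-o(1)$ for $j\neq k$, so only finitely many $\tilde q_j^i$ lie in any fixed ball. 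Item (3) of Proposition \ref{prop:4.2}, once rescaled, reads $v_i(y)\,\big(\operatorname{dist}(y,\{\tilde q_k^i\})\big)^{1/(p_i-1)}\le C_1(1+o(1))$; evaluated near $0$ (where the nearest $\tilde q_k^i$ is $0$ itself) it shows that $0$ is an \emph{isolated} blow up point for $v_i$ as soon as $v_i(0)\to\infty$ (Definition \ref{def:isolated}), and its second form gives $v_i(\tilde q_2^i)\ge C_0$ for every $i$.

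I would then split into two cases. \emph{Case (A): $v_i(0)$ stays bounded along a subsequence.} By item (3) of Proposition \ref{prop:4.2} away from the $\tilde q_k^i$, by item (2) near each of them (each carrying bounded height), and by the boundary Harnack inequality for the positive solution $v_i$ of the linear equation $L_{\hat g_i}v_i=0$ with a Robin condition whose coefficient $\sim v_i^{p_i-1}$ is bounded on $\partial'$, the sequence $v_i$ is locally uniformly bounded; hence $v_i\to v_\infty$ in $C^2_{\mathrm{loc}}(\overline{\mathbb R^n_+})$, a nonnegative solution of $\Delta v_\infty=0$ in $\mathbb R^n_+$, $\partial_\nu v_\infty+(n-2)v_\infty^{n/(n-2)}=0$ on $\partial\mathbb R^n_+$ (using $\tau_i\to0$, $\hat f_i^{-\tau_i}\to1$), with $v_\infty(0)\ge C_0>0$ and decay $v_\infty(y)\le C_1|y|^{-(n-2)/2}$ inherited from item (3). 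By the strong maximum principle and the Hopf lemma $v_\infty>0$, and by the classification of positive solutions of this problem on the half space (Escobar, Li--Zhu) $v_\infty$ is a standard bubble. But both $0$ and $\tilde q_2$, $|\tilde q_2|=1$, are (up to a negligible shift) local maxima of $v_i|_{\partial M}$ by item (2), hence critical points of $v_\infty|_{\partial\mathbb R^n_+}$, while a standard bubble has a unique critical point there — a contradiction. \emph{Case (B): $v_i(0)\to+\infty$.} Then $0$ is an isolated blow up point for $v_i$ and, since for large $i$ the only $\tilde q_k^i$ in $B^+_{1/2}(0)$ is $0$ itself, arguing as in the proof of Proposition \ref{prop:isolato->semplice} one checks that $0$ is an \emph{isolated simple} blow up point for $v_i$ (a second critical radius of $r\mapsto r^{1/(p_i-1)}\bar v_i(r)$ would produce a further point $\tilde q_k^i$ near $0$); in particular, possibly after a further harmless rescaling, Proposition \ref{prop:4.3} applies on a ball of radius $>1$ and yields $v_i(0)\,v_i(\tilde q_2^i)\le C|\tilde q_2^i|^{2-n}\le C$, whence $v_i(\tilde q_2^i)\le C/v_i(0)\to 0$, contradicting $v_i(\tilde q_2^i)\ge C_0>0$ (and if instead $v_i(\tilde q_2^i)\to\infty$ as well, the same estimate at $\tilde q_2^i$ gives $v_i(0)\to0$, again impossible). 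This completes the contradiction.

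The step I expect to be the main obstacle is Case (B): establishing that, after rescaling at the scale of the smallest separation, the blow up at $0$ is \emph{isolated simple} rather than a bubble tower, and controlling $v_i$ in the neck region between $0$ and $\tilde q_2$. This is precisely where the hypotheses $n\ge8$ and the non‑vanishing of the Weyl tensor are used: they enter, through the Pohozaev sign estimate of Proposition \ref{prop:segno} and the resulting lower bound (\ref{eq:stimaP'}), in the analysis of isolated simple blow up points — Proposition \ref{prop:4.3}, Proposition \ref{prop:Lemma 4.4} and Proposition \ref{prop:isolato->semplice} — which is invoked here to rule out the accumulation of the $q_j(u)$.
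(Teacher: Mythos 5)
Your overall architecture — argue by contradiction, zoom in at the scale of the smallest separation $\sigma_i$, and rule out the two cases ``rescaled heights bounded'' versus ``rescaled heights blow up'' — matches the paper's, and your Case (A) is sound (the paper invokes a Liouville theorem to get $v_\infty\equiv0$ directly, while you classify $v_\infty$ as a bubble and use uniqueness of the boundary critical point; either works). However, your Case (B) contains a genuine gap, and it is exactly the place where the real work of this proposition lies.

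In Case (B) you want to conclude $v_i(0)\,v_i(\tilde q_2^i)\le C|\tilde q_2^i|^{2-n}\le C$ from Proposition~\ref{prop:4.3}, Claim~1. This is not available. Proposition~\ref{prop:4.3} gives $M_i u_i(\psi_i(y))\le C|y|^{2-n}$ only for $y$ in the ball $B^+_\rho$ on which the blow up at $0$ is isolated simple. In the rescaled picture the second blow up point sits at distance $|\tilde q_2^i|\approx 1$, and being itself a blow up point it prevents the isolated (let alone isolated simple) property from extending past roughly radius $1/2$; the ``further harmless rescaling'' does not help, because the obstruction $\tilde q_2^i$ rescales along with the ball and always sits at the boundary of the applicability region. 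Moreover, even ignoring the radius, $\tilde q_2^i$ is itself a blow up point, so $v_i(\tilde q_2^i)\to\infty$ and the inequality of Proposition~\ref{prop:4.3} — which controls $v_i(0)v_i(y)$ only for $y$ away from \emph{all} blow up points — simply does not apply at $y=\tilde q_2^i$. The product $v_i(0)\,v_i(\tilde q_2^i)$ of the two (blowing-up) heights is not a quantity that the pointwise blow up estimates bound, and in fact it need not go to zero.

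What the paper does instead in Step~2 is the right move: after showing both $0$ and $\tilde q_2$ are isolated simple for $v_i$, it passes to the limit $v_i(0)\,v_i(\cdot)\to G$ in $C^2_{\mathrm{loc}}(\mathbb R^n_+\setminus\{0,y_2\})$, where $G(y)=a_1|y|^{2-n}+a_2|y-y_2|^{2-n}+b(y)$ with $a_1,a_2>0$ (the second positive weight comes from Proposition~\ref{prop:4.3} applied at $\tilde q_2^i$) and $b\ge0$ harmonic with Neumann condition. Expanding $G$ near $0$ one finds a strictly positive ``mass'' term $a_2|y_2|^{2-n}+b(0)>0$ added to $a_1|y|^{2-n}$, and this positive constant is precisely what contradicts the sign estimate $P'(v_i(0)v_i,r)\ge v_i(0)^2\bigl(A\delta_i^4+o(\delta_i^4)\bigr)>0$ from Remark~\ref{rem:P'}, exactly as in the proof of Proposition~\ref{prop:isolato->semplice}. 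So the conclusion of Case~(B) must go through the Pohozaev sign inequality applied to the Green's-function limit, not through a pointwise bound at the second bubble; your proposal correctly identifies this as ``the main obstacle'' but then proposes an argument that does not close it.
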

\begin{proof}
We prove the result for $N(u)=2$. The general case follows easily. 

We argue by contradiction: we suppose that there exists a sequence
of solutions $\{u_{i}\}_{i}$ of problem (\ref{eq:Prob-2}) such that
(after relabelling the indices) we have two sequence of points $q_{1}^{i},q_{2}^{i}\in\partial M$
and a point $q_{0}\in\partial M$ with $q_{1}^{i},q_{2}^{i}\rightarrow q_{0}$.
Define 
\[
\sigma_{i}:=d_{\bar{g}}(q_{1}^{i},q_{2}^{i})=\min_{a\neq b}d_{\bar{g}}(q_{a}^{i},q_{b}^{i})
\]
Now we use Fermi coordinates $\psi_{i}:B_{\rho}^{+}\rightarrow M$
centered at $q_{1}^{i}$ and we set
\[
v_{i}(y)=\sigma_{i}^{\frac{1}{p_{i}-1}}u_{i}\left(\psi_{i}(\sigma_{i}y)\right),\ y\in B_{\sigma_{i}^{-1/2}}^{+}.
\]
For $k=1,2$ we define $y_{k}^{i}$ as the point in $B_{\sigma_{i}^{-1/2}}^{+}$
such that $\psi_{i}(\sigma_{i}y_{k}^{i})=q_{k}^{1}$. Of course we
have $y_{1}^{i}=0$.

By equation (\ref{eq:Claim3-1}) of Proposition \ref{prop:4.2}, and
by definition of $v_{i},q_{k}^{i}$, we have that 
\begin{equation}
v_{i}(y_{k}^{i})=d_{\bar{g}}(q_{1}^{i},q_{2}^{i})^{\frac{1}{p_{i}-1}}u_{i}(q_{k}^{i})\ge C_{0}\text{ for }k=1,2.\label{eq:*}
\end{equation}
 \emph{Step 1}. $v_{i}(y_{1}^{i}),v_{i}(y_{2}^{i})\rightarrow\infty$.

We proceed by contradiction. We first suppose that $v_{i}(y_{2}^{i})$
is bounded while $v_{i}(y_{1}^{i})\rightarrow\infty$. By equation
(\ref{eq:Claim3-2}) of Proposition \ref{prop:4.2} we have that $y_{1}^{i}=0$
is an isolated simple blow up point. Then, by Proposition \ref{prop:isolato->semplice},
is also isolated simple. Since $v_{i}(y_{2}^{i})$ is uniformly bounded
by assumption, by an Harnack type inequality (\cite[Prop. 9.3]{Al})
we have that $v_{i}$ is uniformly bounded in a neighborhood of $y_{2}^{i}$.
Then $y_{2}^{i}$ is a regular point, and since $y_{1}^{i}$ is an
isolated simple blow up point, by Proposition \ref{prop:4.3}, Claim
1, we obtain that $v_{i}(y_{2}^{i})\rightarrow0$. This contradicts
equation (\ref{eq:*}). If we switch the role of $y_{1}^{k}$ and
$y_{2}^{k}$ the contradiction follows analogously, so we have only
to rule out the case in which both $v_{i}(y_{1}^{i}),v_{i}(y_{2}^{i})$
remain bounded. In this case we can prove that $v_{i}$ converge in
$C_{\text{loc}}^{2}(\mathbb{R}_{+}^{n})$ to $v$, a solution of 
\[
\left\{ \begin{array}{cc}
\Delta v=0 & \text{ in }\mathbb{R}_{+}^{n}\\
\frac{\partial v}{\partial\nu}+(n-2)f^{p_{0}-\frac{n}{n-2}}u^{p_{0}}=0 & \text{ on }\partial\mathbb{R}_{+}^{n}
\end{array}\right.
\]
where $p_{0}=\lim_{i}p_{i}$. Then, by Liouville theorem, we get $v\equiv0$,
which again contradicts (\ref{eq:*}), and Step 1 is proved. 

\noindent \emph{Step 2}. Conclusion. 

By Step 1 we have that both $y_{1}^{i}$ and $y_{2}^{i}$ are isolated
blow up points, and thus isolated simple blow up points for $v_{i}$.
At this point we proceed as in Proposition \ref{prop:isolato->semplice}
and we have that $v_{i}(0)v_{i}(x)\rightarrow G(y)=a_{1}|y|^{2-n}+a_{2}|y-y_{2}|^{2-n}+b(y)$
in $C_{\text{loc}}^{2}(\mathbb{R}_{+}^{n}\smallsetminus\left\{ 0,y_{2}\right\} )$,
where $y_{2}=\lim_{i}y_{2}^{i}$ , $b(y)$ is an harmonic function
on $\mathbb{R}_{+}^{n}\smallsetminus\left\{ 0,y_{2}\right\} $ with
Neumann boundary condition and $a_{1},a_{2}>0$. 

By the maximum principle $b(y)\ge0$, so near $0$ we have 
\begin{equation}
v_{i}(0)v_{i}(x)=a_{1}|y|^{2-n}+b+O(|y|)\label{eq:splittingfinale}
\end{equation}
for some $b>0$. As in Proposition, \ref{prop:isolato->semplice}
equation (\ref{eq:splittingfinale}) contradicts the sign condition
given by the Pohozaev inequality, since we supposed $W(x)\neq0$ if
$n>8$ or $\bar{W}(x)\neq0$ if $n=8$. This concludes the proof.
\end{proof}
\begin{rem}\label{rem:Nbar}
Notice that, by the above proposition, there exists $\bar{N}$ such
that $N(u)\le\bar{N}<+\infty$ for all $u$.
\end{rem}

\section{\label{sec:Proof}Proof of the main result}
\begin{proof}[Proof of Theorem \ref{thm:main}]
By contradiction, suppose that $x_{i}\rightarrow x_{0}$ is a blow
up point for $u_{i}$ solutions of (\ref{eq:P-conf}). Let $q_{1}^{i},\dots q_{N(u_{i})}^{i}$
the sequence of points given by Proposition \ref{prop:4.2}, with $N(u_{i})\le\bar{N}$ by Remark \ref{rem:Nbar}. By Claim
3 of Proposition \ref{prop:4.2} there exists a sequence of indices
$k_{i}\in1,\dots N$ such that $d_{\bar{g}}\left(x_{i},q_{k_{i}}^{i}\right)\rightarrow0$.
Up to relabeling, we say $k_{i}=1$ for all $i$. Then also $q_{1}^{i}\rightarrow x_{0}$
is a blow up point for $u_{i}$. By Proposition \ref{prop:splitting} and
Proposition \ref{prop:4.2}  we have that $q_{1}^{i}\rightarrow x_{0}$ is an isolated simple blow
up point for $u_{i}$, and by Proposition \ref{prop:isolato->semplice}
we have that $q_{1}^{i}\rightarrow x_{0}$ is also isolated simple. 
Finally by Proposition \ref{prop:7.1} we deduce
that $\bar{W}(x_{0})=0$ if $n=8$ or that $W(x_{0})=0$ if $n>8$,
which contradicts the assumption of this theorem and proves the result. 
\end{proof}

\section{Appendix}
\begin{proof}[Proof of Lemma \ref{lem:vq} ]
We follow the strategy of \cite[Prop 5.1]{Al}. To prove the existence
of a solution of (\ref{eq:vqdef}) we have to show that the given
term $\left[\frac{1}{3}\bar{R}_{ijkl}(q)z_{k}z_{l}+R_{ninj}(q)t^{2}\right]\partial_{ij}^{2}U$
is $L^{2}$-orthogonal to the functions $j_{1},\dots,j_{n}$. For
$l=1,\dots,n-1$ we have 
\begin{multline*}
\int_{\mathbb{R}_{+}^{n}}\left[\frac{1}{3}\bar{R}_{ijkl}(q)z_{k}z_{l}+R_{ninj}(q)t^{2}\right]\partial_{ij}^{2}Uj_{b}\\
=\int_{\mathbb{R}_{+}^{n}}\left[\frac{1}{3}\bar{R}_{ijkl}(q)z_{k}z_{l}+R_{ninj}(q)t^{2}\right]\partial_{ij}^{2}U\partial_{l}Udzdt=0
\end{multline*}
by symmetry, since the integrand is odd with respect to the $z$ variables. 

For the last term, since when $i\neq j$ we have 
\[
\partial_{ij}U=\frac{n(n-2)z_{i}z_{j}}{\left((1+t)^{2}+|z|^{2}\right)^{\frac{n+2}{2}}}
\]
and since when $i=j$ we have $\bar{R}_{iikl}=0$ and, by (\ref{eq:Ricci}),
$R_{nini}=R_{nn}=0$, we have 
\begin{multline*}
\int_{\mathbb{R}_{+}^{n}}\left[\frac{1}{3}\bar{R}_{ijkl}(q)z_{k}z_{l}+R_{ninj}(q)t^{2}\right]\partial_{ij}^{2}UUdzdt\\
=\sum_{i\neq j}\sum_{k}\int_{\mathbb{R}_{+}^{n}}\left[\frac{1}{3}\bar{R}_{ijkl}(q)z_{k}z_{l}+R_{ninj}(q)t^{2}\right]\frac{n(n-2)z_{i}z_{j}}{\left((1+t)^{2}+|z|^{2}\right)^{n}}
\end{multline*}
and since $i\neq j$, by symmetry all the terms containing $t^{2}z_{i}z_{j}$
vanishes and the others terms are non zero only when $i=k$ and $j=l$
or when $j=k$ and $i=l$, thus 
\[
\int_{\mathbb{R}_{+}^{n}}\left[\frac{1}{3}\bar{R}_{ijkl}(q)z_{k}z_{l}+R_{ninj}(q)t^{2}\right]\partial_{ij}^{2}UUdzdt
\]
\[
=\sum_{k}\int_{\mathbb{R}_{+}^{n}}\left[\frac{1}{3}\bar{R}_{klkl}(q)+\frac{1}{3}\bar{R}_{lkkl}(q)\right]\frac{n(n-2)z_{k}^{2}z_{l}^{2}}{\left((1+t)^{2}+|z|^{2}\right)^{-n}}=0
\]
since $\bar{R}_{klkl}(q)=-\bar{R}_{lkkl}(q)$. Moreover
\begin{multline*}
\int_{\mathbb{R}_{+}^{n}}\left[\frac{1}{3}\bar{R}_{ijkl}(q)z_{k}z_{l}+R_{ninj}(q)t^{2}\right]\partial_{ij}^{2}Uy_{b}\partial_{b}Udtz\\
=n(2-n)\sum_{i\ne j}\sum_{k,s}\int_{\mathbb{R}_{+}^{n}}\left[\frac{1}{3}\bar{R}_{ijkl}(q)z_{k}z_{l}+R_{ninj}(q)t^{2}\right]\frac{z_{i}z_{j}\left(z_{s}z_{s}+t(1+t)\right)}{\left((1+t)^{2}+|z|^{2}\right)^{-n-1}}\\
=n(2-n)\sum_{k}\int_{\mathbb{R}_{+}^{n}}\left[\frac{1}{3}\bar{R}_{klkl}(q)+\frac{1}{3}\bar{R}_{lkkl}(q)\right]\frac{z_{k}^{2}z_{l}^{2}\left(\sum_{s}z_{s}z_{s}+t(1+t)\right)}{\left((1+t)^{2}+|z|^{2}\right)^{-n-1}}=0.
\end{multline*}
Then there exists a solution. Also there exists a unique solution
$v_{q}$ which is $L^{2}$-orthogonal to $j_{b}$ for $b=1,\cdots,n$. 

To prove the estimates (\ref{eq:Uvq}) and (\ref{new}) we use the
inversion $F:\mathbb{R}_{+}^{n}\rightarrow B^{n}\smallsetminus\left\{ (0,\dots,0-1)\right\} $,
where $B^{n}\subset\mathbb{R}^{n}$ is the closed ball centered in
$(0,\dots,0,-1/2)$ and radius $1/2$. The explicit expression for
$F$ is 
\[
F(y_{1},\dots,y_{n})=\frac{(y_{1},\dots,y_{n-1},y_{n}+1)}{y_{1}^{2}+\dots+y_{n-1}^{2}+(y_{n}+1)^{2}}+(0,\dots,0-1).
\]
We set
\[
f_{q}(F(y))=\left[\frac{1}{3}\bar{R}_{ijkl}(q)y_{k}y_{l}+R_{ninj}(q)y_{n}^{2}\right]\partial_{ij}^{2}U(y)U^{-\frac{n+2}{n-2}}(y).
\]
By direct computation we have $|f_{i}(F(y))|\le C(1+|y|)^{4}$, so
we have 
\begin{equation}
|f_{q}(\xi)|\le C\left(1+\frac{1}{|\xi|}\right)^{4}\le C\frac{1}{\left(1+|\xi|\right)^{4}}\label{eq:stimafi}
\end{equation}
So it is possible to smoothly extend $f_{q}$ to the whole $B^{n}$,
and it turns out that if $\gamma_{q}$ solves (\ref{eq:vqdef}), then
\textbf{$\bar{\gamma}_{q}:=(U^{-1}\gamma_{q})\circ F^{-1}$ }solves
\begin{equation}
\left\{ \begin{array}{ccc}
-\Delta\bar{\gamma}=f_{q} &  & \text{on }B^{n}\\
\frac{\partial\bar{\gamma}}{\partial y_{n}}+2\bar{\gamma}=0 &  & \text{on }\partial B^{n}
\end{array}\right..\label{eq:vbardef}
\end{equation}
Then existence and uniqueness of $\bar{\gamma}_{q}$ are standard.
To prove the decadence estimates, fixed $w\in B^{n}$, consider the
Green's function $G(\xi,w)$ with boundary condition $\left(\frac{\partial}{\partial\nu}+2\right)G=0$.
Then by Green's formula and by (\ref{eq:vbardef}) we have 
\[
\bar{\gamma}_{q}(\xi)=\int_{B^{n}}G(\xi,w)\Delta\bar{\gamma}_{q}(\xi)+\int_{\partial B^{n}}\bar{\gamma}_{q}\frac{\partial}{\partial\nu}G-G\frac{\partial}{\partial\nu}\bar{\gamma}_{q}=-\int_{B^{n}}G(\xi,w)f_{q}(\xi)
\]
 and, in light of (\ref{eq:stimafi}) we have 
\[
|\bar{\gamma}_{q}(\xi)|\le C\int_{B^{n}}|\xi-w|^{2-n}\left(1+|\xi|\right)^{-4}
\]
and by (\ref{eq:ALstimagreen}), since $n\ge5$ we get that $|\bar{\gamma}_{q}(\xi)|\le C\left(1+|\xi|\right)^{-2}$,
and by the definition of $\bar{\gamma}_{q}$ we deduce 
\[
|\gamma_{q}(y)|\le C\left(1+|y|\right)^{4-n}.
\]
The estimates on the first and the second derivatives of $\gamma_{q}$
can be achieved in a similar way. 

To prove (\ref{eq:Uvq}) and (\ref{new}) notice that, changing of
variables and proceeding as at the beginning of this proof, we have
\[
\int_{B^{n}}f_{q}(\xi)d\xi=\int_{\mathbb{R}_{+}^{n}}\frac{\left[\frac{1}{3}\bar{R}_{ijkl}(q)y_{k}y_{l}+R_{ninj}(q)y_{n}^{2}\right]\partial_{ij}^{2}U(y)U^{-\frac{n+2}{n-2}}(y)}{(y_{1}^{2}+\dots+y_{n-1}^{2}+(y_{n}+1)^{2})^{n}}dy=0.
\]
So we have, using (\ref{eq:vbardef}) and integrating by parts, that
\begin{equation}
0=\int_{B^{n}}f_{q}=-\int_{B^{n}}\Delta\bar{\gamma}_{q}=-\int_{\partial B^{n}}\frac{\partial}{\partial\nu}\bar{\gamma}_{q}=-\int_{\partial B^{n}}2\bar{\gamma}_{q}\label{eq:fq0}
\end{equation}
and, changing variables again, 
\begin{multline*}
0=\int_{\partial B^{n}}2\bar{\gamma}_{q}(\xi)d\xi_{1}\dots d\xi_{n-1}=\int_{\partial\mathbb{R}_{+}^{n}}U^{-1}(y)\gamma_{q}(y)U^{\frac{2(n-1)}{n-2}}(y)dy_{1}\dots dy_{n-1}\\
=\int_{\partial\mathbb{R}_{+}^{n}}U^{\frac{n}{n-2}}(y)\gamma_{q}(y)dy_{1}\dots dy_{n-1}.
\end{multline*}
It is known (see \cite{Al}), that on $H^{1}(B^{n})$ it holds 
\[
\inf_{\int_{\partial B_{n}}\phi=0}\frac{\int_{B^{n}}|\nabla\phi|^{2}}{\int_{\partial B^{n}}|\phi|^{2}}=2.
\]
Since, by (\ref{eq:fq0}), we know that $\int_{\partial B^{n}}\bar{\gamma}_{q}=0$,
we get
\[
2\int_{\partial B^{n}}\bar{\gamma}_{q}^{2}\le\int_{B^{n}}|\nabla\bar{\gamma}_{q}|^{2},
\]
so, integrating by parts
\[
-\int_{B^{n}}\bar{\gamma}_{q}\Delta\bar{\gamma}_{q}=\int_{B^{n}}|\nabla\bar{\gamma}_{q}|^{2}-2\int_{\partial B^{n}}\bar{\gamma}_{q}^{2}\ge0.
\]
By the properties of the inversion $F$ (see \cite[formula (5.10)]{Al})
we have also 
\[
-\int_{B^{n}}\bar{\gamma}_{q}\Delta\bar{\gamma}_{q}=-\int_{\mathbb{R}_{+}^{n}}\gamma_{q}\Delta\gamma_{q}.
\]
For claim (\ref{eq:dervq}) we refer to \cite[Proposition 5.1]{Al}.

To prove that $\gamma_{q}\in C^{2}(\partial M)$, we fix $q_{0}\in\partial M$.
If $q\in\partial M$ is sufficiently close to $q_{0}$, in Fermi coordinates
we have $q=q(\eta)=\exp_{q_{0}}\eta$, with $\eta\in\mathbb{R}^{n-1}$.
So $\gamma_{q}=\gamma_{\exp_{q_{0}}\eta}$ and we define 
\[
\Gamma_{i}=\left.\frac{\partial}{\partial y_{i}}\gamma_{\exp_{q_{0}}\eta}\right|_{\eta=0}.
\]
We prove the result for $\Gamma_{1}$, being the other cases completely
analogous. By (\ref{eq:vqdef}) we have that $\Gamma_{1}$ solves
\[
\left\{ \begin{array}{ccc}
-\Delta\Gamma_{1}=\left[\frac{1}{3}\left.\frac{\partial}{\partial\eta_{1}}\left(\bar{R}_{ijkl}(q(y))\right)\right|_{y=0}y_{k}y_{l}+\left.\frac{\partial}{\partial\eta_{1}}\left(\bar{R}_{ninj}(q(y))\right)\right|_{y=0}\right]\partial_{ij}^{2}U &  & \text{on }\mathbb{R}_{+}^{n};\\
\frac{\partial\Gamma_{1}}{\partial t}+nU^{\frac{2}{n-2}}\Gamma_{1}=0 &  & \text{on \ensuremath{\partial}}\mathbb{R}_{+}^{n}.
\end{array}\right.
\]
and, since $\frac{\partial R_{nn}}{\partial\eta_{i}}(q)=0$ (see \cite[Prop 3.2 (4)]{M1}),
we can proceed as at the beginning of this proof to show that $\Gamma_{1}$
exists. Analogously we get the claim for the second derivative. 
\end{proof}
\begin{rem}
\label{rem:Iam}We collect here some result contained in \cite[Lemma 9.4]{Al}
and in \cite[Lemma 9.5]{Al}. The proof is by direct computation.
For $m>k+1$
\begin{align}
\int_{0}^{\infty}\frac{t^{k}dt}{(1+t)^{m}} & =\frac{k!}{(m-1)(m-2)\cdots(m-1-k)}\label{eq:t-integrali}\\
\int_{0}^{\infty}\frac{dt}{(1+t)^{m}} & =\frac{1}{m-1}\nonumber 
\end{align}
Moreover, set, for $\alpha,m\in\mathbb{N}$, 
\[
I_{m}^{\alpha}:=\int_{0}^{\infty}\frac{s^{\alpha}ds}{\left(1+s^{2}\right)^{m}}
\]
it holds
\begin{align}
I_{m}^{\alpha}=\frac{2m}{\alpha+1}I_{m+1}^{\alpha+2} & \text{ for }\alpha+1<2m\label{eq:Iam}\\
I_{m}^{\alpha}=\frac{2m}{2m-\alpha-1}I_{m+1}^{\alpha} & \text{ for }\alpha+1<2m\nonumber \\
I_{m}^{\alpha}=\frac{2m-\alpha-3}{\alpha+1}I_{m}^{\alpha+2} & \text{ for }\alpha+3<2m.\nonumber 
\end{align}
\end{rem}
\begin{lem}
\label{lem:integraligamma}We have 
\begin{align*}
\int_{\mathbb{R}_{+}^{n}}L_{1}(y)|\bar{y}|^{4}y_{n}^{2}dy= & \omega_{n-2}\frac{n+1}{n}\frac{12}{(n-3)(n-4)(n-5)(n-6)}I_{n}^{n};\\
\int_{\mathbb{R}_{+}^{n}}L_{1}(y)|\bar{y}|^{2}y_{n}^{4}dy= & \omega_{n-2}\frac{144}{n(n-2)(n-3)(n-4)(n-5)(n-6)}I_{n}^{n};\\
\int_{\mathbb{R}_{+}^{n}}L_{2}(y)|\bar{y}|^{2}y_{n}^{2}dy= & \omega_{n-2}\frac{20}{(n-3)(n-4)(n-5)(n-6)}I_{n}^{n};\\
\int_{\mathbb{R}_{+}^{n}}L_{2}(y)y_{n}^{4}dy= & \omega_{n-2}\frac{240}{(n-1)(n-2)(n-3)(n-4)(n-5)(n-6)}I_{n}^{n};\\
\int_{\mathbb{R}_{+}^{n}}L_{3}(y)|\bar{y}|^{2}dy= & \omega_{n-2}\frac{16(n-1)}{(n-3)(n-5)(n-6)}I_{n}^{n};\\
\int_{\mathbb{R}_{+}^{n}}L_{3}(y)y_{n}^{2}dy= & \omega_{n-2}\frac{32}{(n-3)(n-4)(n-5)(n-6)}I_{n}^{n}.
\end{align*}
\end{lem}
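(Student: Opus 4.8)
The plan is to reduce each of the six integrals, by elementary substitutions, to the one-dimensional integrals $I_m^\alpha=\int_0^\infty s^\alpha(1+s^2)^{-m}\,ds$ of Remark \ref{rem:Iam}, and then to normalize everything against the single value $I_n^n$ using the recursions (\ref{eq:Iam}). First I would pass to polar coordinates in the tangential variable, writing $\bar y=r\vartheta$ with $r=|\bar y|\ge 0$, $\vartheta\in S^{n-2}$, and $y_n=t\ge 0$; since every integrand depends on $\bar y$ only through $|\bar y|$, the angular integration contributes the constant $\omega_{n-2}$ and leaves double integrals over $(0,\infty)^2$ of the shape
\[
\int_0^\infty\!\!\int_0^\infty \frac{(r^2+t^2-1)\,r^{a}\,t^{b}}{\bigl[(1+t)^2+r^2\bigr]^{m}}\,dr\,dt ,
\]
with $(a,b,m)$ running through $(n+2,2,n+1)$, $(n,4,n+1)$, $(n,2,n)$, $(n-2,4,n)$, $(n,0,n-1)$, $(n-2,2,n-1)$ for the six integrals in the stated order; here the exponent of $r$ exceeds that of $|\bar y|$ by $n-2$ from the volume element, and $|y|^2-1=r^2+t^2-1$.

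Next I would use the identity $r^2+t^2-1=\bigl[(1+t)^2+r^2\bigr]-2(1+t)$ to split each integrand into two pieces: one with the denominator exponent lowered to $m-1$, and one carrying an extra factor $2(1+t)$ with denominator exponent $m$ (and a minus sign). In each inner $r$-integral I then substitute $r=(1+t)s$, so that $(1+t)^2+r^2=(1+t)^2(1+s^2)$ and
\[
\int_0^\infty \frac{r^{a}\,dr}{\bigl[(1+t)^2+r^2\bigr]^{m}}=(1+t)^{a+1-2m}\,I_m^{a}.
\]
The remaining $t$-integral is then of the form $\int_0^\infty t^{b}(1+t)^{-M}\,dt$ with $M$ an explicit integer, which is evaluated by (\ref{eq:t-integrali}); one checks $M>b+1$ in every instance (this is where the lower bound on $n$ is used), so no integral diverges. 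At this stage each of the six quantities is a short rational combination of $\omega_{n-2}$ times $I_{m-1}^{a}$ and $I_m^{a}$ for the relevant $(a,m)$.

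Finally I would bring all the $I_m^\alpha$ that occur to the common normalization $I_n^n$ by means of the three recursions in (\ref{eq:Iam}) — for instance $I_{n-1}^{n-2}=\frac{2(n-1)}{n-1}I_n^n=2I_n^n$ from the first recursion and $I_{n-2}^{n-2}=\frac{2(n-2)}{n-3}I_{n-1}^{n-2}$ from the second, the hypotheses $\alpha+1<2m$ etc.\ being satisfied since $n$ is large — and then collect terms and simplify to obtain the six closed forms. There is no conceptual difficulty here; the main work, which I would carry out carefully integral by integral, is the bookkeeping of the combinatorial factors, since the splitting doubles the number of terms and the stated forms emerge only after cancellations such as $\frac{n-2}{n-6}-1=\frac{4}{n-6}$ in the sixth integral. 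The only genuine risk is an arithmetic slip, so I would verify each of the six identities independently.
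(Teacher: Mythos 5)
Your proposal is correct and follows essentially the same route as the paper's (very terse) proof: polar coordinates in $\mathbb{R}^{n-1}$, the substitution $s=r/(1+y_n)$, and the reductions of Remark \ref{rem:Iam}. The splitting $r^2+t^2-1=[(1+t)^2+r^2]-2(1+t)$ you insert is just the natural way to carry out the step the paper leaves implicit, and your worked check of the sixth integral (with the cancellation $\tfrac{n-2}{n-6}-1=\tfrac{4}{n-6}$) confirms the bookkeeping is right.
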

\begin{proof}
The proof can be obtained  performing firstly a change in polar coordinates
in $\mathbb{R}^{n-1}$, then the change $s=r/(y_{n}+1)$ and using
Remark \ref{rem:Iam}. We recall that $\omega_{n-2}$ is the $n-1$
dimensional spherical element.
\end{proof}


\begin{thebibliography}{10}
\bibitem{Al}S. Almaraz, \emph{A compactness theorem for scalar-flat
metrics on manifolds with boundary}, Calc. Var. \textbf{41} (2011)
341-386.

\bibitem{A2}S. Almaraz, \emph{Blow-up phenomena for scalar-flat metrics
on manifolds with boundary}, J. Differential Equations \textbf{251}
(2011), no. 7, 1813-1840.

\bibitem{A3}S. Almaraz, \emph{An existence theorem of conformal scalar-flat
metrics on manifolds with boundary}, Pacific J. Math. \textbf{248}
(2010), 1-22.

\bibitem{ALM}A. Ambrosetti, Y.Y. Li, A. Malchiodi, \emph{On the Yamabe
problem and the scalar curvature problems under boundary conditions}.
Math. Ann. \textbf{322} (2002), 667-699. 

\bibitem{Au1}T. Aubin, \emph{Equations differentielles non lineaires
et probleme de Yamabe concernant la courbure scalaire}, J. Math. Pures
Appl. \textbf{55} (1976), 269-296.

\bibitem{Au}T. Aubin, Some Nonlinear Problems in Riemannian Geometry.
Springer Monographs in Mathematics. Springer, Berlin (1998).

\bibitem{B}S. Brendle, \emph{Convergence of the Yamabe flow in dimension
6 and higher}, Invent. Math. \textbf{170} (2007), 541- 576. 

\bibitem{ch}S. S. Chen, \emph{Conformal deformation to scalar flat
metrics with constant mean curvature on the boundary in higher dimensions},
arxiv preprint https://arxiv.org/abs/0912.1302 (2010).

\bibitem{DK}M. Disconzi, M. Khuri, \emph{Compactness and non-compactness
for the Yamabe problem on manifolds with boundary}, J. Reine Angew.
Math. \textbf{724} (2017), 145-201. 

\bibitem{FA}V. Felli, M. Ould Ahmedou, \emph{Compactness results
in conformal deformations of Riemannian metrics on manifolds with
boundaries}, Math. Z. \textbf{244} (2003), 175-210.

\bibitem{GMP}M.G. Ghimenti, A.M. Micheletti, A. Pistoia, \emph{Blow-up
phenomena for linearly perturbed Yamabe problem on manifolds with
umbilic boundary, }J. Differential Equations, in press, arXiv:1804.05559.

\bibitem{GMP2}M. Ghimenti, A.M. Micheletti, A. Pistoia, \emph{Linear
perturbation of the Yamabe problem on manifolds with boundary}, J.
Geom. Anal. \textbf{28} (2018), 1315-1340. 

\bibitem{Gi}G. Giraud, \emph{Sur la problème de Dirichlet généralisé}.
Ann. Sci. Ècole Norm. Sup. \textbf{46}, (1929) 131-145.

\bibitem{HL}Z.C. Han, Y. Li, \emph{The Yamabe problem on manifolds
with boundary: existence and compactness results}. Duke Math, J. \textbf{99}
(1999), 489-542.

\bibitem{HV} E. Hebey, M. Vaugon, \emph{Le probleme de Yamabe equivariant},
Bull. Sci. Math. \textbf{117} (1993), 241-286 

\bibitem{KMS} M. Khuri, F. Marques, R. Schoen, \emph{A compactness
theorem for the Yamabe problem}. J. Differ. Geom. \textbf{81} (2009),
143-196 

\bibitem{Es}J. Escobar, \emph{Conformal deformation of a Riemannian
metric to a scalar flat metric with constant mean curvature on the
boundary}, Ann. Math. \textbf{136}, (1992), 1-50.

\bibitem{Es2}J. Escobar, \emph{Sharp constant in a Sobolev trace
inequality}, Indiana Univ. Math. J. \textbf{37}, (1988), 687-698.

\bibitem{LZ} Y.Y. Li, M. Zhu, \emph{Yamabe type equations on three
dimensional Riemannian manifolds}. Commun. Contemp. Math. \textbf{1}
(1999), 1-50.

\bibitem{MN}M. Mayer, C.B. Ndiaye, \emph{Barycenter technique and
the Riemann mapping problem of Cherrier-Escobar}. J. Differential
Geom. \textbf{107} (2017), no. 3, 519-560. 

\bibitem{M1}F. Marques, \emph{Existence results for the Yamabe problem
on manifolds with boundary}, Indiana Univ. Math. J. \textbf{54} (2005)
1599-1620. 

\bibitem{M3}F. Marques, \emph{A priori estimates for the Yamabe problem
in the non-locally conformally flat case}, J. Differ. Geom. \textbf{71}
(2005) 315-346. 

\bibitem{M2}F. Marques, \emph{Compactness and non compactness for
Yamabe-type problems}, Progress in Nonlinear Differential Equation
and Their Applications, \textbf{86} (2017) 121-131.

\bibitem{SZ}R. Schoen, D. Zhang, \emph{Prescribed scalar curvature
on the n-sphere}, Calc. Var. Partial Differ. Equ. \textbf{4} (1996),
1-25.

\bibitem{S}R. Schoen, \emph{Conformal deformation of a Riemannian
metric to constant scalar curvature}, J. Differ. Geom. \textbf{20}
(1984), 479-495.

\bibitem{T}N. Trudinger, \emph{Remarks concerning the conformal deformation
of Riemannian structures on compact manifolds}, Annali Scuola Norm.
Sup. Pisa \textbf{22} (1968), 265-274.

\bibitem{Y}H. Yamabe, \emph{On a deformation of Riemannian structures
on compact manifolds}, Osaka Math. J. \textbf{12} (1960), 21-37.
\end{thebibliography}
\end{document}